\newbox\tr@tto
\def\medint{\displaystyle\copy\tr@tto\kern-10.4pt\int}
\def\Xint#1{\mathchoice
   {\XXint\displaystyle\textstyle{#1}}%
   {\XXint\textstyle\scriptstyle{#1}}%
   {\XXint\scriptstyle\scriptscriptstyle{#1}}%
   {\XXint\scriptscriptstyle\scriptscriptstyle{#1}}%
   \!\int}
\def\XXint#1#2#3{{\setbox0=\hbox{$#1{#2#3}{\int}$}
     \vcenter{\hbox{$#2#3$}}\kern-.5\wd0}}
\def\dashint{\Xint-}
\newcommand{\R}{{\mathbb R}}
\newcommand{\tF}{{\widetilde F}}
\newcommand{\cF}{{\mathcal F}}
\newcommand{\ttF}{{\widetilde G}}
\newcommand{\N}{{\mathbb N}}
\newcommand{\Z}{{\mathbb Z}}
\newcommand{\e}{{\varepsilon}}
\newcommand{\al}{{\alpha}}
\newcommand{\Le}{{\mathscr L}}
\newcommand{\E}{\mathop{\rm Ent}}
\newcommand{\M}{{\mathcal  M}}
\newcommand{\Be}{{\mathcal  B}}
\newcommand{\Me}{{\mathscr  M}}
\renewcommand{\H}{{\mathcal H}}
\newcommand{\rank}{{\mathrm{rank\,}}}
\newcommand{\bg}{{\bar g}}
\newcommand{\p}{{p_\circ}}
\renewcommand{\b}{{q_\circ}}
\newcommand{\loc}{{\rm loc}}
\newcommand{\diam}{\mathop{\rm diam}}
\newcommand{\LL}{\mathrm{L}}
\newcommand{\WW}{\mathrm{W}}
\newcommand{\CC}{\mathrm{C}}
\newtheorem{ttt}{Theorem}[section]
\newtheorem{lem}{Lemma}[section]
\newtheorem{cor}{Corollary}[section]
\theoremstyle{remark}
\newtheorem{rem}{Remark}[section]
\numberwithin{equation}{section}
\newcommand{\dd}{{\rm d}}
\title{A~BRIDGE BETWEEN DUBOVITSKI\u{I}--FEDERER THEOREMS AND THE COAREA FORMULA}
\author{Piotr Haj\l{}asz, \, Mikhail V.~Korobkov,  \, and \, Jan Kristensen }
\begin{document}

\maketitle

\begin{abstract}
The Morse--Sard theorem requires that a mapping $v \colon
\R^n\to\R^m$ is of class $\CC^k$, $k > \max(n-m, 0)$. In 1957
Dubovitski\u{\i} generalized this result by proving that almost
all level sets for a $\CC^k$ mapping have $\H^s$-negligible
intersection with its critical set, where $s=\max(n-m-k+1,0)$.
Here the critical set, or $m$-critical set is defined as $Z_{v,m}
= \{ x \in \R^n : \, \rank \nabla v(x) < m \}$. Another
generalization was obtained independently by Dubovitski\u{\i} and
Federer in 1966, namely for $\CC^k$ mappings $v \colon \R^n \to
\R^d$ and integers $m \leq d$ they proved that the set of
$m$--critical values $v(Z_{v,m})$ is $\H^{\b}$-negligible for $\b
= m-1+\tfrac{n-m+1}{k}$. They also established the sharpness of
these results within the $\CC^k$ category.

Here we prove that Dubovitski\u{\i}'s theorem can be generalized
to the case of continuous mappings of the Sobolev--Lorentz class
$\WW^{k}_{p,1}(\R^n,\R^d )$, $p=\frac{n}k$ (this is the minimal
integrability assumption that guarantees the continuity of
mappings). In this situation the mappings need not to be
everywhere differentiable and in order to handle the set of
nondifferentiability points, we establish for such mappings an
analog of the Luzin $N$--property with respect to lower
dimensional Hausdorff content. Finally, we formulate and prove
a \textit{bridge theorem} that includes all the above results as
particular cases. This  result is new also for
smooth mappings, but is presented here in the general Sobolev
context. The proofs of the results are based on our previous joint papers
with J.~Bourgain (2013, 2015).
\medskip

 {\bf Note, that in this paper some result concerning the Coarea formula was not formulated accurately. Now we put an Addendum consisting of three parts: first, we describe the accurate formulation of this result, then we give some historical remarks, and finally its relation to other results of the paper.}

\noindent {\bf Key words:} {\it Sobolev--Lorentz mappings, Luzin
$N$--property, Morse--Sard theorem, Dubovitski\u{\i} theorems,
Dubovitski\u{\i}--Federer theorem, Coarea formula}
\end{abstract}

%\tableofcontents

\section{Introduction}\label{Introd}
The Morse--Sard theorem in its classical form states that the
image of the set of critical points of a $\CC^{n-m+1}$ smooth
mapping $v\colon \R^n \to \R^m$ has zero Lebesgue measure in
$\R^m$. More precisely, assuming that $n \geq m$, the set of
critical points for $v$ is $Z_v=\{x\in\R^n\,:\,\rank \nabla
v(x)<m\}$ and the conclusion is that
\begin{equation}\label{classical}
\Le^m(v(Z_v))=0.
\end{equation}
The theorem was proved by Morse~\cite{Mo} in the case $m=1$ and
subsequently by Sard~\cite{S} in the general vector--valued case.
The celebrated results of Whitney~\cite{Wh} show that the
$\CC^{n-m+1}$ smoothness assumption on the mapping $v$ is sharp.
However, the following result gives valuable information also for
less smooth mappings. \vspace{2mm}

\noindent {\bf Theorem A
(Dubovitski\u{\i}~1957~\cite{Du})}.\label{AA} {\sl Let $n,m,k \in
\N$, and let
 $v \colon \R^n\to\R^m$ be a
$\CC^k$--smooth mapping. Put $s=n-m-k+1$. Then
\begin{equation}\label{dub1}
\H^s (Z_{v}\cap v^{-1}(y))=0\qquad\mbox{ for a.a. }y\in\R^m,
\end{equation}
where  $\H^s$ denotes the $s$--dimensional  Hausdorff measure and
$Z_{v}$ is the set of critical points of~$v$.} \vspace{2mm}

Here and in the following we interpret $\H^\beta$ as the counting
measure when $\beta\le0$. Thus for $k\ge n-m+1$ we have $s \le0$,
and $\H^s$ in (\ref{dub1}) becomes simply the counting measure, so
the Dubovitski\u{\i} theorem contains the Morse--Sard theorem as
particular case\footnote{It is interesting to note that because of
the isolation of the former Soviet Union this first
Dubovitski\u{\i} theorem was almost unknown to Western
mathematicians; another proof was given in the recent
paper~\cite{BHS} covering also some extensions to the case of
H\"{o}lder spaces.}.

A few years later and almost simultaneously,
Dubovitski\u{\i}~\cite{Du2} in 1967 and Federer \cite[Theorem
3.4.3]{Fed} in 1969\footnote{Federer announced~\cite{Fed66} his
result in 1966, this announcement (without any proofs) was sent
on~08.02.1966. For the historical details, Dubovitski\u{\i} sent
his paper~\cite{Du2} (with complete proofs) a month earlier,
on~10.01.1966.} published another important generalization of the
Morse--Sard theorem. \vspace{2mm}

\noindent {\bf Theorem B (Dubovitski\u{\i}--Federer)}.\label{BB}
{\sl For $n,k,d \in \N$ let $m\in\{1,\dots,\min (n,d)\}$ and
 $v \colon \R^n\to\R^d$ be a
$\CC^k$--smooth mapping. Put $q_\circ=m+\frac{s}k$. Then
\begin{equation}\label{fed1}
\H^\b(v(Z_{v,m}))=0.
\end{equation}
where, as above, $s=n-m-k+1$  and $Z_{v,m}$ denotes the set of
$m$--critical points of $v$ defined as
$Z_{v,m}=\{x\in\R^n\,:\,\rank \nabla v(x)<m\}$.} \vspace{2mm}

In view of the wide range of applicability of the above results it
is a natural and compelling problem to decide to what extent they
admit extensions to classes of Sobolev mappings. The first
Morse--Sard result in the Sobolev context that we are aware of is
due to L.~De Pascale~\cite{DeP} (though see also \cite{La}). It
states that (\ref{classical}) holds for mappings $v$ of class
$\WW^{k}_{p,{\rm loc}}(\R^n , \R^m )$ when $k \geq \max (n-m+1,2)$
and $p>n$. Note that by the Sobolev embedding theorem any mapping
on $\R^n$ which is locally of Sobolev class $\WW^{k}_{p}$ for some
$p>n$ is in particular $\CC^{k-1}$, so the critical set $Z_v$ can
be defined as usual.

In the recent paper \cite{HZ} P.~Haj\l{}asz and S.~Zimmerman
proved Theorem~A under the assumption that $v\in \WW^{k}_{p,{\rm
loc}}(\R^n , \R^m )$, $p>n$, which corresponds to that used by
L.~De Pascale~\cite{DeP}.

In view of the existing counter--examples to Morse--Sard type
results in the classical $\CC^k$ context the issue {\it is not}
the value of $k$,~--- that is, how many weak derivatives are
needed. Instead the question is, what are the {\it minimal}
integrability assumptions on the weak derivatives for Morse--Sard
type results to be valid in the Sobolev case. Of course, it is
natural here to restrict attention to continuous mappings, and so
to~require from the considered function spaces that the inclusion
$v\in \WW^k_p(\R^n,\R^d)$ should guarantee at least the continuity
of~$v$ (assuming always that the mappings are precisely
represented). For values $k\in\{1,\dots,n-1\}$ it is well--known
that $v\in \WW^k_p(\R^n,\R^d)$ is continuous for~$p>\frac{n}k$ and
could be discontinuous for $p\le \frac{n}k$. So {\bf the
borderline case} is $p=\p=\frac{n}k$. It is well--known (see for
instance \cite{Maly1,KK3}) that $v\in \WW^k_\p(\R^n,\R^d)$ is
continuous if the derivatives of $k$-th order belong to the
Lorentz space~$\LL_{\p,1}$, we will denote the space of such
mappings by~$\WW^k_{\p,1}(\R^n,\R^d)$. We refer to
section~\ref{prel} for relevant definitions and notation.

In \cite{KK15} it was shown that mappings $v\in \WW^k_{\p
,1}(\R^n,\R^d)$ are differentiable (in the classical
Fr\'{e}chet--Peano sense) at each point outside
some~$\H^{\p}$--negligible set $A_v$ (see
Theorems~\ref{Th_dif}--\ref{Th_dif2}).\footnote{It was also proven
that each point $x\in \R^n\setminus A_v$ is an $\LL_{\p}$-Lebesgue
point for the weak gradient~$\nabla v$. Note that for mappings of
the classical Sobolev space~$\WW^{k}_{\p}(\R^n)$ the corresponding
exceptional set $U$ has small Bessel capacity
$\mathcal{B}_{k-m,p}(U)<\varepsilon$, and, respectively, the
gradients $\nabla^m v$ are well-defined in $\R^n$ except for some
exceptional set of zero Bessel capacity~$\mathcal{B}_{k-m,p}$
(see, e.g., Chapter~3 in \cite{Ziem} or \cite{BHS}\,).} Thus we
define for integers $m \leq \min \{ n,d \}$ the
\textbf{$m$--critical set} as
\begin{equation}\label{criset}
Z_{v,m}=\{x\in \R^n\setminus A_v:\rank\nabla v(x)<m\} .
\end{equation}
In previous joint papers of two of the authors with J.~Bourgain~\cite{BKK,BKK2} and
in~\cite{KK3,KK15} this definition of critical set was used and a
corresponding Dubovitski\u{\i}--Federer Theorem~B was established
for mappings of Sobolev class $\WW^k_\p(\R^n,\R^d)$. If, in
addition, the highest derivative $\nabla^kv$ belongs to the
Lorentz space $\LL_{\p,1}$ (in particular, if $k=n$ since
$\LL_{1,1}=\LL_1$), also the Luzin $N$--property with respect to
the $\p$--dimensional Hausdorff content was proven. It implies, in
particular, that the image of the set $A_v$ of
nondifferentiability points has zero measure, and consequently,
$\CC^1$-smoothness of almost all level sets follows.

In this paper we prove the Dubovitski\u{\i} Theorem~A for mappings
of the same Sobolev--Lorentz class $\WW^k_{\p,1}$ and with values
in $\R^d$ for arbitrary $d \geq m$.

\begin{ttt}\label{DST}{\sl
Let $k,m\in\{1,\dots,n\}$, \,$d\ge m$ \,and \,$v\in
\WW^{k}_{\p,1}(\R^n,\R^d)$. Then the equality
\begin{equation}\label{dub2}
\H^s (Z_{v,m}\cap v^{-1}(y))=0\qquad\mbox{ for \ $\H^m$-a.a.
}y\in\R^d
\end{equation}
holds, where as above $s =n-m-k+1$ and $Z_{v,m}$ denotes the set
of $m$--critical points of~$v$:\ \ $Z_{v,m}=\{x\in\R^n\setminus
A_v\,:\,\rank \nabla v(x)<m\}$.}
\end{ttt}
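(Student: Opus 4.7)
The plan is to mirror, at the $\WW^{k}_{\p,1}$-level, the two-tier strategy that the authors and Bourgain used in \cite{BKK,BKK2,KK15} to derive the Dubovitski\u{\i}--Federer theorem for this same class: approximate $v$ by a smooth map on a large ``good'' set, apply a classical theorem to the smooth map there, and control the leftover ``bad'' set via a Luzin $N$-type estimate.

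First I would dispose of the degenerate range $s\le 0$, i.e.\ $k\ge n-m+1$. Here $\H^s$ is the counting measure, so the required conclusion reduces to $\H^m(v(Z_{v,m}))=0$. This is precisely Theorem~B in the Sobolev--Lorentz setting proved in~\cite{KK15}, applied with $\b=m+s/k\le m$, which makes $v(Z_{v,m})$ already $\H^\b$-negligible, hence $\H^m$-negligible. From now on assume $s>0$, equivalently $k\le n-m$.

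The core of the argument is a Whitney-type approximation developed in~\cite{BKK,BKK2,KK15}: for each $\varepsilon>0$ there exist a closed set $E_\varepsilon\subset\R^n$ and a $\CC^k$-smooth mapping $g_\varepsilon\colon\R^n\to\R^d$ satisfying $\nabla^j v=\nabla^j g_\varepsilon$ on $E_\varepsilon$ for $j=0,1,\dots,k$, while the complement $\R^n\setminus E_\varepsilon$ has arbitrarily small Hausdorff content in the appropriate dimension. Since derivatives agree on $E_\varepsilon$, the inclusion $Z_{v,m}\cap E_\varepsilon\subset Z_{g_\varepsilon,m}$ holds and $v=g_\varepsilon$ there; applying the classical Dubovitski\u{\i} Theorem~A to $g_\varepsilon$ one obtains
\begin{equation*}
\H^s\bigl(Z_{v,m}\cap E_\varepsilon\cap v^{-1}(y)\bigr)=\H^s\bigl(Z_{v,m}\cap E_\varepsilon\cap g_\varepsilon^{-1}(y)\bigr)=0 \quad\text{for $\H^m$-a.e.\ }y\in\R^d.
\end{equation*}
For the residue $Z_{v,m}\setminus E_\varepsilon$, I would invoke the Luzin $N$-property for $\WW^{k}_{\p,1}$-mappings in the form developed in~\cite{KK15} (measuring the image in $\H^m$ in terms of the Hausdorff content of the source): one gets $\H^m\bigl(v(Z_{v,m}\setminus E_\varepsilon)\bigr)<C\varepsilon$. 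Consequently, for every $y$ outside this small set, $Z_{v,m}\cap v^{-1}(y)\subset E_\varepsilon$, so the previous display yields $\H^s(Z_{v,m}\cap v^{-1}(y))=0$. Taking $\varepsilon=\tfrac1N$ and passing to the union of the exceptional $y$-sets as $N\to\infty$ finishes the proof.

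The main obstacle is the fine calibration between these two ingredients: the Whitney approximation must discard $\R^n\setminus E_\varepsilon$ in a Hausdorff dimension low enough for the Luzin $N$-property to send it to an $\H^m$-null image, yet high enough to force $v$ to coincide with its $\CC^k$-approximant up to order~$k$ on $E_\varepsilon$ despite the criticality of the scaling $p=n/k$. Matching these dimensions---one parameter serving both the Dubovitski\u{\i} level-set exponent~$s$ and the Dubovitski\u{\i}--Federer image exponent~$\b$---is precisely the \emph{bridge} advertised in the paper's title and constitutes the technical heart of the argument.
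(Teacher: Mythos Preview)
Your two-tier plan---approximate by a $\CC^k$ map on a large set, cite the classical theorem there, control the remainder by an $N$-property---has the right architecture, but both tiers have gaps as stated. For the smooth tier, Dubovitski\u{\i}'s Theorem~A is only available for targets $\R^m$, not $\R^d$ with $d>m$; the paper explicitly notes that this case is new even for $\CC^k$ mappings, and proves it from scratch (Lemma~\ref{Dub-smooth}) rather than by citation. A projection trick does not obviously reduce the $\R^d$-valued statement to the $\R^m$-valued one, because composing with $\pi\colon\R^d\to\R^m$ collapses distinct level sets $v^{-1}(y)$ into a single $(\pi\circ v)^{-1}(z)$.

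The more serious gap is the remainder. Your claim $\H^m\bigl(v(Z_{v,m}\setminus E_\varepsilon)\bigr)<C\varepsilon$ does not follow from the $N$-property of~\cite{KK15} (Theorem~\ref{th3.3} here), which only transfers small $\H^q$-content of the source to small $\H^q$-content of the image for the \emph{same} exponent $q\in[\p,n]$. The $\CC^k$-approximation (Theorem~\ref{Th_ap} with $l=k$) leaves a complement of small $\H^n_\infty$ (i.e.\ Lebesgue) content, or at best small $\H^\p_\infty$-content if you match only first derivatives; neither gives small $\H^m$-image when $m<\p$ (take $n=3$, $k=1$, $m=2$, so $\p=3>m$ and $s=1>0$). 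The paper does not attempt to bound the image of the bad set. Instead it introduces the coupled functional $\Phi(E)=\inf\sum_\alpha(\diam D_\alpha)^\mu[\diam v(D_\alpha)]^q$ and proves directly---via Yomdin's polynomial entropy estimate (Theorem~\ref{lb8}) combined with the maximal-function bound (Theorem~\ref{lb7})---that $\Phi(Z'_v\cap E)\to0$ as $\Le^n(E)\to0$ (Lemma~\ref{lb11} and Corollary~\ref{cor00}). This quantitative estimate is the ``bridge'' you allude to in your final paragraph, and it is what drives both the remainder \emph{and} the smooth case; there is no shortcut through a pure image bound.
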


To the best of our knowledge the result is new even when the
mapping $v \colon \R^n \to \R^d$ is of class $\CC^k$ since we
allow here $m<d$ (compare with Theorem A). However, the main
thrust of the result is the extension to the Sobolev--Lorentz
context that we believe is essentially sharp. In this context we
also wish to emphasize that the result is in harmony with our
definition of critical set (recall that $\H^\p(A_v)=0$\,) and the
following new analog of the Luzin $N$-property:

\begin{ttt}\label{LDST}{\sl
Let $k,m\in\{1,\dots,n\}$, \,$d\ge m$ \,and \,$v\in
\WW^{k}_{\p,1}(\R^n,\R^d )$. Then for any set $A$ with
$\H^\p(A)=0$ we have
\begin{equation}\label{dub3}
\H^s (A\cap v^{-1}(y))=0\qquad\mbox{ for \ $\H^m$-a.a. }y\in\R^d ,
\end{equation}
where again $s=n-m-k+1$. }
\end{ttt}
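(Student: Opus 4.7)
My plan is to combine a Vitali-type covering of $A$ with a generalized Eilenberg slicing inequality tailored to the Sobolev--Lorentz Taylor approximation of $v$. Since $A_v$ is $\H^\p$-null by \cite{KK15}, I may replace $A$ by $A\setminus A_v$ and assume that every $x\in A$ admits a Fr\'{e}chet--Peano Taylor polynomial $T_x=T_x^{k-1}v$ of degree $k-1$, together with a quantitative remainder estimate $|v(y)-T_x(y)|\le\mu(x,r)\,r^k$ valid for $|y-x|\le r$, where $\mu(x,r)$ is a fractional-maximal-type quantity built from $\nabla^k v$ whose scale-uniform integrability is driven by $\|\nabla^k v\|_{\LL_{\p,1}}$.

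Fix $\e>0$. Using $\H^\p(A)=0$ together with a Vitali-type selection and the absolute continuity of the Lorentz norm, I construct a countable cover $\{B(x_j,r_j)\}_j$ of $A$ with $x_j\in A$ and $\sum_j r_j^\p<\e$, arranged so that the companion weighted sum $\sum_j \mu_j^\p r_j^n$ (where $\mu_j:=\mu(x_j,2r_j)$) is bounded by an absolute multiple of $\|\nabla^k v\|_{\LL_{\p,1}}^\p$, independently of $\e$. On each ball one has $v=T_j+R_j$ with $\|R_j\|_{\LL_\infty(B(x_j,r_j))}\le C\mu_j r_j^k$, and therefore
\[
B(x_j,r_j)\cap v^{-1}(y)\;\subset\; B(x_j,r_j)\cap T_j^{-1}\bigl(\bar B(y,C\mu_j r_j^k)\bigr),
\]
which reduces the problem to slicing the smooth polynomial $T_j$.

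The heart of the argument will be an Eilenberg-type slicing estimate
\[
\int_{\R^d}\H^s_\infty\bigl(B(x_j,r_j)\cap T_j^{-1}(\bar B(y,\delta_j))\bigr)\,\dd\H^m(y)\;\le\; C\,\mu_j^m\,r_j^{s+km},
\]
which I plan to derive by combining the coarea inequality with the classical Dubovitski--Federer theorem applied to polynomials of degree $k-1$. Summing over $j$ and applying H\"{o}lder's inequality (exploiting that $s+km=n+(k-1)(m-1)\ge\p$ whenever $n\ge k$) yields $\int_{\R^d}\H^s_\infty(A\cap v^{-1}(y))\,\dd\H^m(y)\le C\,\e^{\alpha}$ for some $\alpha>0$ depending only on $n,m,k$; letting $\e\to 0$ gives $\H^s_\infty(A\cap v^{-1}(y))=0$ for $\H^m$-a.e.\ $y$, and since the cover radii may be taken arbitrarily small this upgrades from content to measure. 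The main obstacle is precisely the polynomial slicing inequality above: bare Eilenberg \emph{fails} here because the natural exponent $s+m=n-k+1$ is strictly less than $n$ when $k\ge 2$, so $\H^{s+m}(B_j)=\infty$; one must therefore exploit the smoothness of $T_j$ together with Dubovitski--Federer for polynomials to extract the extra factor $r_j^{(k-1)m}$ that makes the Sobolev--Lorentz scaling $\p=n/k$ close against the Taylor order $k-1$.
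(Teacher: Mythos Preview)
Your central slicing estimate
\[
\int_{\R^d}\H^s_\infty\bigl(B(x_j,r_j)\cap T_j^{-1}(\bar B(y,\delta_j))\bigr)\,\dd\H^m(y)\;\le\; C\,\mu_j^{m}\,r_j^{\,s+km}
\]
cannot hold with a constant $C=C(n,m,k,d)$. The left-hand side depends on the lower-order coefficients $\nabla v(x_j),\dots,\nabla^{k-1}v(x_j)$ of the Taylor polynomial $T_j$, and these are not controlled by your $\mu_j$, which is built solely from $\nabla^k v$. A clean counterexample: take $k\ge2$ and $v$ affine with $\rank\nabla v\ge m$. Then $\nabla^k v\equiv0$, hence $\mu_j=0$ and $\delta_j=0$, while the left-hand side equals $\int_{\R^d}\H^s_\infty(B_j\cap v^{-1}(y))\,\dd\H^m(y)>0$. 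Invoking Dubovitski\u{\i}--Federer for $T_j$ cannot rescue this, since that result concerns \emph{critical} values of $T_j$, whereas Theorem~\ref{LDST} carries no rank hypothesis on $\nabla v$; on the regular part of $T_j$ the size of the fibres is governed precisely by the Jacobians you have discarded. (A secondary issue: your reduction to $A\setminus A_v$ is circular, since $A_v$ is itself an $\H^{\p}$-null set for which your argument, requiring pointwise Taylor polynomials, does not apply.)

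The paper's proof avoids this by never isolating the $k$-th order remainder. One bounds $\diam v(Q)$ on each cube of a \emph{regular} dyadic cover by a sum of two terms, one involving $\|\nabla v\|_{\LL_p(Q)}$ and one involving $\|1_Q\cdot\nabla^{k}v\|_{\LL_{\p,1}}$ (Corollary~\ref{me1}), feeds this into the set function $\Phi(E)=\inf\sum_\alpha(\diam D_\alpha)^{\mu}[\diam v(D_\alpha)]^{q}$ (whose vanishing implies the slicing conclusion by Theorem~\ref{alaFalc}), and then uses the Adams-type maximal estimate (Theorem~\ref{lb7}, packaged as Lemma~\ref{Thh3.3}) to show that \emph{both} resulting sums are small whenever $\sum_\alpha\ell(Q_\alpha)^{p}$ is small. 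The first-order gradient contribution is therefore essential, and controlling its sum over the cover requires the regular-family/trace machinery, not merely the absolute continuity of $\|\nabla^{k}v\|_{\LL_{\p,1}}$. If you wish to salvage your outline you must include a term of the type $\bigl(\diam v(B_j)\bigr)^{m}$ on the right-hand side of your slicing inequality and then prove that the corresponding sum over $j$ tends to zero; doing this correctly essentially reproduces the paper's argument and in particular forces you through Lemma~\ref{Thh3.3}.
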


We end this section with remarks about the possibility to
localize our results.

\begin{rem} \label{Sobolev_r}
We have formulated the results in the context of mappings $v
\colon \R^n \to \R^d$ for mere convenience. However, the reader
can easily check that the essence of our results is at the local
level and so they also apply to mappings $v \colon N \to D$ that
are locally of class $\WW^{k}_{\p,1}$ between a second countable
$n$--dimensional smooth manifold $N$ and a $d$--dimensional smooth
manifold $D$.
\end{rem}

\begin{rem} \label{Sobolev_rr}
Since for an open set~$U\subset \R^n$ of finite measure the
estimate~$\|1_U\cdot f\|_{\LL_{\p,1}}\le C_U \|f\|_{\LL_p(U)}$
holds  for~$p>\p$ (see, e.g., \cite[Theorem~3.8]{Maly2}), and,
consequently,
$$W^k_p(U)\subset W^k_{\p,1}(U)\subset W^k_\p(U),$$
the results of the above theorems~\ref{DST}--\ref{LDST} are in
particular valid for mappings~$v$ that are locally of class
$\WW^k_{p}$ with $p>\p=\frac{n}k$.
\end{rem}

\section{A Bridge between the theorems of
Dubovitski\u{\i} and Federer} Originally, the purpose of the
present paper was very concrete: to extend the Dubovitski\u{\i}
Theorem~A to the Sobolev context (since the
Federer--Dubovitski\u{\i} Theorem~B had been extended before
in~\cite{KK3,KK15}, see Introduction and Subsection~\ref{MSR}).
But when our paper was finished and ready for submission, the very
natural question arose. Theorem~A asserts that $\H^m$-almost all
preimages are small (with respect to $\H^s$-measure), and
Theorem~B claims that $\H^\b$-almost all preimages are empty.
Could we connect these results? More precisely, could we say
something about $\H^q$--almost all preimages for other values
of~$q$, say, for $q\in[m-1,\b]$? The affirmative answer is
contained in the next theorem.

\begin{ttt}\label{DST-q}{\sl
$k,m\in\{1,\dots,n\}$, \,$d\ge m$ \,and \,$v\in
\WW^{k}_{\p,1}(\R^n,\R^d)$. Then for any $q\in (m-1,\infty)$ the
equality
\begin{equation}\label{dub2-q}
\H^{\mu_q}(Z_{v,m}\cap v^{-1}(y))=0\qquad\mbox{ for \ $\H^q$-a.a.
}y\in\R^d
\end{equation}
holds, where
\begin{equation}\label{mmu}\mu_q:=s+k(m-q),\qquad\ s=n-m-k+1,
\end{equation} and
$Z_{v,m}$ again denotes the set of $m$-critical points of~$v$:\ \
$Z_{v,m}=\{x\in\R^n\setminus A_v\,:\,\rank\nabla v(x)\le m-1\}$.}
\end{ttt}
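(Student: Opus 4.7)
\medskip\noindent\textbf{Proof proposal.} The result bridges Theorem~\ref{DST} (the case $q=m$, where $\mu_q=s$) and the Sobolev--Lorentz Dubovitski\u{\i}--Federer result from \cite{KK3,KK15} (the case $q=\b$, where $\mu_q=0$). The bookkeeping identity that drives the interpolation is
$$\mu_q+(m-1)+k(q-m+1)=n,$$
which pairs the fibrewise exponent $\mu_q$ with the ``$\H^q$-content of a thin slab'' exponent $(m-1)+k(q-m+1)$ so that their product is an $n$-dimensional volume. My plan is a covering argument combined with a Fubini/Tonelli integration against $\H^q$ on the target.

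\medskip\noindent\emph{Step~1 (stratified Taylor expansion and slab).} Using the pointwise Taylor-type differentiability of $\WW^k_{\p,1}$ mappings outside the $\H^\p$-null set $A_v$ established in \cite{BKK,BKK2,KK15}, I decompose $Z_{v,m}=\bigcup_\ell E_\ell$ into bounded pieces on which
$$|v(x)-P^k_{x_0}v(x)|\le\omega_\ell(|x-x_0|)\,|x-x_0|^k,\qquad x_0\in E_\ell,\ |x-x_0|\le r_\ell,$$
uniformly, with modulus $\omega_\ell(r)\downarrow 0$, and on which the classical Dubovitski\u{\i}--Morse--Sard stratification by transverse vanishing orders of the intermediate derivatives $\nabla^jv(x_0)$, $2\le j\le k-1$, holds uniformly. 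Each $x_0\in E_\ell$ then carries a well-defined tangent $(m-1)$-plane $T_{x_0}=v(x_0)+\mathrm{Im}(\nabla v(x_0))$, and for every $r<r_\ell$
$$v(B(x_0,r))\subset S_{x_0,r},$$
where $S_{x_0,r}$ is a slab with $(m-1)$ long directions of length $\le Cr$ aligned with $T_{x_0}$ and $(d-m+1)$ short directions of length $\le C\omega_\ell(r)^{\alpha}\,r^k$. An elementary covering-by-balls computation applied to $S_{x_0,r}$ then yields, for every $q>m-1$,
$$\H^q_\infty(S_{x_0,r})\le C\,\omega_\ell(r)^{\alpha(q-m+1)}\,r^{(m-1)+k(q-m+1)}.$$

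\medskip\noindent\emph{Step~2 (coarea-type integration).} For $\delta>0$, choose a Vitali/Besicovitch countable cover $\{B(x_i,r_i)\}$ of $E_\ell$ with $x_i\in E_\ell$, $r_i<\delta$, bounded multiplicity, and $\sum_i r_i^n\le C_\ell$. For every $y\in\R^d$ the set $Z_{v,m}\cap v^{-1}(y)\cap E_\ell$ is covered by those $B(x_i,r_i)$ with $y\in S_{x_i,r_i}$, so
$$\H^{\mu_q}_\infty(Z_{v,m}\cap v^{-1}(y)\cap E_\ell)\le\sum_{i:\,y\in S_{x_i,r_i}} r_i^{\mu_q}.$$
Integrating over $y$ against $\H^q$ and swapping sum and integral by Tonelli,
$$\int_{\R^d}\!\H^{\mu_q}_\infty(Z_{v,m}\cap v^{-1}(y)\cap E_\ell)\,\dd\H^q(y)\le C\sum_i r_i^{\mu_q}\H^q_\infty(S_{x_i,r_i})\le C\,\omega_\ell(\delta)^{\alpha(q-m+1)}\sum_i r_i^n,$$
by the exponent identity. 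Sending $\delta\to 0$ forces the left-hand side to vanish, hence $\H^{\mu_q}(Z_{v,m}\cap v^{-1}(y)\cap E_\ell)=0$ for $\H^q$-a.e.~$y$. Summing over $\ell$ yields~\eqref{dub2-q}, the $\H^\p$-null set $A_v$ being absorbed via the Luzin $N$-property of Theorem~\ref{LDST}.

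\medskip\noindent\emph{Main obstacle.} The decisive point is the sharp $r^k$ transverse thickness of the slab $S_{x_0,r}$ in Step~1. A plain Taylor remainder bound only gives $O(r^2)$ transverse size at a generic rank-drop point; obtaining the correct $r^k$ thickness requires a further stratification of $E_\ell$ according to the orders of vanishing of the intermediate derivatives $\nabla^jv(x_0)$, $1<j<k$, in the directions transverse to $\mathrm{Im}(\nabla v(x_0))$. In the Sobolev--Lorentz setting these intermediate derivatives exist only almost everywhere, so the stratification must be combined with the Luzin $N$-property of Theorem~\ref{LDST} to absorb the exceptional sets, which is the technical heart of the argument.
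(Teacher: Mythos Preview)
Your proposal has two genuine gaps, and the paper's argument is organised precisely to avoid both.

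\textbf{The slab bound in Step~1 does not hold.} At a point $x_0\in Z_{v,m}$ with $\rank\nabla v(x_0)=m-1$, the second-order term $\tfrac12\nabla^2 v(x_0)[x-x_0,x-x_0]$ generically has a nonzero component transverse to $\mathrm{Im}\,\nabla v(x_0)$, so $v(B(x_0,r))$ already escapes any slab of transverse thickness $o(r^2)$; the claimed thickness $\omega_\ell(r)^{\alpha}r^k$ is simply false without all intermediate derivatives $\nabla^2 v(x_0),\dots,\nabla^{k-1}v(x_0)$ vanishing in the transverse directions. You flag this under ``Main obstacle'', but the proposed stratification does not repair the argument: on the shallow strata one does \emph{not} get a thin slab, one gets instead an implicit-function-theorem reduction to a lower-dimensional problem, and carrying that inductive machine through in the Sobolev--Lorentz category is precisely what the paper avoids. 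Instead, on each dyadic cube $I$ the paper subtracts the degree-$(k-1)$ polynomial $P_I$, controls $|\nabla v_I|=|\nabla v-\nabla P_I|$ in level sets via the maximal function of $|\nabla^k v|^{\p}$ and the Choquet-type estimate of Theorem~\ref{lb7}, and then applies Yomdin's entropy bound (Theorem~\ref{lb8}) to the \emph{polynomial} $P_I$ on its near-critical set $\{\lambda_m(P_I,\cdot)\le\e\}$. Yomdin's theorem delivers the correct $\e^{1-m}$ count of $\e r$-balls without any slab geometry and without stratifying by vanishing orders; this is the content of Lemma~\ref{lb11}.

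\textbf{The Tonelli step in Step~2 is illegitimate.} You integrate $\H^{\mu_q}_\infty(\cdots)$ against $\dd\H^q(y)$ on $\R^d$, but for $q<d$ the measure $\H^q$ is not $\sigma$-finite: every slab $S_{x_i,r_i}$ with nonempty interior has $\H^q(S_{x_i,r_i})=+\infty$, so the right-hand side of your displayed inequality is infinite if you actually use $\H^q$, and Tonelli gives nothing. Your display silently replaces $\H^q$ by the content $\H^q_\infty$ on the right, which is not what any Fubini/Tonelli theorem yields. The paper handles this by encoding both exponents in the set function
\[
\Phi(E)=\inf\sum_\alpha(\diam D_\alpha)^{\mu_q}\bigl[\diam v(D_\alpha)\bigr]^q
\]
and proving separately (Theorem~\ref{alaFalc} in the Appendix) that $\Phi(E)=0$ implies $\H^{\mu_q}(E\cap v^{-1}(y))=0$ for $\H^q$-a.e.\ $y$. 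That proof manufactures a legitimate $\sigma$-finite reference measure by Frostman's lemma: one extracts a Borel $G$ inside the bad-$y$ set with $0<\H^q(G)<\infty$ and a density bound $\H^q(G\cap B(y,r))\le br^q$, and only then performs the countable-subadditivity/integration step against $\H^q\mres G$. With Lemma~\ref{lb11} and Theorem~\ref{alaFalc} in hand, the paper finishes via $\CC^k$-approximation (Theorem~\ref{Th_ap}) off a set of small Lebesgue measure and the absolute-continuity Corollary~\ref{cor00}.
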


Let us note, that the behavior of the function $\mu_q$ is very
natural:
\begin{eqnarray}
\mu_q &=&0 \mbox{ for } q=\b=m-1+\frac{n-m+1}k \quad \mbox{(Dubovitski\u{\i}--Federer Theorem~B)} \nonumber\\
\mu_q&<&0 \mbox{ for } q>\b \quad  \mbox{[ibid.]} \nonumber\\
\mu_q&=& s\mbox{ for } q=m \quad \mbox{(Dubovitski\u{\i} Theorem A)} \nonumber\\
\mu_q &=& n-m+1 \mbox{ for } q=m-1.  \label{dub3-q}
\end{eqnarray}
The last value cannot be improved in view of the trivial example
of a linear mapping $L\colon \R^n\to\R^d$ of rank $m-1$.

Thus, Theorem~\ref{DST-q} contains all the previous theorems
(Morse--Sard, A,\,B,\,\ref{DST} and \ref{MS}\,) as particular
cases and it is new even for the~smooth case.

We emphasize the fact that in stating Theorem~\ref{DST-q} we
skipped the borderline case $q=m-1$, $\mu_q=n-m+1$. Of course, for
this case we cannot assert that $\H^{m-1}$--almost all preimages
in the $m$--critical set~$Z_{v,m}$ have zero $\H^{n-m+1}$--measure
as the above mentioned counterexample with a linear mapping $L \colon
\R^n\to\R^d$ of rank $m-1$ shows. But for this borderline case we
obtain instead the following analog of the classical coarea
formula:

\begin{ttt}\label{D-Coarea}{\sl
Let $n,d\in\N$, \,$m\in\{0,\dots,\min(n,d)\}$, \ and \ $v\in
\WW^{1}_{n,1}(\R^n,\R^d)$. Then for any Lebesgue measurable subset
$E$ of $Z_{v,m+1}=\{x\in\R^n\setminus A_v:\rank \nabla v(x)\le
m\}$ we have
\begin{equation}\label{dub-coar1}
\int\limits_{E} \! J_{m}v(x)\,\dd
x=\int\limits_{\R^d}\H^{n-m}(E\cap v^{-1}(y))\, \dd \H^{m}(y) ,
\end{equation}
where $J_{m}v(x)$ denotes the $m$--Jacobian of $v$ defined as the
product of the $m$ largest singular values of the matrix $\nabla
v(x)$. }
\end{ttt}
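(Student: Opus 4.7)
The plan is to split $E$ into a regular part where $\rank \nabla v = m$ and a singular part where $\rank \nabla v < m$, to dispose of the singular part using Theorem~\ref{DST} (on which the $m$--Jacobian vanishes), and to reduce the regular part to the classical smooth coarea formula via a $\CC^1$--Luzin decomposition, cleaning up the residual Lebesgue null set by means of the sharp $N$--property Theorem~\ref{LDST}. In the parameters of Theorem~\ref{D-Coarea} we have $k = 1$, $\p = n$, hence $s = n - m$, which matches the exponent appearing on the right-hand side of~\eqref{dub-coar1}.

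\emph{Singular part.} Set $E_{\mathrm{sing}} := E \cap Z_{v,m}$; on this set $J_m v \equiv 0$ by definition of the $m$--critical set, so $E_{\mathrm{sing}}$ contributes nothing to the left-hand side of~\eqref{dub-coar1}. Applying Theorem~\ref{DST} with $k = 1$, $\p = n$, $s = n - m$ yields
$$ \H^{n-m}\bigl( Z_{v,m} \cap v^{-1}(y) \bigr) = 0 \qquad \text{for } \H^m\text{-a.e.\ } y \in \R^d, $$
so $E_{\mathrm{sing}}$ also contributes nothing to the right-hand side. It therefore suffices to prove the identity with $E$ replaced by $E_{\mathrm{reg}} := \{ x \in E : \rank \nabla v(x) = m \}$.

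\emph{Regular part.} I would invoke a $\CC^1$--Luzin approximation at the borderline scale, available for Sobolev--Lorentz mappings of class $\WW^1_{n,1}$: there exist pairwise disjoint compact sets $F_j \subset E_{\mathrm{reg}}$ and $\CC^1$ mappings $g_j \colon \R^n \to \R^d$ with $v|_{F_j} = g_j|_{F_j}$ and $\nabla v(x) = \nabla g_j(x)$ at every $x \in F_j$, such that $\Le^n\bigl(E_{\mathrm{reg}} \setminus \bigcup_j F_j\bigr) = 0$. Since $g_j \in \CC^1$ with $\rank \nabla g_j = m$ on $F_j$, the classical smooth coarea formula for mappings into $\R^d$ (Federer's coarea identity with slicing by $\H^m$ on the target) gives
$$ \int_{F_j} \! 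J_m v(x) \, \dd x \;=\; \int_{F_j} \! J_m g_j(x) \, \dd x \;=\; \int_{\R^d} \H^{n-m}\bigl( F_j \cap v^{-1}(y) \bigr) \, \dd \H^m(y). $$
Summing over $j$ and invoking monotone convergence on both sides, and then disposing of the residual set $N := E_{\mathrm{reg}} \setminus \bigcup_j F_j$ via Theorem~\ref{LDST} (with the same parameters: $\Le^n(N) = 0$ forces $\H^{n-m}(N \cap v^{-1}(y)) = 0$ for $\H^m$-a.e.\ $y$), completes the identity on $E_{\mathrm{reg}}$; combining with the singular part yields the theorem.

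The main obstacle is controlling the behaviour of $v$ on the Lebesgue null remainder of $E_{\mathrm{reg}}$: without Theorem~\ref{LDST} one would be forced to impose $m$--rectifiability or $\sigma$--finiteness of $v(\R^n)$ with respect to $\H^m$ in order to slice against $\H^m$ on the target. The novelty of Theorem~\ref{D-Coarea} is precisely that the sharp Luzin $N$--property of Sobolev--Lorentz mappings at the borderline exponent allows one to bypass such structural hypotheses on the image, which is what makes the $\CC^1$--Luzin plus $N$--property strategy work in this generality.
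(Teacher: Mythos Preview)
Your overall architecture---split $E$ into singular and regular parts, dispose of the singular part via Theorem~\ref{DST}, pass to a $\CC^1$ statement on the regular part by Luzin approximation, and clean up the Lebesgue--null remainder by the $N$--property---mirrors the paper's. The genuine gap is in the step you label ``classical'': the identity
\[
\int_{F_j} J_m g_j(x)\,\dd x \;=\; \int_{\R^d} \H^{n-m}\bigl(F_j \cap g_j^{-1}(y)\bigr)\,\dd\H^m(y)
\]
for a $\CC^1$ map $g_j\colon\R^n\to\R^d$ with $m<d$ is \emph{not} a standard result, and indeed the paper emphasizes (Remark~\ref{Coar-rem} and Theorem~\ref{D-Coarea-smooth}) that even the smooth case is new. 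Federer's coarea formula is for targets $\R^m$; when $m<d$ the available results (Theorem~\ref{D-Coarea-smooth-p}, due to Ohtsuka and Karmanova) require the image $g_j(F_j)$ to be $\H^m$--$\sigma$-finite, and the hypothesis $\rank\nabla g_j=m$ on $F_j$ alone does not force this. For instance, take $n=3$, $d=2$, $m=1$, and $g(x,y,z)=(x,f(y,z))$ with a Whitney-type $f\in\CC^1(\R^2)$ whose critical values fill an interval: the rank-$1$ set maps onto a product of a line with an interval, which is not $\H^1$--$\sigma$-finite.

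Your closing paragraph correctly identifies $\H^m$--$\sigma$-finiteness of the image as the obstruction but attributes the fix to Theorem~\ref{LDST}. That theorem is an $N$--property for null sets; it tells you nothing about $\sigma$-finiteness of the image of a set of positive measure. What the paper actually uses is the finiteness estimate $\Psi(Z'_v\cap E)<\infty$ coming from Lemma~\ref{lb11} (applied with $q=m-1$ and then index-shifted), which via Theorem~\ref{finit-alaFalc} yields~\eqref{cpr2}: the set $\{y\in\R^d:\H^{n-m}(Z_{v,m+1}\cap v^{-1}(y))>0\}$ is $\H^m$--$\sigma$-finite. Only with this in hand can one legitimately invoke Theorem~\ref{D-Coarea-smooth-p} on the smooth pieces and proceed as you outline. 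So your plan is missing the one ingredient the paper singles out as new: the $\sigma$-finiteness of the ``relevant'' part of the image, established through the entropy/maximal-function machinery of Lemma~\ref{lb11} rather than through any Luzin $N$--property.
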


The proof relies crucially on the results of~\cite{Oht} and
\cite{Karm} that give criteria for the validity of the coarea
formula for Lipschitz mappings between metric spaces, see
also~\cite{BKA} and \cite{Maly3,MSZ}.

Thus, to study the level sets for the borderline case~$q=m-1$ in
Theorem~\ref{DST-q}, one must take $m'=m-1$ instead of~$m$ in
Theorem~\ref{D-Coarea}.

\begin{rem}\label{Coar-rem}
Note that for the case $m=n$ the formula (\ref{dub-coar1})
corresponds to the {\it area formula} whose validity for Sobolev
mappings supporting the $N$--property is well-known (see, e.g.,
\cite{Maly-area} and \cite{Maly1}, where the $N$-property was
established for mappings of class~$W^1_{n,1}$\,). But for $m< n$
the result is new even for~smooth mappings, since usually the
formula (\ref{dub-coar1}) is proved under the assumption~$d=m$
(see, e.g., \cite{MSZ} for Sobolev functions $W^1_p(\R^n,\R^m)$\,)
or, when $m<d$, under the assumption that the image $v(E)$ is a
$\H^m$-$\sigma$--finite set (e.g., \cite{Oht}, \cite{Karm}, see
also Theorem~\ref{D-Coarea-smooth-p} of the present paper\,).
\end{rem}

From the Coarea formula~(\ref{dub-coar1}) it follows directly,
that the set of $y\in\R^d$ where the integrand in the right-hand
side of~(\ref{dub-coar1}) is positive, is $\H^m$-$\sigma$--finite.
Indeed, from Theorem~\ref{D-Coarea} and \cite[Theorem~1.3]{Karm}
we obtain immediately the following more precise statement:

\begin{cor}\label{rect-cor}{\sl
Let $m\in\{0,\dots, \min(d,n) \}$  and $v\in
\WW^{1}_{n,1}(\R^n,\R^d)$. Then the set
$$
\biggl\{y\in\R^d\ :\ \H^{n-m}\bigl(Z_{v,m+1}\cap
v^{-1}(y)\bigr)>0\biggr\}
$$
is $\H^m$--rectifiable, i.e., it is a union of a set of
$\H^m$-measure zero and a~countable family of images $g_i(S_i)$ of
Lipschitz mappings $g_i \colon S_{i} \subseteq \R^m\to\R^d$. Here
again $Z_{v,m+1}=\{x\in\R^n\setminus A_v : \rank \nabla v(x)\le
m\}$. }
\end{cor}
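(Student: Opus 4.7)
My plan is to combine Theorem~\ref{D-Coarea} with \cite[Theorem~1.3]{Karm}, exactly as the authors indicate. First I would exploit the coarea formula locally: fix $R>0$ and apply Theorem~\ref{D-Coarea} with $E=Z_{v,m+1}\cap B_R$, where $B_R\subset\R^n$ is the open ball of radius $R$. Since the $m$-Jacobian obeys the pointwise bound $J_mv(x)\le|\nabla v(x)|^m$ (it is the product of the $m$ largest singular values, each controlled by $|\nabla v|$), and since the hypothesis $v\in \WW^1_{n,1}(\R^n,\R^d)$ gives $\nabla v\in\LL_{n,\loc}$ with $m\le n$, we have $\int_{E}J_mv(x)\,\dd x<\infty$. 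Theorem~\ref{D-Coarea} then yields
$$
\int_{\R^d}\H^{n-m}\bigl(Z_{v,m+1}\cap B_R\cap v^{-1}(y)\bigr)\,\dd\H^m(y)<\infty,
$$
and a layer-cake decomposition of this finite integral shows that the set $\Sigma_R=\{y:\H^{n-m}(Z_{v,m+1}\cap B_R\cap v^{-1}(y))>0\}$ is $\H^m$-$\sigma$-finite. Exhausting $\R^n$ by balls yields the $\H^m$-$\sigma$-finiteness of the full set $\Sigma$ appearing in the corollary.

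Next, to upgrade $\sigma$-finiteness to $\H^m$-rectifiability I would invoke \cite[Theorem~1.3]{Karm}. That result asserts, for a mapping for which the coarea identity (\ref{dub-coar1}) holds, that the set of values carrying a positive $(n-m)$-dimensional fibre is $\H^m$-rectifiable, provided the mapping can be reduced to the Lipschitz case on countably many pieces. To meet these hypotheses I would decompose $\R^n\setminus A_v=\bigcup_{j}N_j$ into countably many Borel sets on which $v$ agrees with a Lipschitz mapping; such a Luzin-type decomposition is standard for $\WW^1_{n,1}$ mappings, following from the pointwise differentiability of $v$ off $A_v$ established in \cite{KK15} together with a Whitney-type extension. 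On each $N_j$ the restriction $v|_{N_j}$ is Lipschitz, so \cite[Theorem~1.3]{Karm} applies directly and produces an $\H^m$-rectifiable cover of the portion of $\Sigma$ arising from $N_j\cap Z_{v,m+1}$. Summing over $j$, and using that $v(A_v)$ is $\H^m$-negligible (by the Luzin $N$-property for $\WW^1_{n,1}$ mappings recalled in the Introduction), yields the required rectifiable representation of $\Sigma$.

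The main technical obstacle is verifying the hypotheses of \cite{Karm} — originally formulated for Lipschitz maps — in the Sobolev--Lorentz setting; this is resolved by the Luzin-type decomposition outlined above, which is already implicit in the machinery of \cite{BKK,BKK2,KK15,KK3}. Once this reduction is in place, the corollary follows immediately from Theorem~\ref{D-Coarea} and \cite[Theorem~1.3]{Karm}, as announced.
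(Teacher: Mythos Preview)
Your proposal is correct and follows essentially the same route the paper indicates: the paper treats the corollary as immediate from Theorem~\ref{D-Coarea} together with \cite[Theorem~1.3]{Karm}, and your argument spells out exactly those two ingredients with additional detail. One small clean-up: the step invoking $\H^m$-negligibility of $v(A_v)$ is unnecessary, since by definition $Z_{v,m+1}\subset\R^n\setminus A_v$, so $A_v$ never enters the set $\Sigma$ at all; your Lipschitz decomposition of $\R^n\setminus A_v$ (via the $\CC^1$-approximation Theorem~\ref{Th_ap} and Corollary~\ref{LDST-cor-cr} to dispose of the exceptional measure-zero set) already suffices to place the problem in the Lipschitz framework where \cite[Theorem~1.3]{Karm} applies.
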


\begin{rem}\label{Coar-rem2}In view of the embedding $\WW^{k}_{\p,1}(\R^n)\hookrightarrow
\WW^1_{n,1}(\R^n)$ for $k\in\{1,\dots,n\}$, $\p=\frac{n}k$ (see,
e.g., \cite[\S8]{Maly2}\,), the assertions of
Theorem~\ref{D-Coarea} and Corollary~\ref{rect-cor} are in
particular valid for the mappings $v\in \WW^k_{\p,1}(\R^n,\R^d)$,
i.e., under the conditions of Theorem~\ref{DST-q}.
\end{rem}

Again Theorems~\ref{DST-q} and \ref{D-Coarea} are in harmony with
our definition of critical set (recall that $\H^\p(A_v)=0$\,)
because of the following analog of the Luzin $N$-property:

\begin{ttt}\label{LDST-q}{\sl
Let $k\in\{1,\dots,n\}$, $\p = n/k$ and $v\in
\WW^{k}_{\p,1}(\R^n,\R^d)$. Then for every $p\in[\p,n]$, \
$q\in[0,p]$ and for any set $E\subset\R^n$ with $\H^p(E)=0$ we have
\begin{equation}\label{dub4-q}
\H^{p-q}(E\cap v^{-1}(y))=0\qquad\mbox{ for \ $\H^q$-a.a.
}y\in\R^d .
\end{equation}
In particular, \
\begin{equation}\label{dub4-q''}
\H^p\bigl(v(E)\bigr)=0\qquad\mbox{ whenever}\quad\H^p(E)=0,\quad
p\in[\p,n].
\end{equation} }
\end{ttt}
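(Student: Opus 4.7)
The plan is to split $E=(E\setminus A_v)\cup(E\cap A_v)$ and treat the two pieces by different methods. On $\R^n\setminus A_v$ the mapping $v$ is classically differentiable (in the Fr\'echet--Peano sense), so Stepanov's theorem produces a countable Borel partition $\R^n\setminus A_v=\bigcup_i F_i$ on which $v$ restricts to a Lipschitz map with finite constant $L_i$. For each $i$, the Eilenberg coarea inequality for Lipschitz maps gives
\begin{equation*}
\int^{*}_{\R^d}\H^{p-q}\bigl(E\cap F_i\cap v^{-1}(y)\bigr)\,\dd\H^{q}(y)\le C(p,q)\,L_i^{q}\,\H^{p}(E\cap F_i)=0,
\end{equation*}
so the integrand vanishes $\H^{q}$-a.e.; summing over $i$ disposes of $E\setminus A_v$.

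For the exceptional piece $A:=E\cap A_v$ we have $\H^{\p}(A)\le\H^{\p}(A_v)=0$, and we distinguish three sub-ranges of $q$. (a) If $p-q\ge\p$, the fact that $\H^{s}(B)<\infty$ implies $\H^{t}(B)=0$ for $t>s$ yields $\H^{p-q}(A)=0$, whence the claim is trivial. (b) If $q\ge\p$, the $\p$--dimensional Luzin $N$--property from \cite{BKK2,KK15} gives $\H^{\p}(v(A_v))=0$, so $\H^{q}(v(A))=0$ and the slice $A\cap v^{-1}(y)$ is empty for $\H^{q}$-a.e.~$y$. (c) In the remaining band $\max\{0,p-\p\}<q<\p$, one invokes the refined Luzin $N$--property for lower-dimensional content: whenever $\H^{\p}(A)=0$ and $q\in[0,\p]$, one has $\H^{\p-q}(A\cap v^{-1}(y))=0$ for $\H^{q}$-a.e.~$y\in\R^d$. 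Since $p\ge\p$ gives $p-q\ge\p-q$, the required vanishing of $\H^{p-q}$ on the slice follows from that of $\H^{\p-q}$ by the same exponent-monotonicity used in (a).

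The decisive and most delicate ingredient is the refined Luzin $N$--property used in case (c): it is not a soft measure-theoretic statement but rests on sharp modulus-of-continuity bounds for $\WW^{k}_{\p,1}$--mappings in terms of the $\LL_{\p,1}$-norm of the top-order gradient, combined with weighted Whitney/Besicovitch covers that convert this summability into vanishing lower-dimensional Hausdorff content on preimages. The Lorentz refinement $\LL_{\p,1}$ is essential here (cf.\ Remark~\ref{Sobolev_rr}): without it even the continuity of $v$ can fail. These estimates are precisely the input supplied by the joint works \cite{BKK,BKK2,KK15}. Finally, the ``in particular'' statement \eqref{dub4-q''} is the case $q=p$ of the main claim, since $\H^{0}(E\cap v^{-1}(y))=0$ for $\H^{p}$-a.e.~$y$ is equivalent to $\H^{p}(v(E))=0$.
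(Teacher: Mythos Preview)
Your reduction has a genuine gap in case~(c). What you call the ``refined Luzin $N$--property for lower-dimensional content'' --- namely that $\H^{\p}(A)=0$ forces $\H^{\p-q}(A\cap v^{-1}(y))=0$ for $\H^{q}$-a.e.\ $y$ when $q\in[0,\p]$ --- is exactly the statement of Theorem~\ref{LDST-q} in the special case $p=\p$. It is \emph{not} available from \cite{BKK,BKK2,KK15}: those references supply only the unsliced $N$--property recorded here as Theorem~\ref{th3.3}, i.e.\ $\H^{q}(v(E))=0$ whenever $\H^{q}(E)=0$ for $q\in[\p,n]$, which is the degenerate case $p=q$, $\mu=0$. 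So case~(c) invokes the very result being proved, and the paragraph you append is a sketch of the required argument rather than a proof. That sketch (``modulus-of-continuity bounds \dots\ weighted Whitney/Besicovitch covers'') is essentially the paper's method, and the paper carries it out uniformly for every $p\in[\p,n]$ at no extra cost; your reduction to $p=\p$ therefore saves nothing.

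For comparison, the paper does not split $E$ at all. It introduces the set function
\[
\Phi(E)=\inf_{E\subset\bigcup_\alpha D_\alpha}\sum_\alpha(\diam D_\alpha)^{p-q}\bigl[\diam v(D_\alpha)\bigr]^{q},
\]
proves in the Appendix (Theorem~\ref{alaFalc}) that $\Phi(E)=0$ implies the sliced conclusion~(\ref{dub4-q}), and then shows $\Phi(E)=0$ directly whenever $\H^{p}(E)=0$. For this last step one covers $E$ by a \emph{regular} dyadic family $\{Q_\alpha\}$ with $\sum_\alpha\ell(Q_\alpha)^{p}<\delta$ (Lemmas~\ref{lemD}, \ref{lb5.1}), invokes the key summability Lemma~\ref{Thh3.3} to make
\(
\sum_\alpha\bigl(\ell(Q_\alpha)^{p-n}\|\nabla v\|_{\LL_p(Q_\alpha)}^{p}+\|1_{Q_\alpha}\nabla^{k}v\|_{\LL_{\p,1}}^{p}\bigr)<\varepsilon,
\)
bounds each $\diam v(Q_\alpha)$ via Corollary~\ref{me1}, and closes with H\"older's inequality to get $\Phi(E)\le c\,\delta^{(p-q)/p}\varepsilon^{q/p}$. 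The Lorentz hypothesis enters only through Lemma~\ref{Thh3.3} and Corollary~\ref{me1}.

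Your Stepanov/Eilenberg argument for the differentiable part $E\setminus A_v$ is correct and pleasant (the pointwise Fr\'echet differentiability on $\R^n\setminus A_v$ does give a genuine countable Lipschitz partition, no residual null set needed), and cases (a)--(b) are fine. But these cover only the soft portion of the theorem; the entire difficulty is concentrated in case~(c), which you have deferred rather than proved.
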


By a simple calculation we have for $q\in[0,\b]$ that
\begin{eqnarray}
\mu_q &=& n-m-k+1+k(m-q) \nonumber\\
&=& (\p-q)k+(m-1)(k-1)\ge\max(\p-q,0).\label{dub5-q}
\end{eqnarray}
Theorem~\ref{LDST-q} then yields

\begin{cor}\label{LDST-cor}{\sl
Let $k,m\in\{1,\dots,n\}$ \,and \,$v\in
\WW^{k}_{\p,1}(\R^n,\R^d)$. Then for every $q\in[0,+\infty)$ and
for any set $E$ with $\H^\p(E)=0$ we have
\begin{equation}\label{dub6-q}
\H^{\mu_q}(E\cap v^{-1}(y))=0\qquad\mbox{ for \ $\H^q$--a.a.
}y\in\R^d.
\end{equation}
}
\end{cor}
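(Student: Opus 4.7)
The plan is to derive the corollary directly from Theorem~\ref{LDST-q}, splitting the argument according to whether $q \le \p$ or $q > \p$.

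For $q \in [0, \p]$, I would invoke the level-set assertion~(\ref{dub4-q}) of Theorem~\ref{LDST-q} with the admissible choice $p = \p$ (so that the condition $q \in [0,p]$ is satisfied). This gives $\H^{\p - q}(E \cap v^{-1}(y)) = 0$ for $\H^q$-a.a.\ $y \in \R^d$. The inequality $\mu_q \ge \p - q$ recorded in~(\ref{dub5-q}), combined with the elementary monotonicity property of Hausdorff measure (if $\H^{s_1}(A) = 0$ then $\H^{s_2}(A) = 0$ whenever $s_2 \ge s_1 \ge 0$), then yields~(\ref{dub6-q}) in this range.

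For $q > \p$, the strategy is to prove the stronger statement that $v(E)$ is $\H^q$-null, which forces $E \cap v^{-1}(y) = \emptyset$ for $\H^q$-a.a.\ $y$, and hence~(\ref{dub6-q}) follows trivially regardless of the sign of $\mu_q$. When $q \in (\p, n]$, I would apply the Luzin $N$-property~(\ref{dub4-q''}) with $p = q$, noting that $\H^\p(E) = 0$ together with $p \ge \p$ gives $\H^p(E) = 0$ by the same monotonicity. When $q > n$, I would instead apply~(\ref{dub4-q''}) with $p = n$ to obtain $\H^n(v(E)) = 0$, and the monotonicity of Hausdorff measure again delivers $\H^q(v(E)) = 0$ for every $q > n$.

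The step that calls for the most attention is the regime $q > \p$, where the formula~(\ref{dub4-q}) can no longer be used with $p = \p$ (the admissibility condition $q \in [0, p]$ fails), and one must pivot to the Luzin $N$-property~(\ref{dub4-q''}) and exploit the paper's convention that $\H^\beta$ denotes the counting measure for $\beta \le 0$. Once this case split is accepted, the whole argument is essentially bookkeeping, and no machinery beyond Theorem~\ref{LDST-q} and the algebraic identity~(\ref{dub5-q}) is required.
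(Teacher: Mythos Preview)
Your proposal is correct and follows essentially the same route as the paper: the paper's entire justification is the line ``Theorem~\ref{LDST-q} then yields'' after recording the inequality~(\ref{dub5-q}), and your case split $q\le\p$ versus $q>\p$ (with the further distinction $q\le n$ versus $q>n$) is exactly the right way to unpack that one-liner. In fact your write-up is more careful than the paper's, since~(\ref{dub5-q}) is only stated for $q\in[0,\b]$ while the corollary claims all $q\ge 0$, and you correctly handle the remaining range via the $N$-property~(\ref{dub4-q''}) and the counting-measure convention.
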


Consequently, for every $q\in [0,+\infty)$
\begin{equation}\label{dub7-q}
\H^{\mu_q}(A_v\cap v^{-1}(y))=0\qquad\mbox{ for \ $\H^q$--a.a. }y\in\R^d,
\end{equation}
where we recall that $A_v$ is the set of nondifferentiability
points of~$v$ (cf. with~(\ref{dub2-q})\,).
\bigskip

Finally, applying the $N$-property~(Theorem~\ref{LDST-q}) for
$p=n$, $q=m\le n$, we obtain
\begin{cor}\label{LDST-cor-cr}{\sl
Let $n$, $d \in \N$, \ $m\in[0,n]$, and $v\in
\WW^{1}_{n,1}(\R^n,\R^d)$. Then for any set $E$ of zero
$n$-Lebesgue measure $\Le^n(E)=0$ the identity
\begin{equation}\label{dub4-q-crr} \H^{n-m}(E\cap
v^{-1}(y))=0\qquad\mbox{ for \ $\H^m$-a.a. }y\in\R^d
\end{equation}
holds. }
\end{cor}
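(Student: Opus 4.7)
The plan is to obtain Corollary~\ref{LDST-cor-cr} as an immediate specialization of Theorem~\ref{LDST-q}. First I would fix the parameters: take $k=1$ in Theorem~\ref{LDST-q}, so that $\p=n/k=n$, and the hypothesis $v\in\WW^{1}_{n,1}(\R^n,\R^d)$ matches the requirement $v\in\WW^{k}_{\p,1}(\R^n,\R^d)$ of that theorem. Then I would further specialize the two free parameters in Theorem~\ref{LDST-q} by taking $p=n$ and $q=m$; these choices are admissible, since the constraint $p\in[\p,n]=[n,n]$ forces $p=n$, and the constraint $q\in[0,p]=[0,n]$ is exactly the hypothesis $m\in[0,n]$ of the corollary.

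Next I would translate the Lebesgue-measure hypothesis into a Hausdorff-measure hypothesis. Since $\H^{n}$ and $\Le^{n}$ agree on Borel subsets of $\R^{n}$ up to a positive multiplicative constant (the dimensional normalization), the assumption $\Le^{n}(E)=0$ is equivalent to $\H^{n}(E)=0$; in particular, $E$ satisfies the hypothesis $\H^{p}(E)=0$ of Theorem~\ref{LDST-q} for $p=n$. Applying Theorem~\ref{LDST-q} with the above choice of parameters then yields
\begin{equation*}
\H^{p-q}(E\cap v^{-1}(y))=\H^{n-m}(E\cap v^{-1}(y))=0\qquad\text{for }\H^{q}\text{-a.a. }y\in\R^{d},
\end{equation*}
which is exactly the conclusion~(\ref{dub4-q-crr}).

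There is essentially no obstacle here: all the analytic work (the construction of the exceptional set $A_v$, the Luzin $N$--property with respect to lower dimensional Hausdorff content, and the passage between $\H^{p-q}$ and $\H^{q}$) is absorbed into Theorem~\ref{LDST-q}. The only points to verify are the admissibility of the parameter choices and the equivalence $\Le^{n}(E)=0\Leftrightarrow\H^{n}(E)=0$, both of which are routine. In effect, Corollary~\ref{LDST-cor-cr} is the ``top endpoint'' case $p=n$ of the more general $N$--property established in Theorem~\ref{LDST-q}.
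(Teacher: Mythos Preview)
Your proposal is correct and follows essentially the same approach as the paper: the paper derives the corollary in one line by ``applying the $N$-property (Theorem~\ref{LDST-q}) for $p=n$, $q=m\le n$,'' which is precisely your specialization $k=1$, $\p=n$, $p=n$, $q=m$, together with the equivalence $\Le^n(E)=0\Leftrightarrow\H^n(E)=0$ already recorded in the preliminaries.
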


Thus the sets of $n$-Lebesgue measure zero (in particular, the set
of nondifferentiability points $A_v$\,) are negligible in the
Coarea formula~(\ref{dub-coar1}).

Finally, let us comment briefly on the proofs that merge ideas from our previous
papers~\cite{BKK2}, \cite{KK3,KK15} and \cite{HZ}. In particular, the joint
papers \cite{BKK,BKK2} by two of the authors with J.~Bourgain contain many of
the key ideas that allow us to consider nondifferentiable Sobolev mappings.
For the implementation of these ideas one relies on estimates for the Hardy--Littlewood
maximal function in terms of Choquet type integrals with respect to Hausdorff capacity.
In order to take full advantage of the Lorentz context we exploit the recent estimates
from~\cite{KK15} (recalled in Theorem~\ref{lb7} below, see also ~\cite{Ad} for the case~$p=1$).
As in \cite{BKK2} (and subsequently in \cite{KK3}) we also crucially use Y.~Yomdin's (see ~\cite{Yom})
entropy estimates of near critical values for polynomials (recalled in Theorem~\ref{lb8} below).

In addition to the above mentioned papers there is a growing number of papers on the topic, including
\cite{Alb,AS,Bates,BHS,Bu,Fig,HeHo15,CGTAM,Nor,PZ,Putten,Putten1}.
\bigskip

\noindent
{\em Acknowledgment.} P.H. was supported by NSF grant
DMS-1500647. \ M.K. was partially supported by the Russian
Foundations for Basic Research (Grant No.~14-01-00768-a) and by
the Dynasty Foundation.

\section{Preliminaries}
\label{prel}

\noindent
By  an \textbf{$n$--dimensional interval} we mean a closed
cube in $\R^n$ with sides parallel to the coordinate axes. If $Q$
is an $n$--dimensional cubic interval then we write $\ell(Q)$ for
its sidelength.

For a subset $S$ of $\R^n$ we write $\Le^n(S)$ for its outer
Lebesgue measure. The $m$--dimensional Hausdorff measure is
denoted by $\H^m$ and the $m$--dimensional Hausdorff content by
$\H^{m}_{\infty}$. Recall that for any subset $S$ of $\R^n$ we
have by definition
$$
\H^m (S)=\lim\limits_{\alpha\searrow 0}\H^m_\alpha (S) =
\sup_{\alpha >0} \H^{m}_{\alpha}(S),
$$
where for each $0< \alpha \leq \infty$,
$$
\H^m_\alpha (S)=\inf\left\{ \sum_{i=1}^\infty(\diam S_i)^m\ :\
\diam S_i\le\alpha,\ \ S \subset\bigcup\limits_{i=1}^\infty S_i
\right\}.
$$
It is well known that $\H^n(S)  = \H^n_\infty(S)\sim\Le^n(S)$ for
sets~$S\subset\R^n$.

To simplify the notation, we write $\|f\|_{\LL_p}$ instead of
$\|f\|_{\LL_p(\R^n)}$, etc.

The Sobolev space $\WW^{k}_{p} (\R^n,\R^d)$ is as usual defined as
consisting of those $\R^d$-valued functions $f\in \LL_p(\R^n)$
whose distributional partial derivatives of orders $l\le k$ belong
to $\LL_p(\R^n)$ (for detailed definitions and differentiability
properties of such functions see, e.g., \cite{EG}, \cite{M},
\cite{Ziem}, \cite{Dor}). Denote by $\nabla^k f$ the vector-valued
function consisting of all $k$-th order partial derivatives of $f$
arranged in some fixed order. However, for the case of first order
derivatives $k=1$ we shall often think of $\nabla f(x)$ as the
Jacobi matrix of $f$ at $x$, thus the $d \times n$ matrix whose
$r$-th row is the vector of partial derivatives of the $r$-th
coordinate function.

We use the norm
$$
\|f\|_{\WW^{k}_{p}}=\|f\|_{\LL_p}+\|\nabla
f\|_{\LL_p}+\dots+\|\nabla^kf\|_{\LL_p},
$$
 and unless otherwise specified all norms on the
spaces $\R^s$ ($s \in \N$) will be the usual euclidean norms.

Working with locally integrable functions, we always assume that
the precise representatives are chosen. If $w\in
L_{1,\loc}(\Omega)$, then the precise representative $w^*$ is
defined for {\em all} $x \in \Omega$ by
\begin{equation}
\label{lrule}w^*(x)=\left\{\begin{array}{rcl} {\displaystyle
\lim\limits_{r\searrow 0} \dashint_{B(x,r)}{w}(z)\,\dd z}, &
\mbox{ if the limit exists
and is finite,}\\
 0 \qquad\qquad\quad & \; \mbox{ otherwise},
\end{array}\right.
\end{equation}
where the dashed integral as usual denotes the integral mean,
$$
\dashint_{B(x,r)}{ w}(z) \, \dd
z=\frac{1}{\Le^n(B(x,r))}\int_{B(x,r)}{ w}(z)\,\dd z,
$$
and $B(x,r)=\{y: |y-x|<r\}$ is the open ball of radius $r$
centered at $x$. Henceforth we omit special notation for the
precise representative writing simply $w^* = w$.

We will say that $x$~is an $\LL_p$--Lebesgue point of~$w$ (and
simply a Lebesgue point when $p=1$), if
$$
\dashint_{B(x,r)}|{ w}(z)-w(x)|^p \,\dd z\to0\quad\mbox{ as }\quad
r\searrow 0.
$$

If $k<n$, then it is well-known that functions from Sobolev spaces
$\WW^{k}_{p}(\R^n)$ are continuous for $p>\frac{n}k$ and could be
discontinuous for $p\le \p=\frac{n}k$ (see, e.g., \cite{M,Ziem}).
The Sobolev--Lorentz space $\WW^{k}_{\p,1}(\R^n)\subset
\WW^{k}_{\p}(\R^n)$ is a refinement of the corresponding Sobolev
space that for our purposes turns out to be convenient. Among
other things functions that are locally in $\WW^{k}_{\p,1}$ on
$\R^n$ are in particular continuous.

Here we shall mainly be concerned with the Lorentz space
$\LL_{p,1}$, and in this case one may rewrite the norm as (see for
instance \cite[Proposition 3.6]{Maly2})
\begin{equation}\label{lor1}
\|f\|_{p,1}=
\int\limits_0^{+\infty}\bigl[\Le^n(\{x\in\R^n:|f(x)|>t\})\bigr]^{
\frac1p} \, \dd t.
\end{equation}
We record the following subadditivity property of the Lorentz norm
for later use.

\begin{lem}[see, e.g., \cite{rom1} or \cite{Maly2}]\label{asr3}
{\sl Suppose that $1\le p<\infty$ and $E=\bigcup_{j \in \N} E_j$,
where $E_j$ are measurable and mutually disjoint subsets of
$\R^n$. Then for all $f\in \LL_{p,1}$ we have
$$
\sum_j\|f\cdot 1_{E_j}\|^p_{\LL_{p,1}}\le\|f\cdot
1_E\|^p_{\LL_{p,1}},
$$
where $1_E$ denotes the indicator function of the set~$E$.}
\end{lem}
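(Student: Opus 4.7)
The plan is to recognize the claimed inequality as an instance of Minkowski's integral inequality applied with counting measure on $\N$ against Lebesgue measure on $(0,\infty)$. I begin by exploiting the disjointness of the $E_j$: for every $t>0$ one has the disjoint decomposition
$$
\{x\in\R^n:|f(x)\cdot 1_E(x)|>t\}=\bigsqcup_{j\in\N}\{x\in\R^n:|f(x)\cdot 1_{E_j}(x)|>t\},
$$
and hence, setting $\lambda_j(t):=\Le^n(\{|f\cdot 1_{E_j}|>t\})$ and $\lambda(t):=\Le^n(\{|f\cdot 1_E|>t\})$, the distribution functions satisfy $\lambda(t)=\sum_{j}\lambda_j(t)$ for every $t>0$.

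Next, I would put $a_j(t):=\lambda_j(t)^{1/p}$, so that $\bigl(\sum_j a_j(t)^p\bigr)^{1/p}=\lambda(t)^{1/p}$. The Minkowski integral inequality, which is valid because $p\ge 1$, with $\N$ carrying the counting measure and $(0,\infty)$ carrying Lebesgue measure, yields
$$
\Bigl(\sum_{j}\Bigl(\int_0^{\infty}a_j(t)\,\dd t\Bigr)^{p}\Bigr)^{1/p}\le \int_0^{\infty}\Bigl(\sum_{j}a_j(t)^{p}\Bigr)^{1/p}\dd t.
$$
By the representation of the Lorentz norm in~(\ref{lor1}), the left-hand side equals $\bigl(\sum_{j}\|f\cdot 1_{E_j}\|_{\LL_{p,1}}^{p}\bigr)^{1/p}$, while the right-hand side equals $\|f\cdot 1_E\|_{\LL_{p,1}}$. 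Raising to the $p$-th power then gives the claimed inequality.

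I do not anticipate any real obstacle. The only mildly delicate point is handling a possibly countably infinite index set: this is dispatched by applying the Minkowski inequality first to finite truncations $\sum_{j\le N}$ and then passing to the limit $N\to\infty$ via monotone convergence, both in the $t$-integral (the integrands are increasing in~$N$) and in the outer sum. Note that for $p=1$ the inequality is actually an equality, in line with the $\sigma$-additivity of the $\LL_1$-norm under disjoint decomposition; the nontriviality of the estimate for $p>1$ is precisely what Minkowski's integral inequality supplies.
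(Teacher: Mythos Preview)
Your proof is correct: the disjointness gives $\lambda(t)=\sum_j\lambda_j(t)$ for the distribution functions, and Minkowski's integral inequality (counting measure on $\N$ against Lebesgue measure on $(0,\infty)$) applied to $a_j(t)=\lambda_j(t)^{1/p}$ yields exactly the stated bound via the representation~(\ref{lor1}). The paper does not supply its own argument here but simply cites \cite{rom1} and \cite{Maly2}; your approach is the standard one and is essentially what those references do.
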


Denote by $\WW^{k}_{p,1}(\R^n)$ the space of all functions $v\in
\WW^k_p(\R^n)$ such that in addition the Lorentz norm~$\|\nabla^k
v\|_{\LL_{p,1}}$ is finite.

For a mapping $u \in \LL_1(Q,\R^d)$, $Q\subset\R^n$, $m\in\N$,
define the polynomial $P_{Q,m}[u] $ of degree at most~$m$ by the
following rule:
\begin{equation}
\label{0}\int_Qy^\alpha \left( u(y)-P_{Q,m}[u](y) \right) \,\dd
y=0
\end{equation}
for any multi-index $\alpha=(\alpha_1,\dots,\alpha_n)$ of length
$|\alpha|=\alpha_1+\dots+\alpha_n\le {m}$.  Denote $P_{Q}[u]=
P_{Q,k-1}[u]$.

The following well--known bound will be used on several occasions.

\begin{lem}[see, e.g.,\cite{KK15}]\label{lb3}{\sl
Suppose $v\in \WW^{k}_{\p,1}(\R^n,\R^d)$ { with
$k\in\{1,\dots,n\}$}. Then $v$ is a continuous mapping and for any
$n$-dimensional cubic interval $Q\subset \R^n$ the estimate
\begin{equation}
\label{1} \sup\limits_{y\in Q}\bigl|v(y)-P_{Q}[v](y)\bigr|\le
C\|1_Q\cdot \nabla^{k} v\|_{\LL_{\p,1}}
\end{equation}
holds, where $C$ is a constant depending on $n,d$ only. Moreover,
the mapping $v_{Q}(y)=v(y)-P_{Q}[v](y)$, $y\in Q$, can be extended
from~$Q$ to the whole of $\R^n$ such that the extension (denoted
again) $v_{Q}\in\WW^{k}_{\p,1}(\R^n,\R^d)$ and
\begin{equation}
\label{1'} \|\nabla^{k} v_{Q}\|_{\LL_{\p,1}(\R^n)}\le C_0
\|1_Q\cdot \nabla^{k} v\|_{\LL_{\p,1}},
\end{equation}
 where $C_0$ also depends on $n,d$ only. }
\end{lem}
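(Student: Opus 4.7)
The plan is to establish both estimates by combining a Taylor-type integral representation with H\"older's inequality for Lorentz spaces, and then by invoking a standard extension operator bounded on the Sobolev--Lorentz scale.

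For the sup estimate, I fix $y\in Q$ and use the classical polynomial-reproducing integral representation (obtained by iterating the fundamental theorem of calculus, or equivalently by differentiating the defining conditions~(\ref{0}) of $P_Q[v]$): there exist kernels $K_\alpha(y,z)$ supported on $Q\times Q$ with $|K_\alpha(y,z)|\le C|y-z|^{k-n}$ such that
$$
v(y)-P_{Q}[v](y)=\int_Q\sum_{|\alpha|=k}K_\alpha(y,z)\,\partial^\alpha v(z)\,\dd z.
$$
The map $z\mapsto|y-z|^{k-n}$ lies in the weak Lorentz space $\LL_{p',\infty}(Q)$, with $p'=n/(n-k)$ the H\"older conjugate of $\p=n/k$, and its norm is bounded uniformly in $y\in Q$. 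The H\"older inequality for Lorentz spaces then yields
$$
|v(y)-P_{Q}[v](y)|\le C\bigl\| |\cdot-y|^{k-n}\bigr\|_{\LL_{p',\infty}(Q)}\,\|1_Q\cdot\nabla^k v\|_{\LL_{\p,1}}\le C'\,\|1_Q\cdot\nabla^k v\|_{\LL_{\p,1}},
$$
which is exactly~(\ref{1}). Continuity of $v$ is a byproduct: applied to small cubes centered at a point, the same bound combined with the absolute continuity of the Lorentz norm shows that the oscillation of $v$ vanishes in the limit.

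For the extension claim, I would invoke a linear extension operator $\mathcal{E}\colon\WW^{k}_q(Q)\to\WW^{k}_q(\R^n)$ (for instance Stein's reflection-type extension, or elementarily iterated odd reflections across the faces of $Q$) that is bounded uniformly for all $q$ in a small neighborhood of $\p$. By real interpolation, $\mathcal{E}$ is then also bounded from $\WW^{k}_{\p,1}(Q)$ into $\WW^{k}_{\p,1}(\R^n)$. Applying the Poincar\'e--Sobolev inequality in the Lorentz scale to $v_Q=v-P_Q[v]$, which by~(\ref{0}) is orthogonal to polynomials of degree at most $k-1$, one obtains
$$
\|v_Q\|_{\WW^{k}_{\p,1}(Q)}\le C\,\|\nabla^k v_Q\|_{\LL_{\p,1}(Q)}=C\,\|1_Q\cdot\nabla^k v\|_{\LL_{\p,1}}.
$$
Composing with $\mathcal{E}$ and renaming $\mathcal{E}(v_Q)$ as $v_Q$ delivers~(\ref{1'}).

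The main technical hurdle is the boundedness of $\mathcal{E}$ and of the Poincar\'e--Sobolev inequality in the Sobolev--Lorentz setting. Both are by now standard: they follow respectively from real interpolation between the classical Sobolev extension theorems and from the $\LL_{\p,1}\to\LL_\infty$ boundedness of the Riesz potential on bounded domains (the same ingredient that drives the first estimate above). All necessary Lorentz-space ingredients may be found in~\cite{KK15} and~\cite{Maly2}; once they are in place, the argument reduces to bookkeeping.
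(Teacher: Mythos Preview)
Your argument is sound and follows the standard route to such estimates: the Sobolev integral representation with a kernel dominated by $|y-z|^{k-n}$, combined with the Lorentz--H\"older pairing $\LL_{\p,1}\times\LL_{\p',\infty}\hookrightarrow\LL_1$, gives~(\ref{1}); and the extension claim follows by scaling to the unit cube, applying a Poincar\'e inequality for functions with vanishing polynomial moments, and interpolating a linear extension operator to the Lorentz scale.

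Note, however, that the paper itself does not prove this lemma at all: it is simply quoted from~\cite{KK15} with the tag ``see, e.g.,~\cite{KK15}'' and no further argument. So there is no ``paper's own proof'' to compare against; your sketch is precisely the kind of argument the citation is meant to cover.

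One minor simplification worth mentioning for the second part: for a cube, the higher-order Hestenes--Seeley reflection extension has the pointwise property that $\nabla^k(\mathcal{E}u)$ outside $Q$ is a finite linear combination of reflections of $\nabla^k u$ inside $Q$. This yields $\|\nabla^k(\mathcal{E}v_Q)\|_{\LL_{\p,1}(\R^n)}\le C\|\nabla^k v_Q\|_{\LL_{\p,1}(Q)}$ directly, bypassing both the Poincar\'e step and the interpolation argument. Your approach via the full $\WW^{k}_{\p,1}$ norm is of course also correct, just slightly less economical.
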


\begin{cor}[see, e.g., \cite{KK3}]\label{me1}{\sl Suppose $v\in \WW^{k}_{\p,1}(\R^n,\R^d)$
{ with $k\in\{1,\dots,n\}$}. Then $v$ is a continuous mapping and
for any $n$-dimensional cubic interval $Q\subset \R^n$ the
estimates
\begin{equation}
\label{fme1} \diam v(Q)\le C\biggl(\frac{\|\nabla
v\|_{\LL_\p(Q)}}{\ell(Q)^{k-1}}+ \|1_Q\cdot\nabla^{k}
v\|_{\LL_{\p,1}}\biggr)\le C\biggl(\frac{\|\nabla
v\|_{\LL_p(Q)}}{\ell(Q)^{\frac{n}p-1}}+ \|1_Q\cdot\nabla^{k}
v\|_{\LL_{\p,1}}\biggr)
\end{equation}
hold for every~$p\in[\p,n]$. }
\end{cor}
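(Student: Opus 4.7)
The plan is to split $\diam v(Q)$ via the triangle inequality into an oscillation of the projection polynomial and a remainder:
$$\diam v(Q) \le \diam P_Q[v](Q) + 2\sup_Q |v - P_Q[v]|.$$
The second summand is already controlled by Lemma~\ref{lb3}, which gives $\sup_Q |v - P_Q[v]| \le C \|1_Q \cdot \nabla^k v\|_{\LL_{\p,1}}$ and in particular the continuity of $v$. So the whole task reduces to estimating $\diam P_Q[v](Q)$ in terms of $\|\nabla v\|_{\LL_p(Q)}$.

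Since $P_Q[v]$ is a polynomial of degree at most $k-1$, the mean value inequality yields $\diam P_Q[v](Q) \le \sqrt{n}\,\ell(Q)\sup_Q|\nabla P_Q[v]|$, and $\nabla P_Q[v]$ is a polynomial of degree at most $k-2$. Rescaling $Q$ to the unit cube and invoking the fact that on the finite-dimensional space of polynomials of degree $\le k-2$ all norms are equivalent, I obtain the inverse-type estimate
$$\sup_Q|\nabla P_Q[v]| \le \frac{C}{\ell(Q)^{n/p}}\|\nabla P_Q[v]\|_{\LL_p(Q)}.$$
To replace $\nabla P_Q[v]$ by $\nabla v$ inside the norm, I would exploit that $P_Q$ fixes constants, so $\nabla P_Q[v] = \nabla P_Q[v-c]$ for every constant vector $c$. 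Taking $c = \dashint_Q v$ and combining the $\LL_p$-boundedness of the projection $P_Q$ (again a consequence of rescaling plus finite-dimensionality of the target polynomial space) with the classical Poincaré inequality $\|v - c\|_{\LL_p(Q)} \le C\ell(Q)\|\nabla v\|_{\LL_p(Q)}$ yields $\|\nabla P_Q[v]\|_{\LL_p(Q)} \le C\|\nabla v\|_{\LL_p(Q)}$. Altogether,
$$\diam P_Q[v](Q) \le C\,\ell(Q)^{1-n/p}\|\nabla v\|_{\LL_p(Q)} = \frac{C\|\nabla v\|_{\LL_p(Q)}}{\ell(Q)^{n/p-1}},$$
and specializing to $p=\p=n/k$ gives $\ell(Q)^{n/p-1} = \ell(Q)^{k-1}$, hence the middle expression of \eqref{fme1}.

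The remaining inequality in \eqref{fme1} is pure Hölder: for $p \in [\p,n]$,
$$\|\nabla v\|_{\LL_\p(Q)} \le \ell(Q)^{n(1/\p - 1/p)}\|\nabla v\|_{\LL_p(Q)} = \ell(Q)^{k - n/p}\|\nabla v\|_{\LL_p(Q)},$$
so dividing through by $\ell(Q)^{k-1}$ produces exactly $\|\nabla v\|_{\LL_p(Q)}/\ell(Q)^{n/p-1}$. The only real technical issue is the bookkeeping of constants under rescaling $Q \to [0,1]^n$; the individual ingredients (the polynomial inverse inequality, $\LL_p$-boundedness of the moment projection $P_Q$, and Poincaré) are entirely classical, so I do not expect a serious obstacle anywhere.
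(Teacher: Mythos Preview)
Your proposal is correct. The paper does not actually supply a proof here (the corollary is simply attributed to~\cite{KK3}), but your decomposition $v = P_Q[v] + (v-P_Q[v])$, with Lemma~\ref{lb3} controlling the remainder and polynomial inverse estimates controlling $\diam P_Q[v](Q)$, is precisely the standard route and certainly what is intended. The only step you compressed is the passage from $\LL_p$-boundedness of $P_Q$ to a bound on $\|\nabla P_Q[v-c]\|_{\LL_p(Q)}$: this needs one more polynomial inverse (Markov-type) inequality, $\|\nabla P\|_{\LL_p(Q)}\le C\ell(Q)^{-1}\|P\|_{\LL_p(Q)}$ for $P$ of degree $\le k-1$, but that is again just rescaling plus equivalence of norms on a finite-dimensional space, so no new idea is required.
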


The above results can easily be adapted to give that $v \in
\CC_{0}(\R^n )$, the space of continuous functions on $\R^n$ that
vanish at infinity (see for instance \cite[Theorem 5.5]{Maly2}).

 Let $\Me^\beta$ be the space of all nonnegative
Borel measures~$\mu$ on~$\R^n$ such that
\begin{equation}
\label{mu1} | \! | \! |\mu | \! | \!
|_{\beta}=\sup_{I\subset\R^n}\ell(I)^{- \beta}\mu(I)<\infty,
\end{equation}
where the supremum is taken over all $n$--dimensional  cubic
intervals $I\subset\R^n$ and $\ell(I)$ denotes side--length
of~$I$. We need the following important strong-type estimates for
maximal functions (it was proved in \cite{KK15} based on classic
results of D.R.~Adams~\cite{Ad} and some new analog of the trace
theorem for Riesz potentials of Lorentz functions for the limiting
case $q=p$, see Theorems~0.2--0.4 and Corollary~2.1
in~\cite{KK15}).

\begin{ttt}[\cite{KK15}]\label{lb7}{\sl
Let $p\in(1, \infty)$, $k,l\in\{1,\dots,n\}$, $l\le k$,
$(k-l)p<n$. Then for any function $f\in \WW^{k}_{p,1}(\R^n)$ the
estimates
\begin{eqnarray}
\label{adcb3} \|\nabla^l f\|^p_{L_p(\mu)} \leq C| \! | \! |\mu |
\! | \! |_\beta\Vert \nabla^k
f\Vert^p_{\LL_{p,1}}\quad\forall\mu\in\Me^\beta,\\ \label{max1}
\int_0^\infty\H^{\beta}_\infty( \{x\in\R^n : \M \bigl(|\nabla^l
f|^p\bigr)(x)\ge t \})\,\dd t \le C\Vert \nabla^k
f\Vert^p_{\LL_{p,1}}
\end{eqnarray}
hold, where $\beta=n-(k-l)p$, the constant $C$ depends on $n,k,p$
only, and
$$
\M f(x)=\sup\limits_{r>0}  \dashint_{B(x,r)} \! |f(y)|\,\dd y
$$
is the usual Hardy--Littlewood maximal function of~$f$. }
\end{ttt}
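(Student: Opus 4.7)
The plan is to deduce both~(\ref{adcb3}) and~(\ref{max1}) from a~Lorentz-space refinement of D.R.~Adams' classical trace inequality for Riesz potentials, combined with the Choquet--Adams duality between the~class~$\Me^\beta$ and the~Hausdorff content~$\H^\beta_\infty$. As a~first step I~would reduce matters to a~Riesz potential estimate: by standard fractional integration by parts one obtains (for the~precise representative, after truncation against a~smooth bump if necessary) the pointwise bound
$$|\nabla^l f(x)|\le C\,I_{k-l}(|\nabla^k f|)(x),\qquad x\in\R^n,$$
where $I_\alpha g(x)=\int_{\R^n}|x-y|^{\alpha-n}g(y)\,\dd y$ is the~Riesz potential of order~$\alpha$ (the case $l=k$ being trivial). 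It therefore suffices to establish both inequalities with $|\nabla^l f|$ replaced by~$I_{k-l}(|\nabla^k f|)$.

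For~(\ref{adcb3}) I~would prove the Lorentz refinement of Adams' trace inequality: for $g\in\LL_{p,1}(\R^n)$, $0<\alpha p<n$, $\beta=n-\alpha p$, and $\mu\in\Me^\beta$,
$$\|I_\alpha g\|_{\LL_p(\mu)}^p\le C\,| \! | \! |\mu | \! | \! |_\beta\,\|g\|_{\LL_{p,1}}^p.$$
The strategy follows Adams' classical splitting of~$I_\alpha g$ into near and far parts with respect to a~chosen radius~$\delta$, controlling the far part directly via the~growth condition~(\ref{mu1}) and the~near part via the~Hardy--Littlewood maximal function. The passage from~$\LL_p$ to~$\LL_{p,1}$ is then achieved through a~level-set decomposition $g=\sum_{j\in\Z}g\cdot 1_{E_j}$ with $E_j=\{2^j<|g|\le 2^{j+1}\}$: Adams' classical $\LL_p$-inequality is applied to each summand, and the resulting bounds are summed using the~subadditivity Lemma~\ref{asr3} together with the representation~(\ref{lor1}).

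For~(\ref{max1}) I~would combine the~maximal-function inequality for Hausdorff content
$$\int_0^\infty \H^\beta_\infty\bigl(\{\M h>t\}\bigr)\,\dd t \le C\int_0^\infty \H^\beta_\infty\bigl(\{h>t\}\bigr)\,\dd t$$
(an~Orobitg--Verdera-type estimate, proved by a~Vitali covering argument on the~level sets of~$\M h$) with the Choquet--Adams duality
$$\int_0^\infty \H^\beta_\infty\bigl(\{u>t\}\bigr)\,\dd t \;\sim\; \sup\Bigl\{\int u\,\dd\mu :\ \mu\in\Me^\beta,\ | \! | \! |\mu | \! | \! |_\beta\le 1\Bigr\},$$
which is the~standard bridge between Hausdorff content and the~class~$\Me^\beta$. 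Applying these with $h=|\nabla^l f|^p$, the~chain of inequalities reduces~(\ref{max1}) directly to~(\ref{adcb3}).

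The main obstacle is the~Lorentz refinement in Step~2: it is genuinely sharper than the~classical Adams $\LL_p$-inequality, and the~summation of the~geometric series produced by the~level-set decomposition succeeds only at the~scale~$\LL_{p,1}$ (the~$\LL_{p,q}$ analog for~$q>1$ fails in general). The use of the~subadditivity Lemma~\ref{asr3}, together with the~specific representation~(\ref{lor1}) of the~$\LL_{p,1}$ norm, is therefore essential rather than a~technicality---which is precisely why the results of~\cite{KK15} are stated at this borderline Lorentz scale.
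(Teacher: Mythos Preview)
The paper does not give its own proof of this statement: Theorem~\ref{lb7} is quoted from~\cite{KK15}, with only the remark that the proof there is ``based on classic results of D.R.~Adams~\cite{Ad} and some new analog of the trace theorem for Riesz potentials of Lorentz functions for the limiting case $q=p$.'' Your outline---reduction to a Riesz potential via the pointwise bound $|\nabla^l f|\le C\,I_{k-l}(|\nabla^k f|)$, a Lorentz-scale refinement of Adams' trace inequality for~(\ref{adcb3}), and then the Choquet--Adams duality together with an Orobitg--Verdera-type maximal inequality for~(\ref{max1})---is exactly the architecture that remark points to, so there is nothing to compare against beyond confirming that your plan is consistent with the cited source.

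One caution on the execution of Step~2. The level-set decomposition $g=\sum_j g\cdot 1_{E_j}$ followed by the classical Adams $\LL_p$-inequality on each piece does \emph{not} by itself yield the $\LL_{p,1}$ bound: applying Adams to $g_j=g\cdot 1_{E_j}$ gives $\|I_\alpha g_j\|_{\LL_p(\mu)}\le C\,| \! | \! |\mu| \! | \! |_\beta^{1/p}\|g_j\|_{\LL_p}$, and summing the left-hand sides via the triangle inequality in $\LL_p(\mu)$ produces $\sum_j\|g_j\|_{\LL_p}$, which is comparable to $\|g\|_{\LL_{p,1}}$---so far so good. But you must be careful that the constant in Adams' inequality does not depend on the support or level of $g_j$; in practice the clean route (and the one taken in~\cite{KK15}) is to run Adams' near/far splitting argument directly on the full $g$, choosing the splitting radius $\delta=\delta(x)$ to balance the two pieces, and only at the final stage invoke the $\LL_{p,1}$ norm via the layer-cake representation~(\ref{lor1}) and Lemma~\ref{asr3}. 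Either way the endpoint $q=1$ in the Lorentz scale is essential, as you correctly note.
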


The result is true also for $p=1$, $k>l$ and is in this case due to
D.R.~Adams~\cite{Ad}.

For a subset $A$ of ${\R}^m$ and $\varepsilon>0$ the
$\varepsilon$--entropy of $A$, denoted by $\E(\varepsilon,A)$, is
the minimal number of closed balls of radius $\varepsilon$
covering $A$. Further, for a linear map~$L\colon \R^n\to\R^d$ we denote by
$\lambda_j(L)$, $j=1,\dots,d$, its singular values
arranged in decreasing order: \ $\lambda_1(L) \ge
\lambda_2(L) \ge\dots\ge\lambda_d(L)$. Geometrically the
singular values are the lengths of the semiaxes of the, possibly degenerate,
ellipsoid $L( \partial B(0,1))$. We recall that the singular values of $L$
coincide with the eigenvalues repeated according to multiplicity
of the symmetric nonnegative linear map~$\sqrt{LL^{\ast}}\colon
\R^d \to \R^d$. Also for a mapping~$f\colon \R^n\to\R^d$ that is
approximately differentiable at $x \in \R^n$
put~$\lambda_j(f,x)=\lambda_j(d_xf)$, where by $d_xf$ we denote
the  approximate differential of $f$ at~$x$. The next result is
the second basic ingredient of our proof.

\begin{ttt}[\cite{Yom}]\label{lb8}{\sl
For any polynomial $P\colon \R^n\to\R^d$ of degree at most $k$,
for each ball $B\subset \R^n$ of radius $r>0$, and any number
$\varepsilon >0$ we have that
$$
\E\bigl(\varepsilon r,\{P(x):x\in B,\ \lambda_1
\le1+\varepsilon,\dots,\lambda_{m-1} \le1+\varepsilon, \lambda_m
\le\varepsilon,\dots,\lambda_d \le\e\}\bigr)
$$
$$\le C_Y \bigl(1+\e^{1-m}\bigr),
$$
where the~constant $C_Y$ depends on $n,d,k,m$ only and for
brevity we wrote $\lambda_{j}= \lambda_{j}(P,x)$.}
\end{ttt}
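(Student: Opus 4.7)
The plan is to combine a local Taylor-type estimate for the polynomial $P$ with the semialgebraic structure of the near-critical set
\[
E=\{x\in B:\lambda_1(P,x)\le1+\e,\ldots,\lambda_{m-1}(P,x)\le1+\e,\ \lambda_m(P,x)\le\e,\ldots,\lambda_d(P,x)\le\e\}.
\]
After the affine change of variables $x\mapsto rx$ I may assume $r=1$ and work on the unit ball $B_1$. Since $P$ has degree at most $k$, the Markov--Bernstein inequality controls all $\CC^j$-norms of $P$ on $B_1$ by $\|P\|_{\LL_\infty(B_1)}$ for $j\le k+1$; after a further harmless normalization all relevant derivatives of $P$ are therefore bounded by a constant depending only on $n,d,k$.

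Next I would partition $B_1$ into a grid of cubes $Q_\alpha$ of side length $\rho\asymp\e$. For each cube with $Q_\alpha\cap E\ne\emptyset$ I pick $x_\alpha\in Q_\alpha\cap E$ and write $P(x)=P(x_\alpha)+dP(x_\alpha)(x-x_\alpha)+R_\alpha(x)$, with $|R_\alpha(x)|\le C\rho^2\lesssim\e\rho$ on $Q_\alpha$. The rank conditions on $dP(x_\alpha)$ force the linear part to map $Q_\alpha$ into a tube of thickness $\e\rho$ about an $(m-1)$-dimensional affine subspace, of tangential diameter $\lesssim\rho$. Combined with the remainder estimate this shows that $P(Q_\alpha\cap E)$ is contained in a tube of total thickness $\lesssim\e\rho$ and diameter $\lesssim\rho\asymp\e$; each such piece is thus covered by $O(1)$ balls of radius $\e r=\e$.

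The heart of the matter, and the main obstacle, is to count how many cubes $Q_\alpha$ actually meet $E$. A naive bound $\rho^{-n}\sim\e^{-n}$ is far too weak and would yield the wrong exponent; we need at most $O(\e^{-(m-1)})$ such cubes. This gain comes from the fact that $E$ is cut out by polynomial inequalities on the $m\times m$ minors of $\nabla P$, hence is a semialgebraic set whose complexity is bounded in terms of $n,d,k,m$ alone. Quantitative results in real algebraic geometry --- Gromov's algebraic lemma, or the cell-decomposition estimates of Oleinik--Petrovskii, Milnor and Thom --- then produce a uniformly bounded (in $n,d,k,m$) family of $\CC^k$-smooth parametrizations of $E$ by open subsets of $\R^{m-1}$ with controlled Lipschitz constants. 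Applying the local linearization above to each parametrized piece, rather than to an unstructured cube cover, restores the correct dimensional count $\e^{-(m-1)}$ and yields the bound $\E(\e r,P(E))\le C_Y\,\e^{1-m}$ in the regime $\e\le 1$. The additive constant $1$ in $C_Y(1+\e^{1-m})$ takes care of the complementary regime $\e\ge 1$, where $P(B)$ has diameter of order $r$ and is trivially covered by a bounded number of balls of radius $\e r$.
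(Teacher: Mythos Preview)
The paper does not prove this theorem; it is quoted from Yomdin~\cite{Yom} and used as a black box. So there is no proof in the paper to compare against, but your sketch can be measured against Yomdin's original argument, whose flavor you have correctly identified: the semialgebraic structure of $E$ and a Yomdin--Gromov type reparametrization lemma are indeed the essential ingredients. However, the way you assemble them has two genuine gaps.

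First, the ``harmless normalization'' is not harmless. The set $E$ is cut out by \emph{absolute} thresholds $1+\e$ and $\e$ on the singular values of $dP$; rescaling $P$ rescales those singular values and hence changes~$E$. The hypothesis $\lambda_1\le 1+\e$ bounds $|dP|$ only on $E$, not on the cubes $Q_\alpha$ on which you Taylor-expand, and it says nothing whatsoever about $|d^2P|$. Your remainder bound $|R_\alpha|\le C\rho^2$ with $C=C(n,d,k)$ is therefore unjustified. In Yomdin's approach this difficulty is absorbed into the reparametrization lemma: one covers $E$ by images of $C^k$ maps $\phi_i\colon[0,1]^{j_i}\to\R^n$ with $\|\phi_i\|_{C^k}\le 1$, and it is the compositions $P\circ\phi_i$ whose derivatives are then controlled, not $P$ itself via Markov--Bernstein.

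Second, and more seriously, the claim that $E$ can be parametrized by open subsets of $\R^{m-1}$ is false. Take $P=L$ linear of rank exactly $m-1$ with all nonzero singular values equal to~$1$; then $E=B$, which is $n$-dimensional. The Yomdin--Gromov lemma parametrizes a $j$-dimensional semialgebraic set by $[0,1]^j$, same dimension; it does not collapse $E$ down to dimension $m-1$. What is (approximately) $(m-1)$-dimensional is the \emph{image} $P(E)$, and that is exactly the content of the theorem, not a tool you may invoke to prove it. Yomdin obtains the exponent $1-m$ not by counting $\e$-cubes in the domain but by exploiting that on each reparametrized piece the composition $P\circ\phi_i$ itself has near-rank $\le m-1$, and then arguing inductively (or by projection and slicing) on the target side. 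Your domain cube-count cannot recover the correct exponent in the linear example above: there are $\sim\e^{-n}$ cubes meeting $E=B$, each mapped to a ball of radius $\lesssim\e$, and your argument provides no mechanism to account for the massive overlap of these images.
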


The application of Theorem~\ref{lb7} is facilitated through the
following simple estimate (see for instance Lemma~2 in \cite{Dor},
cf. with~\cite{BH}\,).

\begin{lem}\label{lb10} {\sl Let $u\in\WW^{1}_{1}(\R^n,\R^d)$. Then for
any ball $B\subset \R^n$ of radius $r>0$ and for any number
$\varepsilon >0$ the estimate
$$
\diam (\{u(x):x\in B,\ (\M \nabla u)(x)\le\varepsilon\})\le C_M
\varepsilon r
$$
holds, where $C_{M}$ is a constant depending on $n,d$ only.}
\end{lem}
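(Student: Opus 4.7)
The plan is to deduce this estimate from the standard pointwise inequality
$$
|u(x)-u(y)|\le C\,|x-y|\,\bigl(\M|\nabla u|(x)+\M|\nabla u|(y)\bigr),
$$
valid for every pair of Lebesgue points $x,y$ of any $u\in\WW^{1}_{1}(\R^n,\R^d)$. Once this pointwise bound is at hand, the lemma is immediate: for $x,y$ in the set $E=\{z\in B:(\M|\nabla u|)(z)\le\e\}$ we have $|x-y|\le 2r$ and both maximal values are at most $\e$, so $|u(x)-u(y)|\le 4C\e r$, which gives $\diam u(E)\le C_M\e r$ with $C_M=4C$ depending only on $n,d$.

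To establish the pointwise estimate I would use the classical telescoping argument. Fix Lebesgue points $x,y$ of $u$, set $\rho=|x-y|$, and denote $u_{B}:=\fint_B u$. Writing
$$
u(x)-u(y)=\bigl(u(x)-u_{B_0^x}\bigr)+\bigl(u_{B_0^x}-u_{B_0^y}\bigr)+\bigl(u_{B_0^y}-u(y)\bigr),
$$
with $B_0^x=B(x,2\rho)$, $B_0^y=B(y,2\rho)$, I would control the middle term by the $\LL_1$ Poincar\'e inequality applied to the common ball $B(\tfrac{x+y}{2},2\rho)$, and expand each outer term as a telescoping series over the dyadic balls $B_k^x=B(x,2\rho\cdot 2^{-k})$. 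Each consecutive difference $|u_{B_{k+1}^x}-u_{B_k^x}|$ is estimated, again via the $\LL_1$ Poincar\'e inequality, by $C(2\rho\cdot 2^{-k})\fint_{B_k^x}|\nabla u|\le C(2\rho\cdot 2^{-k})\,\M|\nabla u|(x)$. Summing the geometric series yields $|u(x)-u_{B_0^x}|\le C\rho\,\M|\nabla u|(x)$, and the symmetric bound for $y$ completes the proof.

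The only subtle point concerns the use of the precise representative~(\ref{lrule}): the telescoping sequence $\{u_{B_k^x}\}$ is Cauchy whenever $\M|\nabla u|(x)<\infty$, so each point of the set $E$ is automatically a Lebesgue point of $u$ and the precise value $u(x)$ coincides with $\lim_k u_{B_k^x}$. Consequently the pointwise inequality applies verbatim to every pair of points in $E$, and taking supremum over such pairs yields the asserted diameter bound. The argument is entirely routine; the only real ``obstacle'' is bookkeeping between the $\LL_1$ Poincar\'e inequality, the telescoping, and the convention at non--Lebesgue points, and even this is handled essentially automatically by the finiteness of $\M|\nabla u|$ on~$E$.
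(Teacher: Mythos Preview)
Your proof is correct and is exactly the standard argument; the paper does not supply its own proof but simply cites \cite{Dor} and \cite{BH}, and the pointwise inequality $|u(x)-u(y)|\le C|x-y|\bigl(\M|\nabla u|(x)+\M|\nabla u|(y)\bigr)$ that you derive by telescoping and the $\LL_1$ Poincar\'e inequality is precisely the Bojarski--Haj\l{}asz inequality of \cite{BH}. Your remark that finiteness of $\M|\nabla u|$ on $E$ forces the averages to converge, so that the precise representative~(\ref{lrule}) is well defined there, is the correct way to handle the only delicate point.
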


We need also the following approximation result.

\begin{ttt}[see Theorem~2.1 in \cite{KK15}]\label{Th_ap}{\sl Let $p\in(1, \infty)$,
$k,l\in\{1,\dots,n\}$, $l\le k$, $(k-l)p<n$. Then for any
$f\in\WW^{k}_{p,1}(\R^n)$ and for each $\varepsilon>0$ there exist
an open set $U\subset\R^n$ and a function $g\in \CC^l(\R^n)$ such
that

\begin{itemize}
\item[(i)] \,$\H^{n-(k-l)p}_\infty (U)<\varepsilon$;

\item[(ii)] \,each point $x\in\R^n\setminus U$ is
{an~$\LL_{p}$-Lebesgue point for~$\nabla^j f$,\, $j=0,\dots,l$;}

\item[(iii)] \,$f\equiv g$, $\nabla^jf \equiv \nabla^jg$ on
$\R^n\setminus U$ for $j=1,\dots,l$.
\end{itemize}
}
\end{ttt}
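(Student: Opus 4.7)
The plan is to prove this via a maximal-function-based Whitney extension argument, in the spirit of the classical Calderón--Zygmund approach to Sobolev approximation but keeping track of Hausdorff \emph{content} rather than capacity. Set $\beta = n-(k-l)p$, and let $F = |\nabla^k f|^p$. The first step is to apply the strong-type estimate (\ref{max1}) of Theorem~\ref{lb7}: since
\[
\int_0^\infty \H^{\beta}_\infty\bigl(\{x:\M F(x) > t\}\bigr)\,\dd t \;\le\; C\|\nabla^k f\|_{\LL_{p,1}}^p<\infty,
\]
we can choose a threshold $t_\varepsilon$ so large that $\sum_{j\ge j_\varepsilon}\H^{\beta}_\infty(\{\M F>2^j\})<\varepsilon/2$, and then define the candidate exceptional set
\[
U_0 \;=\; \{x\in\R^n:\M F(x)>t_\varepsilon\},
\]
which is open and has $\H^{\beta}_\infty(U_0)<\varepsilon/2$.

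Next I would handle the Lebesgue-point condition (ii). Standard capacity theory (or rather, a Hausdorff-content analog obtained again via Theorem~\ref{lb7} applied to truncated derivatives of $f-\varphi_\nu$ for a smooth approximating sequence $\varphi_\nu\to f$ in $\WW^k_{p,1}$) shows that the set $N$ of points that fail to be $\LL_p$-Lebesgue points of some $\nabla^j f$, $j\le l$, satisfies $\H^\beta_\infty(N)=0$; cover $N$ by an open set $U_1$ with $\H^\beta_\infty(U_1)<\varepsilon/2$, and set $U=U_0\cup U_1$. Outside $U$, the mean-oscillation of $\nabla^j f$ on every ball $B(x,r)$ is controlled, via the Poincaré-type inequality applied iteratively, by $C r^{k-j}(\M F(x))^{1/p}\le Ct_\varepsilon^{1/p}r^{k-j}$. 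In particular, at every $x\in\R^n\setminus U$ the Taylor polynomial
\[
T_x f(y) \;=\; \sum_{|\alpha|\le l}\frac{\nabla^\alpha f(x)}{\alpha!}(y-x)^\alpha
\]
is well-defined, and comparing $T_x f$ with $T_{x'}f$ for $x,x'\in\R^n\setminus U$ via the integral representation of the remainder yields the Whitney-type compatibility
\[
\bigl|\nabla^\alpha(T_xf - T_{x'}f)(x')\bigr|\;\le\;C\,t_\varepsilon^{1/p}\,|x-x'|^{l-|\alpha|+1},\qquad |\alpha|\le l.
\]

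With this in hand, the last step is an invocation of the Whitney extension theorem for the closed set $F_\varepsilon := \R^n\setminus U$ and the jet $\{T_x f\}_{x\in F_\varepsilon}$: one obtains $g\in\CC^l(\R^n)$ with $\nabla^\alpha g(x)=\nabla^\alpha f(x)$ for all $x\in F_\varepsilon$ and $|\alpha|\le l$, which gives (iii); (i) and (ii) were already arranged. I expect the most delicate step to be the second one, namely verifying the Whitney compatibility (and in particular the remainder decay) using only boundedness of $\M F$ in terms of \emph{Hausdorff content} rather than measure: the usual Poincaré-based argument needs to be combined carefully with the Lebesgue-point property from~(ii) so that $\nabla^\alpha f(x)$ can legitimately be replaced by the limit of its integral averages at every $x\in F_\varepsilon$, and the passage from the strong-type Choquet estimate of Theorem~\ref{lb7} to a pointwise bound on the Taylor remainder is what makes the argument specific to the Sobolev--Lorentz borderline regime $(k-l)p\le n$.
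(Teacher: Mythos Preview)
This theorem is not proved in the present paper; as the heading indicates, it is quoted from \cite{KK15} and used as a black box (see the passages after Corollary~\ref{cor00}, after Lemma~\ref{Dub-smooth}, and in the proof of Theorem~\ref{D-Coarea}). So there is no proof here to compare against directly. That said, your outline is the standard strategy and is, as far as I know, essentially what is carried out in \cite{KK15}: use the Choquet--type strong maximal inequality of Theorem~\ref{lb7} to isolate an open exceptional set of small $\H^{\beta}_\infty$--content, verify Whitney compatibility of the $l$--jet $\{T_xf\}$ on its complement, and invoke the $\CC^l$ Whitney extension theorem.

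Two details deserve attention. First, the exponent in your displayed compatibility estimate should be $k-|\alpha|$, not $l-|\alpha|+1$: the iterated Poincar\'e bound you yourself quote, namely $Cr^{k-j}\bigl(\M F(x)\bigr)^{1/p}$ for the oscillation of $\nabla^j f$, yields $\bigl|\nabla^\alpha(T_xf-T_{x'}f)(x')\bigr|\le C\,t_\varepsilon^{1/p}\,|x-x'|^{k-|\alpha|}$. For $k>l$ this is at least as strong as what you wrote, so no harm there. Second, and more substantively, the case $k=l$ (which is exactly the case the paper invokes in each of the applications just cited) is not covered by this argument: when $|\alpha|=l=k$ the exponent $k-|\alpha|$ vanishes, and boundedness of $\M|\nabla^k f|^p$ on the good set gives no modulus of continuity for $\nabla^l f$ there, so the Whitney hypotheses for a $\CC^l$ extension fail at top order. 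This is where the smooth approximants $\varphi_\nu$ you mention must actually enter: one thresholds $\M\bigl|\nabla^k(f-\varphi_\nu)\bigr|^p$ with $\varphi_\nu$ close to $f$ in $\WW^k_{p,1}$, rather than $\M|\nabla^k f|^p$, so that on the complement of $U$ the $l$--jet of $f$ is uniformly close to that of the smooth $\varphi_\nu$ and $\nabla^l f$ inherits a genuine modulus of continuity from $\nabla^l\varphi_\nu$. With that modification the argument goes through for all $l\le k$.
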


Note that in the analogous theorem for the case of Sobolev
mappings $f\in\WW^{k}_{p}(\R^n)$ the assertion (i) should be
replaced by
\bigskip

(i') $\mathcal B_{k-l,p}(U)<\varepsilon$ if $l<k$,
\bigskip

\noindent
where $\mathcal{B}_{\alpha,p}(U)$ denotes the Bessel
capacity of the set~$U$ (see, e.g., Chapter~3 in \cite{Ziem} or
\cite{BHS}\,).

Recall that for $1<p<\infty$ and $0<n - \alpha p <n$ the smallness
of $\H^{n - \alpha p}_\infty(U)$ implies the smallness of
$\Be_{\alpha,p}(U)$, but that the opposite is false since
$\Be_{\alpha,p}(U)=0$ whenever $\H^{n - \alpha p}(U)<\infty$. On
the other hand, for $1<p<\infty$ and $0<n - \alpha p < \beta\le n$
the smallness of $\Be_{\alpha,p}(U)$ implies the smallness of
$\H^\beta_\infty(U)$ (see, e.g., \cite{Aikawa}). So the usual
assertion (i') is essentially weaker than~(i).

\section{Luzin $N$- and Morse--Sard properties for Sobolev--Lorentz
mappings}\label{MSR}

In this section we briefly recall some theorems
from~\cite{KK3,KK15} which we need. The following result is an
analog of the Luzin $N$-property with respect to the Hausdorff
content.

\begin{ttt}[\cite{KK3,KK15}]\label{th3.3}{\sl
Let $k\in\{1,\dots,n\}$, \,$q\in[\p,n]$, and $v\in
\WW^{k}_{\p,1}(\R^n,\R^d)$. Then for each $\varepsilon>0$ there
exists $\delta>0$ such that for any set $E\subset\R^n$ \ if \
$\H^{q}_\infty(E)<\delta$, then $\H_\infty^{q}(v(E))<\varepsilon$.
In particular, $\H^{q}(v(E))=0$ whenever $\H^{q}(E)=0$.}
\end{ttt}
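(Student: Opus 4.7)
The plan is to establish Theorem~\ref{th3.3} by a covering argument that combines the diameter estimate in Corollary~\ref{me1} with the Choquet-type bound for the Hardy--Littlewood maximal function in Theorem~\ref{lb7}. Given $E$ with $\H^q_\infty(E) < \delta$, first find a near-optimal cover $E \subset \bigcup_i Q_i$ by cubes with $\sum_i \ell(Q_i)^q < 2\delta$ and with every $\ell(Q_i)$ small (by subdividing each cube in the original cover and discarding those disjoint from $E$). Since $\H^q_\infty(v(E)) \le \sum_i (\diam v(Q_i))^q$, it suffices to show that this sum can be made less than $\varepsilon$ by choosing $\delta$ and the maximal side-length small enough.

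To estimate each $\diam v(Q_i)$ I would apply Corollary~\ref{me1} with $p = \p$, and refine the cover by a Vitali-type selection so that each chosen cube is calibrated at some point $x_i \in E \cap Q_i$, meaning $\|\nabla v\|_{\LL_\p(Q_i)}^\p \le C \ell(Q_i)^n \M(|\nabla v|^\p)(x_i)$. Using $n/\p = k$ this yields
$$\diam v(Q_i) \le C \ell(Q_i) \bigl[\M(|\nabla v|^\p)(x_i)\bigr]^{1/\p} + C \|1_{Q_i} \nabla^k v\|_{\LL_{\p,1}}.$$
Raising to the $q$-th power and using $(a+b)^q \le 2^{q-1}(a^q+b^q)$, the sum splits into two pieces. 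For the first piece $\sum_i \ell(Q_i)^q \M(|\nabla v|^\p)(x_i)^{q/\p}$, I would truncate the maximal function at a large level $T$: cubes whose centers satisfy $\M(|\nabla v|^\p)(x_i)>T$ are absorbed into the \emph{bad} set $B_T = \{\M(|\nabla v|^\p) > T\}$, whose image $v(B_T \cap E)$ is controlled by the case $q = \p$ of the theorem together with the Choquet tail bound $\int_T^\infty \H^\p_\infty(\{\M(|\nabla v|^\p) > t\})\,\dd t \to 0$ from Theorem~\ref{lb7}; the remaining cubes contribute at most $T^{q/\p} \sum_i \ell(Q_i)^q \le 2T^{q/\p}\delta$, which is made small by choosing $\delta$ small after $T$ has been fixed. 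For the second piece, Lemma~\ref{asr3} controls $\sum_i \|1_{Q_i} \nabla^k v\|_{\LL_{\p,1}}^\p \le C \|\nabla^k v\|_{\LL_{\p,1}}^\p$ on an essentially disjoint sub-cover, so factoring out $(\max_i \|1_{Q_i} \nabla^k v\|_{\LL_{\p,1}})^{q-\p}$ (which is small by absolute continuity of the Lorentz norm once the cubes are small) gives the needed bound.

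The main obstacle is treating the entire range $q\in[\p,n]$ uniformly: Theorem~\ref{lb7} is most naturally an $\H^\p_\infty$ statement, and Lemma~\ref{asr3} matches naturally with the exponent~$\p$, while we need $\H^q_\infty$-control with $q\ge \p$. The crux is that for the \emph{bad} set $B_T$ one only has an $\H^\p_\infty$-bound from Theorem~\ref{lb7}, which does not automatically upgrade to an $\H^q_\infty$-bound on $v(B_T \cap E)$; the uniformity in $q$ has to come from an iterative or bootstrapping application of the $q = \p$ result combined with the smallness of the sizes of the selected cubes, and the factoring trick for the Lorentz-norm term must be executed carefully so that the $q$--$\p$ discrepancy is absorbed into quantities that vanish with $\delta$. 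Once this calibration is set up, the conclusion that $\H^q(v(E)) = 0$ whenever $\H^q(E)=0$ follows by the standard $\varepsilon$--$\delta$ argument applied to coverings of arbitrarily small diameter.
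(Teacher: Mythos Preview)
The paper does not prove Theorem~\ref{th3.3}; it is quoted from \cite{KK3,KK15}.  But the argument implicit in the paper's toolkit (and carried out almost verbatim for the more general Theorem~\ref{LDST-q}) differs from yours in one decisive respect: after covering $E$ by cubes with $\sum_\alpha\ell(Q_\alpha)^q<\delta$ one passes, via Lemmas~\ref{lemD} and~\ref{lb5.1}, to a \emph{regular} family of dyadic intervals and then applies Lemma~\ref{Thh3.3} with the parameter $p=q$.  Combined with Corollary~\ref{me1} taken with $p=q$ (not $p=\p$) this yields directly
\[
\sum_\alpha\bigl(\diam v(Q_\alpha)\bigr)^q
\;\le\; C\sum_\alpha\Bigl[\|1_{Q_\alpha}\nabla^k v\|_{\LL_{\p,1}}^{\,q}
+\ell(Q_\alpha)^{-(n-q)}\!\int_{Q_\alpha}|\nabla v|^{q}\Bigr]<C\varepsilon,
\]
so no truncation, no bad set, and no bootstrapping from the case $q=\p$ are needed.

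Your proposal has a genuine gap precisely at the bad--set step.  You want to control $v(B_T\cap E)$ by ``the case $q=\p$ of the theorem'', but your scheme does not establish that case either: the same truncation leaves a residual set whose image must again be estimated, and nothing decreases.  More structurally, your first piece is $\int M^{q/\p}\,\dd\nu$ for the discrete measure $\nu=\sum_i\ell(Q_i)^q\delta_{x_i}$, and the Choquet bound~(\ref{max1}) can be transferred to this integral only if $\nu$ obeys a Frostman--type condition $\nu(I)\le C\ell(I)^q$ for all dyadic $I$; this is exactly the regularity hypothesis~(\ref{q8}), which you never impose.  Without it the bad--set contribution cannot be made small in terms of~$\delta$.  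Likewise, your Lorentz--norm piece requires disjointness for Lemma~\ref{asr3}, which your Vitali sketch does not deliver (the $5$-times dilates are the covering family, and they overlap).  The two missing ingredients---the regular--family reduction and the choice $p=q$ in both Corollary~\ref{me1} and Lemma~\ref{Thh3.3}---are what make the $q$--$\p$ mismatch you flag as the ``main obstacle'' disappear entirely.
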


The next asertion is the precise analog of the
Dubovitski\u{\i}--Federer theorem~B (see
Introduction~\ref{Introd}\,) which includes the Morse--Sard
result.

\begin{ttt}[\cite{KK3,KK15}]
\label{MS} {\sl  If $k,m\in\{1,\dots,n\}$, $\Omega$ is an open
subset of $\R^n$, and $v\in \WW^{k}_{\p,1,{\rm loc}}(\Omega
,\R^d)$, then $\H^\b(v(Z_{v,m}))=0$.}
\end{ttt}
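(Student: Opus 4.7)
The plan is to split $Z_{v,m}$ into the nondifferentiable part $Z_{v,m}\cap A_v$ (handled by the Luzin $N$-property) and the differentiable part (handled by Yomdin's entropy estimate applied to averaged Taylor polynomials, summed via the capacitary maximal inequality). By a standard cutoff argument we may assume $\Omega=\R^n$ and $v\in\WW^{k}_{\p,1}(\R^n,\R^d)$ is compactly supported, and a simple computation shows $\b-\p=\frac{(m-1)(k-1)}{k}\ge 0$, so all Hausdorff exponents we encounter are $\ge\p$.

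For the nondifferentiable part: since $\H^\p(A_v)=0$, Theorem~\ref{th3.3} gives $\H^\p(v(A_v))=0$, and because $\b\ge\p$, also $\H^\b(v(A_v))=0$. It remains to prove $\H^\b\bigl(v(Z_{v,m}\setminus A_v)\bigr)=0$. To localise the argument, apply Theorem~\ref{Th_ap} with $l=k-1$ to obtain, for each $\varepsilon>0$, an open set $U$ with $\H^\p_\infty(U)<\varepsilon$ and $v$ agreeing with some $g\in \CC^{k-1}$ in value and derivatives of order $\le k-1$ on $\R^n\setminus U$. The image of $Z_{v,m}\cap U$ is controlled by Theorem~\ref{th3.3} (continuity of Hausdorff content under $v$); the remaining set lies where one can work with a smooth representative.

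For $x_0\in Z_{v,m}\setminus(A_v\cup U)$ and a small cube $Q\ni x_0$ of side $r$, take $P_Q=P_{Q,k-1}[v]$; by Lemma~\ref{lb3},
\begin{equation*}
\sup_{Q}|v-P_Q|\le C\|1_Q\cdot\nabla^k v\|_{\LL_{\p,1}}.
\end{equation*}
Since $\rank \nabla v(x_0)<m$, we have $\lambda_m(P_Q,x_0)=0$; smoothness of $P_Q$ plus the Lebesgue-point property of $\nabla v$ at $x_0$ lets us pick a scale $\varepsilon=\varepsilon(Q)$ so that Yomdin's hypothesis $\lambda_1,\dots,\lambda_{m-1}\le 1+\varepsilon$, $\lambda_m,\dots,\lambda_d\le\varepsilon$ is satisfied on all of $Q$. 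Theorem~\ref{lb8} then covers $P_Q(Q)$ by $\lesssim\varepsilon^{1-m}$ balls of radius $\varepsilon r$, and, absorbing the tail $\|1_Q\nabla^k v\|_{\LL_{\p,1}}$ into the radius, each cube contributes to $\H^\b_\infty(v(Z_{v,m}\cap Q))$ at most
\begin{equation*}
C\,\varepsilon^{1-m}(\varepsilon r)^\b \;=\; C\,\varepsilon^{(n-m+1)/k}\,r^\b,
\end{equation*}
the identity $1-m+\b=(n-m+1)/k$ being exactly what is needed.

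To sum these local contributions, use a Vitali-type selection on a Whitney-like decomposition adapted to the level sets of $\M(|\nabla^k v|^\p)$, and apply the strong-type inequality~(\ref{max1}) and the trace inequality~(\ref{adcb3}) (both from Theorem~\ref{lb7}) together with the subadditivity of the Lorentz norm (Lemma~\ref{asr3}). This bounds the total by a multiple of $\|1_{\{\M(|\nabla^k v|^\p)>\lambda\}}\nabla^k v\|_{\LL_{\p,1}}^\p$, which tends to zero as $\lambda\to\infty$; the remaining level set where $\M(|\nabla^k v|^\p)\le\lambda$ can be covered by cubes of arbitrarily small side, forcing the contribution there to zero as well. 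The main obstacle is precisely this last matching of parameters: one must calibrate $\varepsilon(Q)$ against $\ell(Q)$ and the local quantity $\|1_Q\nabla^k v\|_{\LL_{\p,1}}$ so that Yomdin's rank condition holds on all of $Q$ (needing a quantitative modulus-of-continuity for the singular values of $\nabla P_Q$ coming from Taylor estimates in $\LL_{\p,1}$), \emph{while} the aggregate $\b$-content bound still telescopes through the Lorentz-capacitary inequality. It is exactly at this endpoint that the refined $\LL_{\p,1}$ scale—as opposed to $\LL_\p$—is used decisively.
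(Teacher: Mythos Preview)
This theorem is only cited here from \cite{KK3,KK15}, not proved; the argument there (and its generalization in Lemma~\ref{lb11} of this paper) does use the tools you list, so the outline is in the right family. But the step where you verify Yomdin's hypothesis contains a genuine error, and the closing paragraph is an admission of incompleteness rather than a proof.

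You claim $\lambda_m(P_Q,x_0)=0$ because $\rank\nabla v(x_0)<m$. This is false: $P_Q=P_{Q,k-1}[v]$ is the \emph{averaged} polynomial of Lemma~\ref{lb3}, so $\nabla P_Q(x_0)\neq\nabla v(x_0)$; writing $v_Q=v-P_Q$ one only gets $\lambda_m(P_Q,x)\le|\nabla v_Q(x)|$ for $x\in Z_{v,m}$. More seriously, Theorem~\ref{lb8} requires the singular-value bounds at \emph{every} point of the set whose image is to be covered, and no Lebesgue-point argument at a single $x_0$ delivers that. The device that actually works (see the proof of Lemma~\ref{lb11}) is structurally different: fix \emph{one} interval $I$ and \emph{one} polynomial $P_I$, and stratify $I$ by the level sets $E_j=\{x:\M(|\nabla v_I|^\p)(x)\in(2^{j-1},2^j]\}$ of the maximal function of the \emph{first} gradient of the remainder. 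On $Z'_v\cap E_j$ (these are $\LL_\p$-Lebesgue points of $\nabla v$, hence of $\nabla v_I$) one has $|\nabla v_I|\le 2^{j/\p}$ pointwise, and since $\lambda_m(v,\cdot)=0$ there, $\lambda_m(P_I,\cdot)\le 2^{j/\p}$ uniformly; this is what feeds Yomdin with $\varepsilon_j=2^{j/\p}$ on each covering ball of $E_j$. Theorem~\ref{lb7} with $l=1$ then yields $\sum_j 2^j\H^\p_\infty(E_j)\le C\|\nabla^k v_I\|_{\LL_{\p,1}}^\p$, which closes the sum. Your proposal instead stratifies by $\M(|\nabla^k v|^\p)$; that quantity controls the sup-norm remainder $|v-P_Q|$ via Lemma~\ref{lb3}, but not the singular values of $\nabla P_Q$, which is what Yomdin actually needs. (A minor side point: by definition $Z_{v,m}\subset\R^n\setminus A_v$, so your initial splitting $Z_{v,m}\cap A_v$ is empty.)
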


Recall that in our notation
\begin{equation}\label{dd1}
\p=\frac{n}{k},\quad s =n-m-k+1, \quad \b=m+\frac{s}k=\p+(m-
1)\bigl(1-k^{-1}\bigr),
\end{equation}
and $Z_{v,m}=\{x\in\Omega\,:\,\rank \nabla v(x)<m\}$.

Finally, here we recall some differentiability properties of
Sobolev--Lorentz functions.

\begin{ttt}[\cite{KK3,KK15}]
\label{Th_dif} {\sl  Let $k\in\{1,\dots,n\}$ and
$v\in\WW^{k}_{\p,1}(\R^n,\R^d)$. Then there exists a Borel set
$A_v\subset\R^n$ such that $\H^\p(A_v)=0$ and for any $x\in
\R^n\setminus A_v$ the function $v$ is differentiable (in the
classical Fr\'{e}chet sense) at $x$, furthermore,  the classical
derivative coincides with $\nabla v(x)$ \ ($x$ is
an~$\LL_{\p}$-Lebesgue point for~$\nabla v$). }
\end{ttt}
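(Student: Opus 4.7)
The plan is to use the $\LL_\p$-Lebesgue-point approximation theorem (Theorem~\ref{Th_ap}) combined with the diameter bound of Corollary~\ref{me1}. First, I apply Theorem~\ref{Th_ap} with $l=1$ and $p=\p$, noting that $n-(k-l)p = n-(k-1)n/k = \p$, and with $\varepsilon = 2^{-j}$. This yields, for each $j \in \N$, an open set $U_j \subset \R^n$ with $\H^\p_\infty(U_j) < 2^{-j}$ and a function $g_j \in \CC^1(\R^n,\R^d)$ such that $v \equiv g_j$ and $\nabla v \equiv \nabla g_j$ on $\R^n \setminus U_j$, and every point of $\R^n \setminus U_j$ is an $\LL_\p$-Lebesgue point of $\nabla v$. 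Setting $A_v := \bigcap_{j \in \N} U_j$, I obtain a Borel set with $\H^\p_\infty(A_v)=0$ and hence $\H^\p(A_v)=0$.

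Fix $x \in \R^n \setminus A_v$ and choose $j_0$ with $x \notin U_{j_0}$. Writing $g := g_{j_0}$, I have $v(x)=g(x)$, $\nabla v(x) = \nabla g(x)$, and $x$ is an $\LL_\p$-Lebesgue point of $\nabla v$. The candidate differential is the affine map $L(y) := v(x)+\nabla v(x)(y-x) = g(x)+\nabla g(x)(y-x)$. Since $g \in \CC^1$, $|g(y)-L(y)| = o(|y-x|)$ uniformly for $y$ near $x$, so the estimate $|v(y)-L(y)| = o(|y-x|)$ is immediate on $\R^n \setminus U_{j_0}$. It remains to control $v-L$ at points $y \in U_{j_0}$ close to $x$.

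For such $y$, choose a cube $Q_y$ of sidelength $\ell(Q_y) \simeq |y-x|$ containing both $y$ and $x$ (the anchor $x$ is available because $x \notin U_{j_0}$). Setting $u := v-L \in \WW^k_{\p,1}(\R^n,\R^d)$, so that $u(x)=0$ and $\nabla u(x) = 0$ in the Lebesgue sense, Corollary~\ref{me1} gives
\begin{equation*}
|u(y)| \le \diam u(Q_y) \le C\Bigl(\tfrac{\|\nabla u\|_{\LL_\p(Q_y)}}{\ell(Q_y)^{k-1}} + \|1_{Q_y} \cdot \nabla^k v\|_{\LL_{\p,1}}\Bigr).
\end{equation*}
The $\LL_\p$-Lebesgue-point property yields $\int_{Q_y}|\nabla v-\nabla v(x)|^\p = o(\ell(Q_y)^n)$, so, using $n/\p = k$, the first term on the right is $o(\ell(Q_y)) = o(|y-x|)$.

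The main obstacle is controlling the second term at the required rate, since the absolute continuity of the Lorentz norm alone only yields $\|1_{Q_y} \nabla^k v\|_{\LL_{\p,1}} = o(1)$ as $\ell(Q_y) \to 0$, whereas I need $o(|y-x|)$. To secure this at $\H^\p$-almost every $x$, I invoke the strong-type maximal inequality~(\ref{max1}) of Theorem~\ref{lb7}: it bounds the $\H^\p_\infty$-content of superlevel sets of $\M(|\nabla^k v|^\p)$ in terms of $\|\nabla^k v\|_{\LL_{\p,1}}^\p$, and a standard subsequence argument (applying Theorem~\ref{Th_ap} with $\varepsilon \to 0$) shows that the set of points where the desired density decay fails is $\H^\p$-negligible and can be absorbed into~$A_v$. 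Once both terms in the bound are $o(|y-x|)$, Fréchet differentiability of $v$ at $x$ with derivative $\nabla v(x)$ follows, which completes the proof.
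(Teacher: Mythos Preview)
First, note that the paper does not supply a proof of Theorem~\ref{Th_dif}: it is simply quoted from \cite{KK3,KK15} in Section~\ref{MSR}, so there is no ``paper's proof'' to compare against. I can therefore only comment on the internal soundness of your attempt.

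Your set-up via Theorem~\ref{Th_ap} with $l=1$ is correct and produces the right exceptional set $A_v=\bigcap_j U_j$ with $\H^{\p}(A_v)=0$, and the treatment of the first term in Corollary~\ref{me1} using the $\LL_{\p}$--Lebesgue point property is fine. The gap is exactly where you flag it: the second term $\|1_{Q_y}\nabla^{k}v\|_{\LL_{\p,1}}$. You need this to be $o(|y-x|)$ at every $x\in\R^n\setminus A_v$, but the justification you give does not deliver this. In~(\ref{max1}) the exponent on the left is $\beta=n-(k-l)\p$; to obtain $\beta=\p$ one must take $l=1$, and then the maximal function on the left is $\M\bigl(|\nabla^{1}v|^{\p}\bigr)$, not $\M\bigl(|\nabla^{k}v|^{\p}\bigr)$ as you write. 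Taking $l=k$ gives $\beta=n$, i.e.\ only Lebesgue--measure control, which is far too coarse for an $\H^{\p}$--exceptional set. Moreover, even finiteness of $\M\bigl(|\nabla^{k}v|^{\p}\bigr)(x)$ would only bound local $\LL_{\p}$--averages of $\nabla^{k}v$, which does not control the stronger localized Lorentz norm $\|1_{Q}\nabla^{k}v\|_{\LL_{\p,1}}$; and the subadditivity of Lemma~\ref{asr3} gives $\sum_\alpha\|1_{Q_\alpha}\nabla^{k}v\|_{\LL_{\p,1}}^{\p}\le\|\nabla^{k}v\|_{\LL_{\p,1}}^{\p}$, which is again only an $\H^{n}$--type bound on the ``bad'' set. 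The appeal to Theorem~\ref{Th_ap} ``with $\varepsilon\to0$'' does not help either: with $l=k$ its exceptional set is merely small in Lebesgue measure, and with $l=1$ it says nothing about $\nabla^{k}v$.

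In short, the second term is the whole difficulty, and your argument for it is not a proof but a hope. The route taken in \cite{KK15} does not pass through a pointwise decay estimate for $\|1_{Q}\nabla^{k}v\|_{\LL_{\p,1}}$; it combines the maximal estimate~(\ref{max1}) for $\M\bigl(|\nabla v|^{\p}\bigr)$ (correct index $l=1$) with the polynomial approximation of Lemma~\ref{lb3} in a more delicate way, so that the Lorentz term enters only through a global Choquet integral rather than a pointwise $o(r)$ requirement. If you want to repair your approach, that is the place to look.
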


Really the last assertion of the Theorem~--- that $\H^{\p}$-almost
all points $x\in\R^n$ are the $\LL_{\p}$-Lebesgue points for the
gradient~$\nabla v$~--- follows from Theorem~\ref{Th_ap}~(ii).

The case $k=1$, $\p =n$ of the Theorem~\ref{Th_dif} is a classical
result due to Stein \cite{St} (see also \cite{Maly1}), and for
$k=n$, $\p=1$ the result is due to Dorronsoro~\cite{Dor}.

Theorem~\ref{Th_dif} admits the following generalization.

\begin{ttt}[\cite{KK3,KK15}] \label{Th_dif2} {\sl  Let
$k,l\in\{1,\dots,n\}$, $l\leq k$,  and
$v\in\WW^{k}_{\p,1}(\R^n,\R^d)$. Then there exists a Borel set
$A_{v,l}\subset\R^n$ such that $\H^{l\p}(A_{v,l})=0$ and each
point $x\in \R^n\setminus A_{v,l}$ is an~$\LL_{p}$-Lebesgue point
for~$\nabla^j f$,\, $j=0,\dots,l$, moreover, the function $v$ is
$l$-times differentiable (in the classical Fr\'{e}chet--Peano
sense) at $x$, i.e.,
$$
\lim\limits_{r\searrow 0} \sup\limits_{y\in
B(x,r)\setminus\{x\}}\frac{\bigl|v(y)-T_{v,l,x}(y)\bigr|}{|x-y|^l}=0,
$$
where $T_{v,l,x}(y)$ is the Taylor polynomial of order $l$ for $v$
centered at~$x$.}
\end{ttt}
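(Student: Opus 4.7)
The plan is to bootstrap the first--order Fr\'echet differentiability of Theorem~\ref{Th_dif} into higher--order Peano differentiability by applying the smooth approximation result Theorem~\ref{Th_ap} iteratively with vanishing~$\varepsilon$, and then piecing together the Taylor--Peano expansion of $v$ from $\CC^l$ smooth approximants whose derivatives up to order $l$ coincide with those of $v$ outside successively smaller exceptional sets. Concretely, for each $j\in\N$ I would apply Theorem~\ref{Th_ap} with $p=\p$, the given $k,l$, and $\varepsilon=2^{-j}$; since $n-(k-l)\p=l\p$ for $p=\p$, this produces open sets $U_j\subset\R^n$ with $\H^{l\p}_\infty(U_j)<2^{-j}$ and mappings $g_j\in\CC^l(\R^n,\R^d)$ such that $\nabla^iv\equiv\nabla^ig_j$ on $\R^n\setminus U_j$ for every $i=0,1,\dots,l$, with every point of $\R^n\setminus U_j$ being an $\LL_\p$--Lebesgue point of $\nabla^iv$.

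Define $A_{v,l}:=\bigcap_{N\in\N}\bigcup_{j\ge N}U_j$; it is Borel, and by countable subadditivity of the Hausdorff content $\H^{l\p}_\infty(A_{v,l})\le\sum_{j\ge N}2^{-j}\to0$ as $N\to\infty$, hence $\H^{l\p}(A_{v,l})=0$. For $x\notin A_{v,l}$ pick $N_0=N_0(x)$ with $x\notin U_j$ for all $j\ge N_0$; at such $x$ the classical derivatives $\nabla^iv(x)$ are well defined via the Lebesgue point property and coincide with $\nabla^ig_j(x)$, so $T_{v,l,x}=T_{g_j,l,x}$ for every $j\ge N_0$. Setting $h_j:=v-g_j\in\WW^k_{\p,1}(\R^n,\R^d)$, which vanishes identically together with all its derivatives $\nabla^ih_j$ ($i=0,1,\dots,l$) on $\R^n\setminus U_j$, I would split the Peano remainder as
\begin{equation*}
v(y)-T_{v,l,x}(y)\;=\;\bigl(g_j(y)-T_{g_j,l,x}(y)\bigr)\;+\;h_j(y).
\end{equation*}
The first summand is $o(|x-y|^l)$ as $y\to x$ by the $\CC^l$ regularity of $g_j$, and the case $y\notin U_j$ of the second summand is trivial since $h_j(y)=0$.

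The main obstacle is therefore to prove $|h_j(y)|=o(|x-y|^l)$ uniformly as $y\to x$ in the remaining case $y\in U_j$. For this I would apply Lemma~\ref{lb3} to $h_j$ on a cube $Q$ centered near $x$ with sidelength comparable to~$|x-y|$, obtaining $\sup_Q|h_j-P_Q[h_j]|\le C\,\|1_Q\cdot\nabla^kh_j\|_{\LL_{\p,1}}$. The vanishing of $h_j$ on the open set $\R^n\setminus\overline{U_j}$ forces $\nabla^kh_j$ to be concentrated on $\overline{U_j}$ up to a null set, so the right--hand side is controlled via the smallness of $\H^{l\p}_\infty(U_j)$ together with the strong--type maximal function bound of Theorem~\ref{lb7} (applied to $h_j$) and the Lorentz norm subadditivity Lemma~\ref{asr3}. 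On the other hand, the $\LL_\p$--Lebesgue point property of $\nabla^iv$ at~$x$ (for $i\le l$) combined with the moment--matching definition of $P_Q[h_j]$ and the vanishing $\nabla^ih_j(x)=0$ forces the coefficients of $P_Q[h_j]$ of degree $\le l$ centered at~$x$ to decay at the correct rate in $\ell(Q)$, from which $|P_Q[h_j](y)|=o(|x-y|^l)$. Executing these two estimates simultaneously with a judicious choice of $j$ depending on $|x-y|$ is the technical heart of the argument, and is precisely where the Lorentz refinement $\LL_{\p,1}$ rather than $\LL_\p$ becomes indispensable for the borderline integrability exponent $p=\p=n/k$.
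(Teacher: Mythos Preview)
The paper does not prove this theorem; it is merely recalled from \cite{KK3,KK15} without argument, so there is no in-paper proof to compare against. Judged on its own merits, your proposal has a genuine gap at its ``main obstacle'' step.

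You apply Lemma~\ref{lb3} to $h_j=v-g_j$ after asserting $h_j\in\WW^{k}_{\p,1}(\R^n,\R^d)$. But Theorem~\ref{Th_ap} only yields $g_j\in\CC^{l}(\R^n,\R^d)$, not $g_j\in\CC^{k}$ or $g_j\in\WW^{k}_{\p,1}$; for $l<k$ the Whitney-type construction behind Theorem~\ref{Th_ap} uses only the data $\nabla^{i}v$, $i\le l$, on $\R^n\setminus U_j$ and gives no control whatsoever on $\nabla^{i}g_j$ for $i>l$. Consequently $\nabla^{k}h_j$ need not lie in $\LL_{\p,1}$ (it need not even be locally integrable), Lemma~\ref{lb3} is unavailable for $h_j$, and the bound $\sup_Q|h_j-P_Q[h_j]|\le C\|1_Q\cdot\nabla^{k}h_j\|_{\LL_{\p,1}}$ on which your argument rests is simply not defined. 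The follow-up claim that this quantity is ``controlled via the smallness of $\H^{l\p}_\infty(U_j)$ together with Theorem~\ref{lb7} and Lemma~\ref{asr3}'' inherits the same defect and is in any case not a mechanism that turns Hausdorff content of a support into an $\LL_{\p,1}$ bound. The $P_Q[h_j]$ step has the same problem in disguise: since $P_Q=P_{Q,k-1}$, its coefficients of degree $i\in\{l+1,\dots,k-1\}$ involve moments of $\nabla^{i}h_j$, hence of the uncontrolled $\nabla^{i}g_j$.

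A route that avoids this obstruction is to dispense with $h_j$ and work with $v$ directly: on a cube $Q\ni x$ with $\ell(Q)\sim|x-y|$ split
\[
v(y)-T_{v,l,x}(y)=\bigl(v(y)-P_{Q,l}[v](y)\bigr)+\bigl(P_{Q,l}[v](y)-T_{v,l,x}(y)\bigr),
\]
and use the potential/maximal estimates of Theorem~\ref{lb7} with $\beta=n-(k-l)\p=l\p$ (together with the $\LL_\p$-Lebesgue point property of $\nabla^{i}v$, $i\le l$, from Theorem~\ref{Th_ap}(ii)) to show that both terms are $o(\ell(Q)^{l})$ for all $x$ outside an $\H^{l\p}$-null set. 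This is the line of argument in the cited references, and it never requires $k$-th order information about an auxiliary $\CC^{l}$ approximant.
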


Note that the Taylor polynomial of order $l$ for $v$ centered
at~$x$ is well defined $\H^{l\p}$-a.e. by Theorem~\ref{Th_ap}.

\section{Proofs of the main results}

\subsection{Proof of the Luzin type~$N$-property}

In this subsection we are going to prove Theorem~\ref{LDST-q} and
as a consequence Theorem~\ref{LDST}. Now fix $n\in\N$,
$k\in\{1,\dots,n\}$, \ $p\in[\p,n]$  \ and \ $q\in[0,p]$. Denote
in this subsection
\begin{equation}\label{muu1}
\mu=p-q.
\end{equation}
Fix also a mapping $v\in\WW^k_{\p,1}(\R^n,\R^d)$. For a set
$E\subset\R^n$ define the set function
\begin{equation}\label{dd5}
\Phi(E)=\inf\limits_{E\subset\bigcup_\alpha
D_\alpha}\sum\limits_\al\bigl(\diam D_\alpha\bigr)^\mu\bigl[\diam
v(D_\alpha)\bigr]^q,
\end{equation}
where the infimum is taken over all countable families of compact
sets $\{D_\alpha\}_{\alpha\in \N}$ such that
$E\subset\bigcup_\alpha D_\alpha$. By Theorem~\ref{alaFalc} (see
Appendix), \ $\Phi(\cdot)$ is a countably subadditive set-function
with the property
\begin{equation}\label{dd6} \Phi(E)=0\ \boldsymbol{\Rightarrow}\
\biggl[\H^\mu\bigl(E\cap v^{-1}(y)\bigr)=0\quad\mbox{for
$\H^q$-almost all }y\in\R^d\biggr].
\end{equation}
Thus the assertion of Theorem~\ref{LDST-q} amounts to
\begin{equation}\label{dd7}
\Phi(E)=0\qquad\mbox{ whenever }\ \H^p(E)=0.
\end{equation}
The proof of this follows the ideas of~\cite{KK3}.

\noindent By a \textbf{dyadic interval} we understand  a cubic
interval of the form
$[\frac{k_1}{2^l},\frac{k_1+1}{2^l}]\times\dots\times[\frac{k_n}{2^l},\frac{k_n+1}{2^l}]$,
where $k_i,l$ are integers. The following assertion is
straightforward, and hence we omit its proof here.

\begin{lem}\label{lemD}{\sl
For any $n$--dimensional cubic interval $J\subset\R^n$ there exist
dyadic intervals $Q_1,\dots,Q_{2^n}$ such that $J\subset
Q_1\cup\dots\cup Q_{2^n}$ and
$\ell(Q_1)=\dots=\ell(Q_{2^n})\le2\ell(J)$. }
\end{lem}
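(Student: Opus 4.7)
The plan is to pick a dyadic sidelength $r = 2^{-l}$ satisfying $\ell(J) \le r \le 2\ell(J)$; at such a scale, the projection of $J$ onto any coordinate axis has length $\ell(J) \le r$, hence meets at most two adjacent dyadic intervals of length $r$, and taking Cartesian products of these one-dimensional coverings will produce a covering of $J$ by at most $2^n$ dyadic $n$-cubes of common sidelength $r$. Such an integer $l$ always exists, because the real interval $[-\log_2(2\ell(J)),\, -\log_2 \ell(J)]$ has length exactly one and therefore contains an integer.

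More concretely, I would write the projection of $J$ onto the $i$-th coordinate axis as $[a_i, a_i + \ell(J)]$, set $k_i = \lfloor a_i / r \rfloor$, and observe that since $\ell(J) \le r$ and $a_i \in [k_i r,(k_i+1)r)$ we have
$$[a_i,\, a_i + \ell(J)] \subset [k_i r,\, (k_i + 2) r] = [k_i r,\, (k_i + 1) r] \,\cup\, [(k_i + 1) r,\, (k_i + 2) r],$$
so this projection lies in the union of two consecutive one-dimensional dyadic intervals of length $r$. Taking the $n$-fold Cartesian product of these axis coverings exhibits $J$ inside the union of (at most) $2^n$ dyadic $n$-cubes of common sidelength $r \in [\ell(J), 2\ell(J)]$. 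Listing them as $Q_1, \dots, Q_{2^n}$, repeating entries if fewer than $2^n$ distinct ones appear (for example if some $a_i$ happens to be an integer multiple of $r$), completes the argument.

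There is essentially no obstacle here; the only mildly delicate point is the choice of the dyadic scale $l$, which must simultaneously satisfy the sidelength bound $r \le 2\ell(J)$ (needed in the conclusion) and the covering-multiplicity bound $r \ge \ell(J)$ (needed so that each axis is covered by just two dyadic intervals). Both are ensured by the length-one interval observation made above, and that is why the explicit factor $2$ appears in the conclusion $\ell(Q_j) \le 2\ell(J)$.
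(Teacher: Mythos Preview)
Your argument is correct and is precisely the natural elementary proof the paper has in mind; indeed, the paper simply states that the assertion is straightforward and omits the proof entirely. Your careful handling of the choice of dyadic scale (the length-one interval containing an integer) and the floor-based covering of each coordinate projection is exactly what is needed, so there is nothing to add or compare.
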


Let  $\{ Q_\alpha \}_{\alpha \in A}$ be a family of
$n$-dimensional dyadic intervals. We say that the family $\{
Q_\alpha \}$ is {\bf regular}, if for any $n$-dimensional dyadic
interval $Q$ the estimate
\begin{equation}\label{q8}
\ell(Q)^{{p}}\ge\sum\limits_{\alpha : Q_\alpha\subset
Q}\ell(Q_\alpha)^{{{p}}}
\end{equation}
holds. Since dyadic intervals are either nonoverlapping or
contained in one another, (\ref{q8}) implies that any regular
family $\{ Q_\alpha \}$ must in particular consist of
nonoverlapping intervals.
%\footnote{ By {\it
%disjoint dyadic intervals} we mean intervals with disjoint interior.}

\begin{lem}[see Lemma~2.3 in \cite{BKK2}]\label{lb5.1}{\sl
Let $\{ Q_\alpha \}$ be a family of $n$--dimensional dyadic
intervals. Then there exists
 a regular family $\{ J_\beta \}$ of $n$--dimensional
dyadic intervals such that $\bigcup_\alpha Q_\alpha\subset
\bigcup_\beta J_\beta$ and
$$
\sum\limits_{\beta}\ell(J_\beta)^{{{p}}}\le\sum\limits_{\alpha}\ell(
Q_\alpha)^{{{p}}}.
$$
}\end{lem}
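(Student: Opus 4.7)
The plan is to construct $\{J_\beta\}$ by replacing each $Q_\alpha$ with a maximal dyadic ``ancestor'' satisfying a saturation inequality, and then verify both the regularity condition and the $p$-sum estimate directly. If $S := \sum_\alpha \ell(Q_\alpha)^p = +\infty$ there is nothing to prove (any dyadic cover of $\bigcup_\alpha Q_\alpha$ works), so suppose $S<\infty$. For each index $\alpha$, introduce the family
\[
\mathcal{A}_\alpha \;=\; \Bigl\{ J \text{ dyadic} : J\supseteq Q_\alpha,\ \ \ell(J)^{p}\le \sum_{\beta:\,Q_\beta\subseteq J}\ell(Q_\beta)^{p}\Bigr\}.
\]
Note that $Q_\alpha\in \mathcal{A}_\alpha$, and every $J\in\mathcal{A}_\alpha$ satisfies $\ell(J)^p\le S$; since the dyadic cubes containing $Q_\alpha$ form a linearly ordered chain, there is a unique largest cube in $\mathcal{A}_\alpha$, which I denote $J_\alpha$. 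Let $\{J_\gamma\}$ be the list of distinct cubes so obtained.

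Next I would verify that distinct $J_\gamma$'s are non-overlapping. Two overlapping dyadic cubes are comparable by inclusion, so it suffices to rule out a strict inclusion $J_{\alpha_1}\subsetneq J_{\alpha_2}$. But membership in $\mathcal{A}_\alpha$ depends only on the cube itself, and $J_{\alpha_2}\supseteq Q_{\alpha_1}$, so $J_{\alpha_2}\in\mathcal{A}_{\alpha_1}$, contradicting the maximality of $J_{\alpha_1}$. A consequence is that each $Q_\beta$ lies inside exactly one $J_\gamma$, namely $J_\beta$. The required $p$-sum bound follows at once:
\[
\sum_\gamma \ell(J_\gamma)^{p}\;\le\; \sum_\gamma \sum_{\beta:\,Q_\beta\subseteq J_\gamma}\ell(Q_\beta)^{p} \;=\; \sum_\beta \ell(Q_\beta)^{p},
\]
and the inclusion $\bigcup_\alpha Q_\alpha\subseteq \bigcup_\gamma J_\gamma$ is built into the construction.

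The main remaining task, and the step that requires some care, is verifying the regularity inequality~(\ref{q8}) for an arbitrary dyadic cube $Q$. I would split into three cases. If $Q$ contains no $J_\gamma$ the inequality is vacuous. If $Q=J_{\gamma_0}$ for some $\gamma_0$, then non-overlap forces $J_{\gamma_0}$ to be the only cube of the family contained in $Q$, and the inequality reads $\ell(Q)^p\ge \ell(J_{\gamma_0})^p$. Otherwise $Q$ strictly contains every $J_\gamma\subseteq Q$, and in particular $Q$ is a strict dyadic ancestor of some $Q_\alpha$; by maximality of $J_\alpha$ the cube $Q$ lies outside $\mathcal{A}_\alpha$, so $\sum_{\beta:\,Q_\beta\subseteq Q}\ell(Q_\beta)^{p}<\ell(Q)^{p}$. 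Combining with the defining inequality for each $J_\gamma\subseteq Q$ and their non-overlap yields
\[
\sum_{J_\gamma\subseteq Q}\ell(J_\gamma)^{p}\;\le\; \sum_{J_\gamma\subseteq Q}\ \sum_{\beta:\,Q_\beta\subseteq J_\gamma}\ell(Q_\beta)^{p}\;\le\; \sum_{\beta:\,Q_\beta\subseteq Q}\ell(Q_\beta)^{p}\;<\;\ell(Q)^{p},
\]
which closes the argument. The subtle point is precisely this trichotomy: the borderline case $Q=J_{\gamma_0}$ has to be handled on its own, because the defining inequality for $J_{\gamma_0}$ may be strict and so by itself would not yield $\sum \ell(J_\gamma)^p\le \ell(Q)^p$ without invoking the non-overlap of the $J_\gamma$'s.
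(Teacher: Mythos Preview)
The paper does not prove this lemma itself but cites \cite[Lemma~2.3]{BKK2}; your argument via maximal ``saturated'' dyadic ancestors is correct and is essentially the standard proof one would expect there. One small imprecision: in the case $S=\infty$ the claim that ``any dyadic cover of $\bigcup_\alpha Q_\alpha$ works'' is not right, since an arbitrary dyadic cover need not be \emph{regular}; but this is harmless, as the lemma is only ever invoked in the paper (see Lemma~\ref{Thh3.3} and the proof of Theorem~\ref{LDST-q}) with $\sum_\alpha \ell(Q_\alpha)^p<\delta<\infty$.
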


\begin{lem}[see {\color{blue} Lemma~2.11} in \cite{KK15} and Lemma~2.4 in \cite{KK3}]\label{Thh3.3}{\sl
Let $v \in \WW^{k}_{\p ,1}(\R^n , \R^d )$. For each
$\varepsilon>0$ there exists $\delta=\delta(\varepsilon,v)>0$ such
that for any regular family $\{ Q_\alpha \}$ of $n$--dimensional
dyadic intervals we have if
\begin{equation}
\label{oxe0} \sum_\alpha\ell(Q_\alpha)^{{p}}<\delta,
\end{equation} then
\begin{equation}
\label{oxe1} \sum_\alpha\biggl[\bigl\|1_{Q_\alpha}\cdot\nabla^{k}
v\bigr\|^{{{p}}}_{\LL_{\p,1}}+\frac1{\ell(Q_\alpha)^{n-{p}}}\int_{Q_\alpha}|\nabla
v|^{{{p}}}\biggr]<\varepsilon.
\end{equation}
}\end{lem}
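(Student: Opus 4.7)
The plan is to bound each of the two sums on the left-hand side of (\ref{oxe1}) separately. Since the family $\{Q_\alpha\}$ is regular, the cubes are pairwise nonoverlapping dyadic intervals, and for $\delta\le 1$ each $\ell(Q_\alpha)\le 1$; hence, using $p\le n$, the union $E := \bigcup_\alpha Q_\alpha$ satisfies $\Le^n(E) = \sum_\alpha \ell(Q_\alpha)^n \le \sum_\alpha \ell(Q_\alpha)^p < \delta$.

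For the first sum $S_1 := \sum_\alpha \|1_{Q_\alpha}\cdot\nabla^k v\|_{\LL_{\p,1}}^p$, Lemma~\ref{asr3} applied at the Lorentz exponent $\p$ gives $\sum_\alpha \|1_{Q_\alpha}\cdot\nabla^k v\|_{\LL_{\p,1}}^{\p} \le \|1_E\cdot\nabla^k v\|_{\LL_{\p,1}}^{\p}$. Since $p\ge\p$ and each individual norm is dominated by $\|\nabla^k v\|_{\LL_{\p,1}}$, raising to the higher exponent introduces only a harmless factor $\|\nabla^k v\|_{\LL_{\p,1}}^{p-\p}$. Smallness of $S_1$ then follows from absolute continuity of the $\LL_{\p,1}$-norm as $\Le^n(E)\searrow 0$.

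For the second sum $S_2 := \sum_\alpha \ell(Q_\alpha)^{-(n-p)}\int_{Q_\alpha}|\nabla v|^p$, I would rewrite it as $S_2 = \int_{\R^n}|\nabla v|^p\,d\mu$, where $\mu := \sum_\alpha \ell(Q_\alpha)^{p-n}\,1_{Q_\alpha}\cdot\Le^n$ has total mass $\mu(\R^n) = \sum_\alpha \ell(Q_\alpha)^p < \delta$. Using regularity (\ref{q8}) combined with Lemma~\ref{lemD} to reduce arbitrary test cubes to dyadic ones, a short case analysis ($Q_\alpha\subset I$ versus $I\subset Q_\alpha$, the latter being unique by the nonoverlapping dyadic structure) yields the Morrey bound $|\!|\!|\mu|\!|\!|_p \le C_n$. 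The layer-cake decomposition
$$
S_2 = \int_0^T \mu(\{|\nabla v|^p > t\})\,dt + \int_T^\infty \mu(\{|\nabla v|^p > t\})\,dt
$$
then bounds the first integral by $T\delta$, and the tail by $C_n\int_T^\infty \H^p_\infty(\{M(|\nabla v|^p) > t\})\,dt$, via the Frostman-type inequality $\mu(A)\le C|\!|\!|\mu|\!|\!|_p\,\H^p_\infty(A)$ and the pointwise bound $f\le Mf$ at Lebesgue points. Once the tail integral is shown to be finite, one first picks $T$ large to make it smaller than $\varepsilon/2$, then $\delta$ small so that $T\delta<\varepsilon/2$.

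The main technical obstacle is to verify finiteness of the tail when $p>\p$, since the Adams--Lorentz estimate (\ref{max1}) of Theorem~\ref{lb7} is calibrated at the Lorentz index $\p$ and yields only $\int_0^\infty \H^{\p}_\infty(\{M(|\nabla v|^{\p})>t\})\,dt<\infty$. I would address this by truncation at height $N$: splitting $|\nabla v|^p = |\nabla v|^p\,1_{\{|\nabla v|\le N\}} + |\nabla v|^p\,1_{\{|\nabla v|>N\}}$. The first piece is dominated by $N^{p-\p}|\nabla v|^{\p}$ and so reduces to the $p=\p$ case, which is a direct application of (\ref{max1}); the second piece is handled using the Sobolev--Lorentz embedding $\WW^{k}_{\p,1}\hookrightarrow \WW^{1}_{n,1}$ (so that $\nabla v\in \LL_{n,1}$), Lorentz--H\"{o}lder inequalities, and the Morrey bound on $\mu$, with smallness of the resulting contribution coming from $\Le^n(\{|\nabla v|>N\})\searrow 0$ as $N\to\infty$.
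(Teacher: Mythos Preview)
The paper does not prove this lemma; it is quoted from \cite{KK15,KK3}. So let me comment only on the internal soundness of your argument.

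Your treatment of $S_1$ is correct: Lemma~\ref{asr3} at exponent $\p$ together with absolute continuity of the Lorentz norm (which follows by dominated convergence in the representation~(\ref{lor1})) gives the smallness. Your treatment of $S_2$ is also correct in the case $p=\p$: the Morrey bound $|\!|\!|\mu|\!|\!|_{\p}\le C_n$, the Frostman inequality $\mu(A)\le C\,\H^{\p}_\infty(A)$, and~(\ref{max1}) with $l=1$, $\beta=n-(k-1)\p=\p$ combine exactly as you say.

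For $p>\p$, however, there is a genuine gap. You bound the tail by $\int_T^\infty \H^{p}_\infty(\{M(|\nabla v|^p)>t\})\,dt$ via the Frostman inequality at exponent~$p$, and then appeal to~(\ref{max1}) for the truncated piece $N^{p-\p}|\nabla v|^{\p}$. But (\ref{max1}) applied to $v\in\WW^{k}_{\p,1}$ with $l=1$ yields only $\beta=\p$, i.e.\ control of the Choquet integral with respect to $\H^{\p}_\infty$, not $\H^{p}_\infty$; and $\H^{p}_\infty$ is \emph{not} dominated by $\H^{\p}_\infty$ on sets of large diameter. One can partly repair this by observing that, since $\delta\le1$ forces $\ell(Q_\alpha)\le1$, the regularity condition actually also gives $|\!|\!|\mu|\!|\!|_{\p}\le C_n$, so one may run Frostman at exponent~$\p$ instead. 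With that fix the first (bounded) truncation piece is indeed handled by~(\ref{max1}). The difficulty is then the second piece $\int |\nabla v|^{p}\,1_{\{|\nabla v|>N\}}\,d\mu$: your sketch (``$\nabla v\in \LL_{n,1}$, Lorentz--H\"older, Morrey bound, $\Le^n(\{|\nabla v|>N\})\to0$'') does not close. Concretely, integrability of $t\mapsto \H^{\p}_\infty(\{M(|\nabla v|^{\p})>t\})$ only gives $t\cdot\H^{\p}_\infty(\{M(|\nabla v|^{\p})>t\})\to0$, which controls $N^{\p}\mu(\{|\nabla v|>N\})$ but not $N^{p}\mu(\{|\nabla v|>N\})$; and the alternative route via $|\nabla v|^p\le N^{p-n}|\nabla v|^n$ on $\{|\nabla v|>N\}$ would require a uniform bound on $\int|\nabla v|^n\,d\mu$, which is the same problem at a higher exponent (the density of $\mu$ is unbounded, so $|\!|\!|\mu|\!|\!|_n$ is not controlled). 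In short, the maximal-function estimate~(\ref{max1}) at the single exponent~$\p$ is not enough by itself to push through the layer-cake argument for $S_2$ when $\p<p<n$; the proof in \cite{KK15,KK3} uses additional input at this point.
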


\begin{proof}[Proof of Theorem~\ref{LDST-q}]
Let $\H^{p}(E)=0$. Take $\varepsilon>0$ and
$\delta=\delta(\varepsilon ,v)<1$ from Lemma~\ref{Thh3.3}. Take
also the regular family $\{ Q_\alpha\}$ of $n$--dimensional dyadic
intervals such that $E\subset\bigcup\limits_\alpha Q_\alpha$ and
\begin{equation}
\label{ass2} \sum_\alpha\ell(Q_\alpha)^{{{p}}}<\delta
\end{equation}
where the existence of such family follows directly from
Lemmas~\ref{lemD}  and \ref{lb5.1}. Then by Lemma~\ref{Thh3.3} the
estimate~(\ref{oxe1}) holds. Denote $r_\alpha=\ell(Q_\al)$. By
estimate~(\ref{fme1}),
\begin{equation} \label{pln1}
\bigl[\diam v(Q_\al)\bigr]^q\le C\biggl(\frac{\|\nabla
v\|^q_{\LL_{p}(Q_\al)}}{r_\al^{(\frac{n}p-1)q}}+
\|1_{Q_\al}\cdot\nabla^{k} v\|^q_{\LL_{\p,1}}\biggr).
\end{equation}
Therefore, by definition of~\,$\Phi(E)$ \ (see (\ref{dd5})\,), we
have
\begin{eqnarray}\label{ass7}
\Phi(E) &\leq& C \sum\limits_\alpha r_\al^\mu\biggl(\frac{\|\nabla
v\|^q_{\LL_{p}(Q_\al)}}{r_\al^{(\frac{n}p-1)q}}+
\|1_{Q_\al}\cdot\nabla^{k} v\|^q_{\LL_{\p,1}}\biggr)\nonumber \\
&\overset{\mbox{\footnotesize\color{red}H\"{o}lder ineq.}}\leq&
c\biggl(\sum\limits_\al
r_\al^{\frac{\mu{p}}{{p}-q}}\biggr)^{\frac{{p}-q}{p}}\cdot\biggl[
\sum\limits_\alpha\biggl(\frac1{\ell(Q_\alpha)^{n-{p}}}\int_{Q_\alpha}|\nabla
v|^{{{p}}}+\|1_{Q_\al}\cdot\nabla^{k}v\|^{p}_{\LL_{\p,1}}\biggr)\biggr]^{\frac{q}{p}}\nonumber\\
&\overset{\mbox{\footnotesize(\ref{muu1}), (\ref{oxe1})}}\leq&
c\biggl(\sum\limits_\al r_\al^{p}\biggr)^{\frac{{p}-q}{p}}\cdot \e^{\frac{q}{p}}\nonumber\\
&\overset{\mbox{\footnotesize(\ref{ass2})}}\leq& c
\delta^{\frac{{p}-q}{p}}\cdot\e^{\frac{q}{p}}.
\end{eqnarray}
Since $\e>0$ and $\delta>0$ are arbitrary small, (\ref{ass7})
\,turns to the equality $\Phi(E)=0$ and by~(\ref{dd6}) the
required assertion is proved.
\end{proof}

\begin{rem}\label{int_opt}
Note that the regularity assumptions in the last theorem are
sharp: for example, the Luzin $N$--property fails in general for
continuous mappings $v\in \WW^1_n(\R^n,\R^n)$ (here $k=1$,
$\p={p}=n=q$, $\mu=0$), see, e.g., \cite{Maly}. The sharpness of
our assumptions for general order Sobolev spaces, though not on
the Sobolev--Lorentz scale, is also a consequence of the recent
and interesting results in \cite{HeHo15}. See also \cite{Kauf} for
earlier results in this direction.
\end{rem}

\subsection{{Dubovitski\u{\i}} theorem for Sobolev mappings}\label{MSP}

Fix integers $k,m\in\{1,\dots,n\}$, $d \geq m$ and a mapping $v\in
\WW^{k}_{\p,1}(\R^n,\R^d)$. Then, by Theorem~\ref{Th_dif}, there
exists a Borel set $A_v$ such that $\H^\p(A_v)=0$ and all points
of the complement $\R^n \setminus A_v$ are {$\LL_\p$-}Lebesgue
points for the weak gradient~$\nabla v$. Moreover, we can arrange
that $v$ is differentiable (in the classical Fr\'{e}chet sense) at
every point~$x \in \R^n \setminus A_v$ with derivative $\nabla
v(x)$ (so the classical derivative coincides with the precise
representative of the weak gradient at $x$).

Denote $Z_{v,m}=\{x\in \Omega \setminus A_v:\rank\nabla v(x)<m\}$.
Fix a~number $$q\in[m-1,\b).$$ Denote in this subsection
\begin{equation}\label{muu3}
\mu=\mu_q=n-m-k+1+(m-q)k.
\end{equation} Since $q<\b=m-1+\frac{n-m+1}k$, we
have $\mu>0$.

The purpose here is to prove the assertion of the \textit{bridge
Dubovitski\u{\i}--Federer Theorem}~\ref{DST-q} which is
equivalent (by virtue of Theorem~\ref{alaFalc}) to
\begin{equation}\label{ev1}
\Phi(Z_{v,m})=0 \qquad\mbox{ if }\ \ q>m-1,
\end{equation}
where for each fixed $q\in[m-1,\b)$ we denoted
\begin{equation}\label{dd5-dd}
\Phi(E)=\inf\limits_{E\subset\bigcup_\alpha
D_\alpha}\sum\limits_\al\bigl(\diam D_\alpha\bigr)^\mu\bigl[\diam
v(D_\alpha)\bigr]^q.
\end{equation}
As indicated the infimum is taken over all countable families of
compact sets $\{D_\alpha\}_{\alpha\in \N}$ such that
$E\subset\bigcup_\alpha D_\alpha$. Note that the case $q=\b$,
$\mu_q=0$ was considered in \cite{KK3,KK15} (see also
Subsection~\ref{MSR}\,), so we shall omit it here.

Before embarking on the detailed proof we make some preliminary
observations that allow us to make a few simplifying assumptions.
In view of our definition of critical set we have that
$$
Z_{v,m} = \bigcup_{j \in \N} \{ x \in Z_{v,m}: \, | \nabla v(x)|
\leq j \} .
$$
Consequently we only need to prove that~$\Phi(Z'_v)=0$ for
$q\in(m-1,\b)$, where
$$
Z'_v=\{x\in Z_{v,m}:|\nabla v(x)|\le1\} .
$$

\noindent For convenience, below we use the notation
$\|f\|_{\LL_{\p,1}(I)}$ instead of $\|1_I\cdot f\|_{\LL_{\p,1}}$.
The following lemma contains the main step in the proof of
Theorems~\ref{DST-q} and \ref{D-Coarea}.

\begin{lem}\label{lb11}{\sl
Let $q\in[m-1,\b)$. Then for any $n$-dimensional dyadic interval
$I\subset\R^n$ the estimate
\begin{equation}
\label{6} \Phi(Z'_v\cap I)\leq C\bigl(\ell(I)^\mu\,\|\nabla^{k}
v\|^{q}_{\LL_{\p,1}(I)}+\ell(I)^{\mu+m-1}\,\|\nabla^{k}
v\|^{q-m+1}_{\LL_{\p,1}(I)}\bigr)
\end{equation}
holds, where the constant $C$ depends on $n,m,k,d$ only.}
\end{lem}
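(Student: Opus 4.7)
The strategy follows the template of \cite{BKK2,KK3}: partition $I$ into small cubes, approximate $v$ by a polynomial on each, apply Yomdin's entropy bound (Theorem~\ref{lb8}) combined with the Lorentz--maximal function estimate (Theorem~\ref{lb7}) and Lemma~\ref{lb10} to handle the set where this approximation fails. I will fix one parameter $\varepsilon\in(0,1]$ that gets optimized at the end, and the two terms on the right of~(\ref{6}) will emerge from the two natural scales in Yomdin's covering.

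Fix $\varepsilon\in(0,1]$ and subdivide $I$ into $N_0^n$ equal sub-cubes $\{Q_j\}$ of side $r=\ell(I)/N_0$, with $N_0$ to be chosen in terms of $\varepsilon$ and $\eta:=\|\nabla^{k}v\|_{\LL_{\p,1}(I)}$. On each $Q_j$ split $v=P_j+v_{Q_j}$, where $P_j:=P_{Q_j}[v]$. By Lemma~\ref{lb3} the remainder extends to $\R^n$ with $\|\nabla^{k}v_{Q_j}\|_{\LL_{\p,1}}\le C\eta_j$, $\eta_j:=\|\nabla^{k}v\|_{\LL_{\p,1}(Q_j)}$, and Lemma~\ref{asr3} gives $\sum_j\eta_j^{\p}\le C\eta^{\p}$. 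Define the good part $G_j:=\{x\in Q_j\setminus A_v:\M(\nabla v_{Q_j})(x)\le\varepsilon\}$. Lemma~\ref{lb10} applied to the extension of $v_{Q_j}$ gives $\diam v_{Q_j}(G_j)\le C\varepsilon r$, and the $\LL_{\p}$-Lebesgue point property on $\R^n\setminus A_v$ (Theorem~\ref{Th_dif}) yields the crucial pointwise bound $|\nabla v(x)-\nabla P_j(x)|\le\varepsilon$ for every $x\in G_j$.

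For $x\in G_j\cap Z'_v$ the conditions $|\nabla v(x)|\le 1$ and $\rank\nabla v(x)<m$, together with Weyl's $1$-Lipschitz property of singular values, produce $\lambda_i(P_j,x)\le 1+\varepsilon$ for $i<m$ and $\lambda_i(P_j,x)\le\varepsilon$ for $i\ge m$. Theorem~\ref{lb8} then covers $P_j(G_j\cap Z'_v)$ by $C_Y(1+\varepsilon^{1-m})$ balls of radius $\varepsilon r$, and $\sup_{Q_j}|v-P_j|\le C\eta_j$ (Lemma~\ref{lb3}) upgrades this to a covering of $G_j\cap Z'_v$ by sets $D_{j,i}$ with $\diam D_{j,i}\le r\sqrt{n}$ and $\diam v(D_{j,i})\le C(\varepsilon r+\eta_j)$. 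The bad set $Q_j\setminus G_j$ (together with the $\H^{\p}$-null set $A_v\cap Q_j$) is covered, via Chebyshev and Theorem~\ref{lb7} with $l=1$, $\beta=\p$, by balls of total $\H^{\p}_\infty$-content at most $C\varepsilon^{-\p}\eta_j^{\p}$; the $v$-diameter on each such ball is controlled by Corollary~\ref{me1}, while the $|\nabla v|\le 1$ bound holding throughout $Z'_v$ keeps the contribution tame.

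Summing the diameters and expanding $(\varepsilon r+\eta_j)^q\le C((\varepsilon r)^q+\eta_j^q)$ gives
$$\Phi(Z'_v\cap I)\le C\sum_j(1+\varepsilon^{1-m})\,r^\mu\bigl((\varepsilon r)^q+\eta_j^q\bigr)+(\text{bad-set contribution}),$$
and H\"older's inequality applied through $\sum_j\eta_j^{\p}\le C\eta^{\p}$ converts this into a sum of multiples of $\ell(I)^\mu\eta^q$ and $\ell(I)^{\mu+m-1}\eta^{q-m+1}$ after the choice $N_0\sim\lceil\varepsilon^{-1}\rceil$ and a final optimization in $\varepsilon$. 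The main obstacle is precisely this calibration of scales: Yomdin's $\varepsilon^{1-m}$ factor is what naturally drives the $\ell(I)^{\mu+m-1}\eta^{q-m+1}$ branch, while the remainder pieces $\eta_j$ inside the ball radii and the bad-set covering generate the $\ell(I)^\mu\eta^q$ branch, and one must arrange $r$ and $\varepsilon$ so that neither surplus powers of $\varepsilon$ nor of $r$ survive in the final bound. This coordination, together with an honest bookkeeping of the Lorentz-subadditivity~(Lemma~\ref{asr3}) across all sub-cubes, is what turns the standard ingredients into the sharp inequality~(\ref{6}).
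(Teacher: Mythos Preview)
Your strategy departs from the paper's in an essential way, and the departure leaves a genuine gap in the treatment of the bad set.

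The paper does \emph{not} subdivide $I$. It works with the single polynomial $P_I$ on all of $I$, sets $\sigma=\|\nabla^k v_I\|_{\LL_{\p,1}}$, $r=\ell(I)$, and decomposes $I$ into the dyadic level sets $E_j=\{x\in I:(\M|\nabla v_I|^{\p})(x)\in(2^{j-1},2^j]\}$. A threshold $\e_*=\sigma/r$ splits these into a low part $Z_*$ and a high part $Z_{**}$. On $Z_*$ one application of Theorem~\ref{lb8} with $\e=\e_*$ on a single ball $B\supset I$ already yields $\Phi(Z_*)\le C(r^\mu\sigma^q+r^{\mu+m-1}\sigma^{q-m+1})$. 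For $Z_{**}$ each $E_j$ is covered by balls $B_{ij}$ with $\sum_i r_{ij}^{\p}\lesssim\H^{\p}_\infty(E_j)=:\delta_j$, and on \emph{each} such ball Yomdin is applied again, now with the level-adapted parameter $\e_j=2^{j/\p}$: on $Z'_v\cap E_j\cap B_{ij}$ one has simultaneously the singular-value bounds for $\nabla P_I$ and the maximal bound $\M|\nabla v_I|^{\p}\le 2^j$, so Lemma~\ref{lb10} controls $\diam v_I$ at scale $\e_j r_{ij}$. The sum over $j\ge j_*$ is closed via $\sum_j\delta_j 2^j\le C\sigma^{\p}$ (Theorem~\ref{lb7}) and a case split $q\gtrless\p$.

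Your good-set step is essentially the paper's $Z_*$ and is fine. The gap is the bad set. You cover $Q_j\setminus G_j$ by balls of total $\H^{\p}_\infty$-content $\lesssim\varepsilon^{-\p}\eta_j^{\p}$ and invoke Corollary~\ref{me1} for the $v$-diameter, appealing to ``$|\nabla v|\le 1$ on $Z'_v$'' to keep the contribution tame. But Corollary~\ref{me1} bounds $\diam v(B_\alpha)$ through $\|\nabla v\|_{\LL_p(B_\alpha)}$ on the \emph{whole} ball, where $|\nabla v|$ is uncontrolled; and the bound $|\nabla v|\le 1$, holding only on $Z'_v$, gives no Lipschitz control of $v$ on the possibly disconnected set $B_\alpha\cap Z'_v$. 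Without re-applying Yomdin on each bad ball with a parameter tied to the local maximal-function level---so that Lemma~\ref{lb10} and Theorem~\ref{lb8} again cooperate---there is no mechanism to bound $\sum_\alpha\rho_\alpha^\mu[\diam v(B_\alpha\cap Z'_v)]^q$ by the right-hand side of~(\ref{6}). A single threshold $\varepsilon$ plus a final optimisation cannot replace the dyadic level decomposition; the ``bad-set contribution'' term in your display cannot be closed as written.
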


\begin{proof}
By virtue of (\ref{1'}) it suffices to prove that
\begin{equation}
\label{6corr} \Phi(Z'_v\cap I)\leq
C\bigl(\ell(I)^\mu\,\|\nabla^{k}
v_I\|^{q}_{\LL_{\p,1}(\R^n)}+\ell(I)^{\mu+m-1}\,\|\nabla^{k}
v_I\|^{q-m+1}_{\LL_{\p,1}(\R^n)}\bigr)
\end{equation}
for the mapping $v_{I}$ defined in Lemma~\ref{lb3}, where
$C=C(n,m,k,d)$ is a constant.

Fix an $n$-dimensional dyadic interval $I\subset\R^n$ and recall
that $v_{I}(x)=v(x)-P_{I}(x)$ for all $x\in I$. Denote
$$
\sigma=\|\nabla^{k} v_{I}\|_{\LL_{\p,1}},\qquad r=\ell(I),
$$
and for each $j\in \mathbb Z$
$$
E_j=\bigl\{x\in I: (\M |\nabla v_{I}|^{\p})(x)\in(2^{j-
1},2^{j}]\bigr\} \quad \mbox{ and } \quad
\delta_j=\H^{\p}_\infty(E_j).
$$
Then by Theorem~\ref{lb7} (applied for the case $p=\p=\frac{n}k$,
$l=1$, $\beta=\p$\,),
\begin{equation}
\label{tr-d1} \sum\limits_{j=-\infty}^\infty \delta_j2^j\leq
C\sigma^\p
\end{equation}
for a constant $C$ depending on $n,m,k,d$ only. By the definition
of the~Hausdorff measure, for each $j\in\mathbb Z$ there exists a
family of balls $B_{ij}\subset\R^n$ of radii $r_{ij}$ such that
\begin{equation}
\label{res1} E_j\subset \bigcup\limits_{i=1}^\infty B_{ij} \mbox{\
\ \ and\ \ \ }\sum\limits_{i=1}^\infty r^{\p}_{ij}\le c\delta_j.
\end{equation}

Of course, using standard covering lemmas we can assume without
loss of generality that the concentric balls $\tilde B_{ij}$ with
radii $\frac15r_{ij}$ are disjoint, hereby follows in particular
that
\begin{equation}
\label{ev2} \sum\limits_{i\in\N,\ j\in\Z} r^{n}_{ij}\le C
r^n\qquad\mbox{ and }\qquad\sum\limits_{i\in\N,\ j\in\Z}
r^{\lambda n}_{ij}\leq Cr^{\lambda n} \qquad\forall\lambda\geq 1.
\end{equation}

Denote
$$
Z_j=Z'_v\cap E_j \quad \mbox{ and } \quad Z_{ij}=Z_j\cap B_{ij}.
$$
By construction $Z'_v\cap I=\bigcup_{j}Z_{j}$ \ and \
$Z_j=\bigcup_{i}Z_{ij}$. Put
$$
\e_*=\frac1{r}\|\nabla^kv_I\|_{\LL_{\p,1}}=\frac{\sigma}{r},
$$
and let $j_*$ be the integer satisfying
$\e_*^{\p}\in(2^{j_*-1},2^{j_*}]$. Denote
$Z_*=\bigcup_{j<j_*}Z_{j}$, \ $Z_{**}=\bigcup_{j\ge j_*}Z_{j}$.
Then by construction
$$
Z'_v\cap I=Z_*\cup Z_{**},\quad Z_*\subset \{x\in Z'_v\cap I:(\M
|\nabla v_{I}|^{\p})(x)<\e_*^{\p}\}.
$$

Since $\nabla P_{I}(x)=\nabla v(x)-\nabla v_{I}(x)$, $|\nabla
v_I(x)|\le 2^{j/\p}$, $|\nabla v(x)|\leq 1$, and
$\lambda_m(v,x)=0$ for $x\in Z_{ij}$ , we have\footnote{Here we
use the following elementary fact: for any linear maps $L_1 \colon
\R^n\to\R^d$ and $L_2 \colon \R^n\to\R^d$ the estimates
$\lambda_l(L_2+L_2)\le \lambda_l(L_1)+\|L_2\|$ hold for all
$l=1,\dots,m$, see, e.g., \cite[Proposition 2.5 (ii)]{Yom}.}
$$
Z_{ij}\subset \bigl\{x\in B_{ij}: \lambda_1(P_{I},x)\le
1+2^{j/\p},\dots,\lambda_{m-1}(P_{I},x)\le 1+2^{j/\p},\
\lambda_m(P_{I},x)\le 2^{j/\p}\bigr\}.
$$
Applying Theorem~\ref{lb8} and Lemma~\ref{lb10} to mappings
$P_{I}$, \ $v_{I}$, respectively, with $B=B_{ij}$ and
$\e=\varepsilon_j =2^{j/\p}$, we find a finite family of balls
$T_s\subset\R^d$, $s=1,\dots,s_j$ with $s_j \leq
C_{Y}(1+\e_j^{1-m})$, each of radius $(1+C_{M})\e_jr_{ij}$,  such
that
$$
\bigcup\limits_{s=1}^{s_j}T_s \supset v(Z_{ij}).
$$
Therefore, for every $j\ge j_*$ we have
\begin{equation}\label{Iom1}
\Phi(Z_{ij})\overset{\footnotesize(\ref{dd5-dd})}\le C_1
s_j\e_j^{q}r^{q+\mu}_{ij}=C_2(1+\e_j^{1-m})2^{\frac{jq}{\p}}r_{ij}^{q+\mu}\le
C_2(1+\e_*^{1-m})2^{\frac{jq}{\p}}r_{ij}^{q+\mu},
\end{equation}
where all the constants $C_\alpha$ above depend on $n,m,k,d$ only.
By the same reasons, but this time applying Theorem~\ref{lb8} and
Lemma~\ref{lb10} with $\e=\e_*$ and instead of the balls $B_{ij}$
we take a ball $B\supset I$ with radius $\sqrt{n}r$, we have
\begin{eqnarray}
\Phi(Z_{*})\le C_3(1+\e_*^{1-m})\e_*^q r^{q+\mu}
&\overset{\mbox{\footnotesize\color{red}definitions}}=&
C_3(1+\sigma^{1-m}r^{m-1})\sigma^q r^{\mu}\nonumber\\
&=& C_3\bigl(r^{\mu} \sigma^q
+r^{\mu+m-1}\sigma^{q-m+1}\bigr).\label{Iom2}
\end{eqnarray}
From (\ref{Iom1}) we get immediately
\begin{equation}\label{Iom2'}
\Phi(Z_{**})\le C_2(1+\e_*^{1-m})\sum\limits_{j\ge j_*}
\sum\limits_i2^{\frac{jq}{\p}}r_{ij}^{q+\mu}.
\end{equation}
Further estimates splits into the two possibilities.

{\bf Case I.} \ $q\ge \p$. Then
\begin{eqnarray}
\Phi(Z_{**}) &\overset{(\ref{Iom2'})}\leq &
C_2(1+\e_*^{1-m})\biggl(\sum\limits_{j\ge j_*}\sum\limits_i2^{j}r_{ij}^{(q+\mu)\frac{\p}{q}}\biggr)^{\frac{q}\p}\nonumber\\
&\leq& C_2(1+\e_*^{1-m})r^\mu\biggl(\sum\limits_{j\geq j_*}\sum\limits_i2^{j}r_{ij}^{\p}\biggr)^{\frac{q}\p}\nonumber\\
&\overset{(\ref{res1})}\leq& C_4(1+\e_*^{1-m})r^\mu\biggl(\sum\limits_{j\geq j_*}2^{j}\delta_j\biggr)^{\frac{q}\p}\nonumber\\
&\overset{(\ref{tr-d1})}\leq& C_5(1+\e_*^{1-m})r^\mu\sigma^q=
C_5\bigl(r^{\mu} \sigma^q +r^{\mu+m-1}\sigma^{q-m+1}\bigr)
\label{Iomm2}
\end{eqnarray}

{\bf Case II.}  \ $q< \p$. Recalling (\ref{muu3}) we get by an
elementary calculation
\begin{eqnarray}\label{fimm1}
\frac{\mu\p}{\p-q} &=& n\cdot\frac{n-qk+[mk-m-k+1]}{n-qk}\nonumber\\
&=& n\cdot\frac{n-qk+(m-1)(k-1)}{n-qk} \geq n,
\end{eqnarray}
therefore,
\begin{eqnarray}
\Phi(Z_{**}) &\overset{(\ref{Iom2'}),\
\mbox{\footnotesize\color{red}H\"{o}lder ineq.}}\leq&
C_2(1+\e_*^{1-m})\biggl(\sum\limits_{j\geq
j_*}\sum\limits_i2^{j}r_{ij}^{\p}\biggr)^{\frac{q}\p}\cdot
\biggl(\sum\limits_{j\geq j_*}\sum\limits_ir_{ij}^{\frac{\mu\p}{\p-q}}\biggr)^{\frac{\p-q}{\p}}\nonumber\\
&\overset{(\ref{res1}),\ (\ref{tr-d1})}\leq&
C_6(1+\e_*^{1-m})\sigma^q \biggl(\sum\limits_{j\geq j_*}
\sum\limits_ir_{ij}^{\frac{\mu\p}{\p-q}}\biggr)^{\frac{\p-q}{\p}}\nonumber\\
&\overset{(\ref{ev2}),\ (\ref{fimm1})}\leq& C_6(1+\e_*^{1-m})\sigma^q r^\mu\nonumber\\
&=& C_6\bigl(r^{\mu} \sigma^q
+r^{\mu+m-1}\sigma^{q-m+1}\bigr).\label{Iomm3}
\end{eqnarray}
Now for both cases (I) and (II) we have by (\ref{Iomm2}),
(\ref{Iomm3}) that $\Phi(Z_{**})\le C\bigl(r^{\mu} \sigma^q
+r^{\mu+m-1}\sigma^{q-m+1}\bigr)$, and, by virtue of the earlier
estimate~(\ref{Iom2}),  we conclude that
$$
\Phi(Z'_v\cap I)=\Phi(Z_*\cup Z_{**})\leq
\Phi(Z_*)+\Phi(Z_{**})\le C\bigl(r^{\mu} \sigma^q
+r^{\mu+m-1}\sigma^{q-m+1}\bigr).
$$
The lemma is proved.
\end{proof}

\begin{cor}
\label{cor00} {\sl  Let $q\in[m-1,\b)$. Then for any
$\varepsilon>0$ there exists $\delta>0$ such that for any subset
$E$ of $\R^n$ we have $\Phi(Z'_v\cap E)\le\varepsilon$ provided
$\Le^n(E)\le\delta$. In particular, $\Phi(Z_{v,m}\cap E)=0$
whenever $\Le^n(E)=0$.}
\end{cor}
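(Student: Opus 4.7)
\medskip

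\emph{Plan.} The aim is to reduce the corollary to Lemma~\ref{lb11} via a covering argument and H\"{o}lder's inequality. First, given $\Le^n(E)\le\delta<1$, I will enclose $E$ in an open set $U$ with $\Le^n(U)\le 2\delta$ and write $U$ as a disjoint union of dyadic cubes $\{Q_\alpha\}$ with $\sum_\alpha\ell(Q_\alpha)^n=\Le^n(U)$. Invoking Lemma~\ref{lb5.1} with parameter $n$ then supplies a regular family $\{J_\beta\}$ of dyadic intervals covering $E$ with $\sum_\beta\ell(J_\beta)^n\le 2\delta$; the smallness of~$\delta$ automatically forces $\ell(J_\beta)<1$ for every~$\beta$.

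By countable subadditivity of~$\Phi$ together with Lemma~\ref{lb11} applied on each $J_\beta$,
$$
\Phi(Z'_v\cap E)\le C\sum_\beta\bigl(\ell(J_\beta)^\mu\sigma_\beta^q+\ell(J_\beta)^{\mu+m-1}\sigma_\beta^{q-m+1}\bigr),
$$
where $\sigma_\beta:=\|\nabla^kv\|_{\LL_{\p,1}(J_\beta)}$. Lemma~\ref{asr3} controls the global $\LL_{\p,1}$-summability: $\sum_\beta\sigma_\beta^\p\le\sigma^\p$ with $\sigma:=\|\nabla^kv\|_{\LL_{\p,1}}$, and trivially $\sigma_\beta\le\sigma$.

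Next I will bound the two sums by H\"{o}lder. For the second sum, setting $q':=q-m+1$ and $\mu':=\mu+m-1$, an arithmetic check shows $\mu'=n-kq'$ and $q'<\p$ strictly for every $q\in[m-1,\b)$; hence H\"{o}lder with exponents $\p/q'$ and $\p/(\p-q')$, together with the identity $\mu'\p/(\p-q')=n$, yields a bound of the form $\sigma^{q'}(2\delta)^{(\p-q')/\p}$. For the first sum I split cases: if $q<\p$, H\"{o}lder with exponents $\p/q,\p/(\p-q)$ combined with the inequality $\mu\p/(\p-q)\ge n$ (which reduces to $(m-1)(k-1)\ge 0$) and $\ell(J_\beta)<1$ gives a bound $C\sigma^q\delta^{(\p-q)/\p}$; if $q\ge\p$, I will instead write $\sigma_\beta^q\le\sigma^{q-\p}\sigma_\beta^\p$ and absorb the geometric factor into $\sup_\beta\ell(J_\beta)^\mu\le(2\delta)^{\mu/n}$, using $\mu>0$.

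Combining these estimates yields $\Phi(Z'_v\cap E)\le C(\delta^a+\delta^b)$ for strictly positive exponents $a,b$, proving the first assertion. The ``in particular'' clause follows by letting $\delta\to0$, then exhausting $Z_{v,m}=\bigcup_{j\ge 1}\{x\in Z_{v,m}:|\nabla v(x)|\le j\}$ and applying the first assertion to the rescaled mapping $v/j$ (which has the same critical set but satisfies $|\nabla(v/j)|\le 1$ on the $j$-th piece), noting that $\Phi$ transforms as a factor of $j^q$ under $v\mapsto v/j$. The main delicate point I anticipate is the verification of the two arithmetic identities $\mu'\p/(\p-q')=n$ and $\mu\p/(\p-q)\ge n$; these are exactly what make the H\"{o}lder exponents line up with the global summability provided by Lemmas~\ref{asr3} and~\ref{lb5.1}, and they encode the precise reason why $\mu_q$ is the sharp bridge exponent.
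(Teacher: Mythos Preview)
Your proposal is correct and follows essentially the same route as the paper: cover $E$ by nonoverlapping dyadic intervals of total volume $\lesssim\delta$, apply Lemma~\ref{lb11} on each, and estimate the two resulting sums via H\"older together with Lemma~\ref{asr3} and the arithmetic identities $(\mu+m-1)\p/(\p-q+m-1)=n$ and $\mu\p/(\p-q)\ge n$, splitting the first sum according to $q\gtrless\p$. Your handling of the case $q\ge\p$ via $\sigma_\beta^q\le\sigma^{q-\p}\sigma_\beta^{\p}$ is in fact slightly cleaner than the paper's, which instead arranges $\sigma_\alpha<1$ by choosing the intervals small enough; the detour through Lemma~\ref{lb5.1} is harmless but unnecessary, since any Whitney decomposition of an open neighbourhood of $E$ already gives disjoint dyadic cubes with the required volume bound.
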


\begin{proof}
We start by recording the following elementary identity
(see~(\ref{muu3})\,):
\begin{equation}\label{fimm3}
\frac{(\mu+m-1)\p}{\p-q+m-1}=n.
\end{equation} Let $\Le^n(E)\le\delta$, then we can find a family of
nonoverlapping $n$-dimensional dyadic intervals~$I_\alpha$ such
that $E\subset\bigcup_\alpha I_\alpha$ and
$\sum\limits_\alpha\ell^n(I_\alpha)<C\delta$. Of course, for
sufficiently small $\delta$ the estimates
\begin{equation}\label{eles1}\|\nabla^{k}
v\|_{\LL_{\p,1}(I_\alpha)}<1,\qquad \ell(I_\alpha)\le
\delta^\frac1n
\end{equation} are fulfilled for every~$\alpha$. Denote
\begin{equation}\label{ds5}r_\alpha=\ell(I_\alpha),\qquad\sigma_\alpha=\|\nabla^{k}
v\|_{\LL_{\p,1}(I_\alpha)},\qquad\sigma=\|\nabla^{k}
v\|_{\LL_{\p,1}}.\end{equation} In view of Lemma~\ref{lb11} we
have
\begin{equation}\label{oxMS1}\Phi(E)\le C\sum_\alpha
r_\alpha^{\mu+m-1}\sigma_\alpha^{q-m+1}+C\sum_\alpha
r_\alpha^{\mu} \sigma_\alpha^q.
\end{equation}
Now let us estimate the first sum. Since by our assumptions
$$
q<\b=m-1+\frac{n-m+1}k\le m-1+\p\ \mbox{ hence }\ \p> q-m+1
$$
we have
\begin{eqnarray}\label{oxMS3}
\sum_\alpha r_\alpha^{\mu+m-1}\sigma^{q-m+1}_\alpha
&\overset{\mbox{\footnotesize\color{red}H\"{o}lder ineq.}}\leq &
C\biggl(\sum\limits_\alpha\sigma_\alpha^\p\biggr)^{\frac{q-m+1}\p}\cdot \biggl(\sum\limits_\alpha r_{\alpha}^{\frac{(\mu+m-1)\p}{\p-q+m-1}}\biggr)^{\frac{\p-q+m-1}{\p}}\nonumber\\
&\overset{(\ref{fimm3}),\
\mbox{\footnotesize\color{red}Lemma~\ref{asr3}}}\leq& C'
\sigma^{q-m+1}\cdot \biggl(\Le^n(E)\biggr)^{\frac{\p-q+m-1}{\p}}.
\end{eqnarray}
The estimates of the second sum are again handled by consideration
of two separate cases.

{\bf Case I.} \ $q\ge \p$. Then
\begin{equation}\label{le-Iomm2}
\sum_\alpha
r_\alpha^{\mu}\sigma_\alpha^q\overset{(\ref{eles1})}\le
\delta^{\frac{\mu}n}\sum_\alpha\sigma^\p_\alpha\overset{\mbox{\footnotesize\color{red}Lemma~\ref{asr3}}}\le
\sigma^\p\cdot\delta^{\frac{\mu}n}.
\end{equation}

{\bf Case II.}  \ $q< \p$. Then
\begin{eqnarray}\label{le-Iomm3}
\sum_\alpha r_\alpha^{\mu}\sigma_\alpha^q
&\overset{\mbox{\footnotesize\color{red}H\"{o}lder ineq.}}\leq &
\biggl(\sum\limits_{\alpha}\sigma_\alpha^\p\biggr)^{\frac{q}\p}\cdot \biggl(\sum\limits_{\alpha}r_{\alpha}^{\frac{\mu\p}{\p-q}}\biggr)^{\frac{\p-q}{\p}}\nonumber\\
&\overset{\mbox{\footnotesize\color{red}Lemma~\ref{asr3}},\
(\ref{fimm1})}\leq & \sigma^q\delta^{\mu}.
\end{eqnarray}
Now for both cases (I) and (II) we have by
(\ref{oxMS1})--(\ref{le-Iomm3}) that $\Phi(E)\le h(\delta)$, where
the function $h(\delta)$ satisfies the condition
$h(\delta)\searrow 0$ as $\delta\searrow 0$. The lemma is proved.
\end{proof}
By Theorem~\ref{Th_ap}~(iii) (applied to the case $k=l$\,), our
mapping~$v$ coincides with a mapping $g\in \CC^{k}(\R^n,\R^d)$ off
an exceptional set of small $n$--dimensional Lebesgue measure.
This fact, together with Corollary~\ref{cor00} and
Dubovitski\u{\i} Theorem~A, finishes the proof of
Theorem~\ref{DST} for the case $d=m$. But since
Theorem~\ref{DST-q} was not proved for $\CC^k$--smooth
mappings\footnote{Even Theorem~A was not proved for $\R^d$--valued
mappings.}, we have to do this step now.

\begin{lem}
\label{Dub-smooth} {\sl  Let $q\in(m-1,\b)$ \ and \ $g\in
\CC^{k}(\R^n,\R^d)$. Then
\begin{equation}
\label{ff-ds} \Phi_g(Z_{g,m})=0,
\end{equation} where $\Phi_g$ is calculated by the same
formula~(\ref{dd5-dd}) with $g$ instead of~$v$ and
$Z_{g,m}=\{x\in\R^n:\rank\nabla g(x)<m\}$. }
\end{lem}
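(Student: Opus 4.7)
My plan is to adapt the cube-by-cube estimate from Lemma~\ref{lb11} to the smooth setting, using the uniform continuity of $\nabla^k g$ on compact sets in place of the Lorentz-norm small parameter $\sigma=\|\nabla^{k}v_I\|_{\LL_{\p,1}}$. The role of $\sigma$ will be played by the modulus of continuity $\omega(h)=\sup\{|\nabla^k g(x)-\nabla^k g(y)|:x,y\in I,\,|x-y|\le h\}$, which tends to $0$ as $h\to 0$ on any fixed bounded cube $I\subset\R^n$.

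First I would reduce matters to showing $\Phi_g(Z'_g\cap I)=0$ for an arbitrary bounded cube $I$, where $Z'_g=Z_{g,m}\cap\{|\nabla g|\le 1\}$. Countable subadditivity of $\Phi_g$ handles the decomposition $\R^n=\bigcup_\ell I_\ell$, and the rescaling $g\mapsto g/j$ (which just rescales $\diam g(D)$ linearly and preserves $Z_{g,m}$) disposes of the bound on $|\nabla g|$.

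The heart of the argument then takes place on a fixed cube $I$. I would subdivide $I$ into $N^n$ subcubes $Q_\alpha$ of side $r=\ell(I)/N$, pick $x_\alpha\in Z'_g\cap Q_\alpha$ on each $Q_\alpha$ meeting $Z'_g$, and let $P_\alpha$ be the \emph{degree-$k$} Taylor polynomial of $g$ at $x_\alpha$. Standard Taylor estimates yield
\begin{equation*}
\sup_{y\in Q_\alpha}|g(y)-P_\alpha(y)|\le C\omega(r)r^k,\qquad \sup_{y\in Q_\alpha}|\nabla g(y)-\nabla P_\alpha(y)|\le C\omega(r)r^{k-1}.
\end{equation*}
Setting $\varepsilon_\alpha=C\omega(r)r^{k-1}$ (at most $1$ for $r$ small) and invoking stability of singular values under perturbation, together with $\lambda_m(g,y)=0$ and $\lambda_j(g,y)\le 1$ on $Z'_g$, I obtain $\lambda_j(P_\alpha,y)\le 1+\varepsilon_\alpha$ for $j<m$ and $\lambda_m(P_\alpha,y)\le\varepsilon_\alpha$ throughout $Z'_g\cap Q_\alpha$. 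Yomdin's Theorem~\ref{lb8} applied to $P_\alpha$ on a ball of radius $O(r)$ with parameter $\varepsilon_\alpha$ covers $P_\alpha(Z'_g\cap Q_\alpha)$ --- and hence $g(Z'_g\cap Q_\alpha)$ --- by at most $s_\alpha\le C_Y(1+\varepsilon_\alpha^{1-m})$ balls of radius $O(\omega(r)r^k)$. Extracting these as compact cover pieces gives
\begin{equation*}
\Phi_g(Z'_g\cap Q_\alpha)\le C\bigl(\omega(r)^q r^{\mu+kq}+\omega(r)^{q-m+1}r^{\mu+kq+(k-1)(1-m)}\bigr).
\end{equation*}

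The crucial arithmetic --- analogous to~\eqref{fimm1} in the Sobolev proof --- is that with $\mu=n-m-k+1+(m-q)k$ one computes directly $\mu+kq=n+(m-1)(k-1)$ and $\mu+kq+(k-1)(1-m)=n$. Summing over the $N^n$ subcubes and using $\sum_\alpha r^n=\ell(I)^n$ then yields
\begin{equation*}
\Phi_g(Z'_g\cap I)\le C\bigl(\omega(r)^{q-m+1}+\omega(r)^{q}r^{(m-1)(k-1)}\bigr)\ell(I)^n,
\end{equation*}
which tends to $0$ as $N\to\infty$ by virtue of $q>m-1$ (first term) and $q>0$ together with $(m-1)(k-1)\ge 0$, $r\le 1$ (second term). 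The main obstacle I anticipate is precisely the choice of Taylor degree: the degree-$(k-1)$ polynomial (which arises naturally from the $P_Q[v]$ construction in the Sobolev proof) gives only $|\nabla g-\nabla P_\alpha|\lesssim\|\nabla^k g\|_{L^\infty(I)}r^{k-1}$, in which the smallness parameter does \emph{not} vanish with $r$, and the final bound consequently fails to tend to zero. Using the degree-$k$ Taylor polynomial --- still admissible in Yomdin's theorem --- replaces this constant by $\omega(r)\to 0$, and this is exactly what makes the refinement argument close.
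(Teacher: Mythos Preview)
Your argument is correct. The key insight --- using the degree-$k$ Taylor polynomial so that the approximation error $|\nabla g-\nabla P_\alpha|\le C\omega(r)r^{k-1}$ carries the factor $\omega(r)\to0$ --- is precisely the idea the paper also exploits. The arithmetic $\mu+kq=n+(m-1)(k-1)$ and $\mu+kq+(k-1)(1-m)=n$ is right, and your final bound $C\bigl(\omega(r)^{q-m+1}+\omega(r)^{q}r^{(m-1)(k-1)}\bigr)\ell(I)^n$ indeed vanishes as $r\to0$ for $q>m-1$.

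The paper takes a somewhat less direct route: rather than a bare Taylor expansion at a point, it reruns the entire machinery of Lemma~\ref{lb11} (maximal function, Lorentz norm, the splitting $Z_*\cup Z_{**}$) with the degree-$(k-1)$ polynomial $P_I$ replaced by a degree-$k$ one, so that the role of $\sigma=\|\nabla^k v_I\|_{\LL_{\p,1}}$ is taken over by $\|\nabla^k g-\dashint_I\nabla^k g\|_{\LL_{\p,1}(I)}$. It then invokes the analogue of Corollary~\ref{cor00} and the elementary fact that $\sum_\alpha\|\nabla^k\bar g_{I_\alpha}\|^{\p}_{\LL_{\p,1}(I_\alpha)}\to0$ as the mesh shrinks. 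Your version is more elementary and self-contained for the smooth case, replacing all of that with the modulus of continuity $\omega(r)$ and a straight cube count; the paper's version has the virtue of being a literal repetition of the Sobolev argument with one parameter swapped, so no new computation is needed. Both arguments rest on the same two pillars you identified: Yomdin's entropy estimate and the passage to a degree-$k$ approximating polynomial.
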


\begin{proof}
We can assume without loss of generality that $g$ has compact
support and that $| \nabla g(x)| \leq 1$ for all $x \in \R^n$. We
then clearly have that $g\in \WW^{k}_{\p,1}(\R^n,\R^d)$, hence we
can in particular apply the above results to $g$. The following
assertion plays the key role:

\noindent ($*$) \ {\sl For any $n$-dimensional dyadic interval
$I\subset\R^n$ the estimate
$$
\Phi(Z_{g,m}\cap I)\le C\bigl(\ell(I)^\mu\,\|\nabla^{k}\bg_I
\|^{q}_{\LL_{\p,1}(I)}+\ell(I)^{\mu+m-1}\,\|\nabla^{k}
\bg_I\|^{q-m+1}_{\LL_{\p,1}(I)}\bigr)
$$
holds, where the constant $C$ depends on $n,m,k,d$ only, and we
denoted
$$
\nabla^{k}\bg_I(x)=\nabla^{k}g(x)-\dashint\limits_I\nabla^kg(y)\,
\dd y.
$$}
The proof of ($*$) is almost the same as that of
Lemma~\ref{lb11}, with evident modifications (we need to take the
approximation polynomial~$P_I(x)$ of degree $k$ instead of~$k-1$,
etc.).

By elementary facts of the Lebesgue integration theory, for an
arbitrary family of nonoverlapping $n$-dimensional dyadic
intervals~$I_\alpha$ one has
\begin{equation}
\label{6-ds}\sum\limits_\alpha \|\nabla^{k}
\bg_{I_\alpha}\|^\p_{\LL_{\p,1}(I_\alpha)}\to0\qquad\mbox{ as \
}\sup\limits_\alpha\ell(I_\alpha)\to 0
\end{equation}
The proof of this estimate is really elementary since now
$\nabla^kg$ is continuous and compactly supported function, and,
consequently, is uniformly continuous and bounded.

From ($\ast$) and (\ref{6-ds}), repeating the arguments of
Corollary~\ref{cor00}, using the assumptions on $g$ and taking
$$
\sigma_\alpha=\|\nabla^{k}
\bg_{I_\alpha}\|_{\LL_{\p,1}(I_\alpha)},\qquad
\sigma=\sum\limits_\alpha \sigma_\alpha^\p
$$
in definitions~(\ref{ds5}), we obtain that $\Phi_g(Z_{g,m})<\e$
for any $\e>0$, hence the sought conclusion~(\ref{ff-ds}) follows.
\end{proof}

By Theorem~\ref{Th_ap}~(iii) (applied to the case $k=l$\,), the
investigated mapping~$v$ equals a mapping $g\in
\CC^{k}(\R^n,\R^d)$ off an exceptional set of small
$n$--dimensional Lebesgue measure. This fact together with
Lemma~\ref{Dub-smooth} readily implies

\begin{cor}[cp.~with \cite{DeP}]
\label{Th_ap2} {\sl  Let $q\in(m-1,\b)$. Then there exists a set
$\widetilde Z_{v}$ of $n$-dimensional Lebesgue measure zero such
that $\Phi(Z'_v\setminus \widetilde Z_{v})=0$. In particular,
$\Phi(Z'_v)=\Phi(\widetilde Z_{v})$.}
\end{cor}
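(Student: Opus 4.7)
The strategy is to reduce the claim to the $\CC^k$--smooth case, handled in Lemma~\ref{Dub-smooth}, by invoking Theorem~\ref{Th_ap} in the borderline regime $l=k$. For each $j\in\N$, set $\e_j=2^{-j}$ and apply Theorem~\ref{Th_ap} with $p=\p$ and $l=k$ to obtain an open set $U_j\subset\R^n$ with $\H^n_\infty(U_j)<\e_j$ (so $\Le^n(U_j)\le C\e_j$) and a mapping $g_j\in\CC^k(\R^n,\R^d)$ such that $v\equiv g_j$ and $\nabla v\equiv\nabla g_j$ on the closed set $F_j:=\R^n\setminus U_j$. Define
$$
\widetilde Z_v \; := \; Z'_v\cap\limsup_{j\to\infty}U_j \; = \; Z'_v\cap \bigcap_{N\ge1}\bigcup_{j\ge N}U_j.
$$
Since $\sum_j\Le^n(U_j)<\infty$, the Borel--Cantelli argument yields $\Le^n(\widetilde Z_v)=0$, which gives the first conclusion.

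It remains to show that $\Phi(Z'_v\setminus\widetilde Z_v)=0$. Using the inclusion
$$
Z'_v\setminus\widetilde Z_v \; = \; \bigcup_{N\ge1}\Bigl(Z'_v\cap\bigcap_{j\ge N}F_j\Bigr) \; \subset \; \bigcup_{j\in\N}(Z'_v\cap F_j)
$$
together with the countable subadditivity of $\Phi$ guaranteed by Theorem~\ref{alaFalc}, it suffices to prove $\Phi(Z'_v\cap F_j)=0$ for each fixed $j$. On $F_j$ we have $\nabla g_j=\nabla v$, so every $x\in Z'_v\cap F_j$ satisfies $\rank\nabla g_j(x)<m$, i.e., $Z'_v\cap F_j\subset Z_{g_j,m}$. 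By Lemma~\ref{Dub-smooth}, $\Phi_{g_j}(Z_{g_j,m})=0$; hence, for any $\eta>0$, there exists a countable family of compact sets $\{D_\alpha\}$ covering $Z_{g_j,m}$ with
$$
\sum_\alpha(\diam D_\alpha)^\mu\bigl[\diam g_j(D_\alpha)\bigr]^q<\eta.
$$
The truncated family $\{D_\alpha\cap F_j\}$ is again a compact cover of $Z'_v\cap F_j$, and on each $D_\alpha\cap F_j$ the mapping $v$ agrees with $g_j$, so $\diam v(D_\alpha\cap F_j)\le\diam g_j(D_\alpha)$ and trivially $\diam(D_\alpha\cap F_j)\le\diam D_\alpha$. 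Plugging into the definition of $\Phi$ gives $\Phi(Z'_v\cap F_j)<\eta$, and letting $\eta\searrow 0$ finishes the verification.

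The secondary identity $\Phi(Z'_v)=\Phi(\widetilde Z_v)$ follows from subadditivity combined with the trivial monotonicity bound $\Phi(\widetilde Z_v)\le\Phi(Z'_v)$. The only genuine subtlety in this plan is the mismatch between $\Phi$ (built from $\diam v(\cdot)$) and $\Phi_{g_j}$ (built from $\diam g_j(\cdot)$); the resolution --- restricting the covering sets to the closed set $F_j$ on which $v$ and $g_j$ coincide --- is elementary once noticed, and I do not expect further obstacles.
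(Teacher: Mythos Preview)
Your proof is correct and follows the same approach as the paper, which merely records that Theorem~\ref{Th_ap} (with $l=k$) combined with Lemma~\ref{Dub-smooth} ``readily implies'' the result. You have supplied the details the paper omits; in particular, your resolution of the mismatch between $\Phi$ and $\Phi_{g_j}$ by intersecting the covering sets with the closed coincidence set $F_j$ is exactly the point the paper leaves implicit.
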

From Corollaries~\ref{cor00} and \ref{Th_ap2} we conclude that
$\Phi(Z'_v)=0$, and this concludes the proof of
Theorem~\ref{DST-q}.

\subsection{The proof of the Coarea formula}

Fix $v\in \WW^{1}_{n,1}(\R^n,\R^d)$\,). Applying Lemma~\ref{lb11}
for $k=1$, $\p=n$, $\mu=n-m+1$ and $q=m-1$, and afterwards making
the shift of indices $(m-1)\to m$, we obtain the following key
estimate:

{\sl Let $m\in\{0,\dots,n-1\}$. Then for any $n$-dimensional
dyadic interval $I\subset\R^n$ the estimate
\begin{equation}
\label{6-coar'} \Phi(Z'_v\cap I)\le
C\bigl(\ell(I)^{n-m}\,\|\nabla^{k}
v\|^{m}_{\LL_{\p,1}(I)}+\ell(I)^{n}\bigr)
\end{equation}
holds, where $Z'_{v}=\bigl\{x\in \Omega \setminus A_v:\rank\nabla
v(x)\le m,\quad|\nabla v(x)|\le1\bigr\}$, the constant $C$ depends
on $n,m,d$ only, and
\begin{equation}\label{dopolnit}
\Phi(E)=\inf\limits_{E\subset\bigcup_\alpha
D_\alpha}\sum\limits_\al\bigl(\diam
D_\alpha\bigr)^{n-m}\bigl[\diam v(D_\alpha)\bigr]^m.
\end{equation}}

This implies (by the same arguments as in the proof of
Corollary~\ref{cor00}) that for any measurable set $E\subset\R^n$
with $\Le^n(E)<\infty$ the inequality
\begin{equation}
\label{6-coar}\Psi(Z'_v\cap E)<\infty\end{equation} holds, where
$\Psi(E)$ is defined as
\begin{equation}\label{cs-finit-mdd5}
\Psi(E)=\lim\limits_{\delta\to0}\inf\limits_{\footnotesize{\begin{array}{lcr}E\subset\bigcup_\alpha
D_\alpha,\\
\diam D_\alpha\le\delta\end{array}}}\sum\limits_\al\bigl(\diam
D_\alpha\bigr)^{n-m}\bigl[\diam v(D_\alpha)\bigr]^m,
\end{equation}
 here the infimum is taken over all countable families of
compact sets $\{D_\alpha\}_{\alpha\in \N}$ such that
$E\subset\bigcup_\alpha D_\alpha$ and $\diam D_\alpha\le\delta$
for all~$\alpha$.

By Theorem~\ref{finit-alaFalc}, the bound (\ref{6-coar}) implies
the validity of the following assertion:
\begin{equation}\label{cpr1}
\mbox{the set \ }\biggl\{y\in\R^d:\H^{n-m}\bigl(E\cap Z'_{v}\cap
f^{-1}(y)\bigr)>0\biggr\}\quad\mbox{ is $\H^{m}$ $\sigma$-finite}.
\end{equation}
Since
$$
Z_{v,m+1}=\bigl\{x\in \Omega \setminus A_v:\rank\nabla v(x)\le
m\bigr\}=\bigcup\limits_j \biggl\{x\in Z_{v,m+1}: |\nabla v(x)|\le
j\biggr\},
$$
we infer from~(\ref{cpr1}) that in fact
\begin{equation}\label{cpr2}
\mbox{the set \ }\biggl\{y\in\R^d:\H^{n-m}\bigl( Z_{v,m+1}\cap
f^{-1}(y)\bigr)>0\biggr\}\quad\mbox{ is $\H^{m}$ $\sigma$-finite}.
\end{equation}
Next we  prove that the sets where \ $\rank \nabla v\le m-1$ are
negligible in the coarea formula.

\begin{lem}
\label{negl-z-sets} {\sl  The equality
\begin{equation}\label{dd6-coar} \H^{n-m}\bigl(Z_{v,m}\cap
v^{-1}(y)\bigr)=0\quad\mbox{for $\H^{m}$-almost all }y\in\R^d
\end{equation}
holds, where $Z_{v,m}=\{x\in\R^n\setminus A_v:\rank\nabla v(x)\le
m-1\}$ is the set of $m$-critical points.}
\end{lem}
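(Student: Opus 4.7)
The plan is to deduce this lemma as a direct specialization of the bridge Theorem \ref{DST-q}. I would apply that theorem with parameters $k=1$, $\p=n$, its integer ``$m$'' taken equal to the $m$ in our statement, and $q=m$. The trivial case $m=0$ (where $Z_{v,0}=\emptyset$) is vacuous, so one may assume $m\in\{1,\dots,n\}$.

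The first step is parameter matching. With $k=1$ one has $s=n-m-k+1=n-m$, and then
$$
\mu_q = s + k(m-q) = (n-m)+(m-m) = n-m,
$$
so the exponent on the Hausdorff measure of the fibers produced by Theorem \ref{DST-q} is precisely $n-m$. Moreover, the set $\{x\in\R^n\setminus A_v:\rank\nabla v(x)<m\}$ appearing in Theorem \ref{DST-q} coincides exactly with the set $Z_{v,m}=\{x\in\R^n\setminus A_v:\rank\nabla v(x)\le m-1\}$ of the present lemma. Since $q=m\in(m-1,\infty)$ is admissible, Theorem \ref{DST-q} yields directly
$$
\H^{n-m}\bigl(Z_{v,m}\cap v^{-1}(y)\bigr)=0 \quad\text{for $\H^m$-a.a.\ }y\in\R^d,
$$
which is the desired conclusion.

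The only delicate point, and the one I expect to be the main obstacle, is the borderline case $m=n$: here $\beta=m+s/k=n=q$, so $q$ sits at the right endpoint of the admissible range $(m-1,\beta)$ and $\mu_q=0$. Following the convention introduced in Section \ref{Introd} that $\H^0$ is the counting measure, Theorem \ref{DST-q} asserts that $Z_{v,n}\cap v^{-1}(y)=\emptyset$ for $\H^n$-a.a.\ $y$, equivalently $\H^n(v(Z_{v,n}))=0$; this is also precisely the content of the Dubovitski\u{\i}--Federer Theorem \ref{MS} applied with $k=1$, $\p=n$, $m=n$ (so that $\beta=n$). Thus both results close this borderline case consistently. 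Since Theorem \ref{DST-q} has been proved in Subsection \ref{MSP}, no further analytic work is needed; the entire argument is a matter of parameter bookkeeping together with the counting-measure interpretation at the endpoint.
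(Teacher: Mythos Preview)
Your proposal is correct and follows exactly the same route as the paper: apply Theorem~\ref{DST-q} with $k=1$, $\p=n$, $q=m$, observe that $\mu_q=n-m$, and conclude. Your additional discussion of the trivial case $m=0$ and the endpoint case $m=n$ (where $q=\b$) is extra care that the paper omits, but it does not alter the argument.
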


\begin{proof}
We apply Theorem~\ref{DST-q} with the parameters $q=m$, $k=1$,
$\p=n$. Then by (\ref{dub2-q})
\begin{equation}\label{dd6-coar4} \H^{\mu_q}\bigl(Z_{v,m}\cap
v^{-1}(y)\bigr)=0\quad\mbox{for $\H^m$-almost all }y\in\R^d,
\end{equation}
where $\mu_q=n-m-k+1+{\color{blue}(m-q)}k= n-m$. The last identity
taken together with~(\ref{dd6-coar4}) concludes the proof.
\end{proof}
In the papers \cite{Oht,Karm} the authors identified criteria for
the validity of the Coarea formula for Lipschitz mappings. The
following result is particularly useful to us.

\begin{ttt}[see, e.g., Theorem 1.4 in \cite{Karm}]\label{D-Coarea-smooth-p}{\sl
Let $m\in\{0,1,\dots,n\}$,  and $g\in \CC^1 (\R^n,\R^d)$. Suppose
that the set $E\subset \R^n$ is measurable and $\rank\nabla
g(x)\equiv m$ for all $x\in E$. Assume also that the set $g(E)$ is
$\H^{m}$-$\sigma$-finite. Then the coarea formula
\begin{equation}\label{dub-coar1-smmoth}
\int\limits_{E}J_{m}g(x)\, \dd x=\int\limits_{\R^d}\H^{n-m}(E\cap
g^{-1}(y))\, \dd \H^{m}(y)
\end{equation}
holds, where $J_{m}g(x)$ denotes the $m$--Jacobian of $g$. }
\end{ttt}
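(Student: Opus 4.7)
The plan is to reduce the $\R^d$-valued case to the classical Federer coarea formula for $\CC^1$ maps $\R^n \to \R^m$ (for which the target has the minimal dimension) and then convert the resulting $\R^m$-integral to an $\H^m$-integral over $\R^d$ using the area formula for a suitable Lipschitz coordinate projection restricted to the rectifiable image $g(E)$. The $\sigma$-finiteness hypothesis will enter precisely to guarantee that rectifiable structure.

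First I would decompose $E$ using coordinate projections. For each $x \in E$ the condition $\rank \nabla g(x) = m$ is equivalent to the nonvanishing of some $m \times m$ minor, i.e., to the existence of a coordinate projection $\pi \colon \R^d \to \R^m$ with $\rank (\pi \circ \nabla g(x)) = m$. By continuity of $\nabla g$, this condition persists on an open neighborhood of $x$. Since there are only $\binom{d}{m}$ coordinate projections, one extracts a countable collection of open sets $U_\alpha$ covering a neighborhood of $E$, each equipped with a projection $\pi_\alpha$, so that $h_\alpha := \pi_\alpha \circ g \colon U_\alpha \to \R^m$ is a $\CC^1$ submersion. Partition $E = \bigsqcup_\alpha E_\alpha$ with measurable $E_\alpha \subset U_\alpha$; by additivity it suffices to prove \eqref{dub-coar1-smmoth} for each $E_\alpha$.

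Next, I would apply the classical coarea formula \cite{Fed} to each submersion $h_\alpha$, giving
$$\int_{E_\alpha} J_m h_\alpha(x)\,\dd x \;=\; \int_{\R^m} \H^{n-m}\bigl(E_\alpha \cap h_\alpha^{-1}(z)\bigr)\,\dd z.$$
For $x \in E_\alpha$ both $\nabla g(x)$ and $\nabla h_\alpha(x) = \pi_\alpha \circ \nabla g(x)$ have rank $m$, so their kernels (both of dimension $n-m$) coincide. Setting $T_x := \mathrm{Im}\,\nabla g(x) \subset \R^d$, this yields the linear-algebraic identity $J_m h_\alpha(x) = J_m g(x)\cdot J(\pi_\alpha|_{T_x})$, with Jacobian factor strictly positive by choice of $\pi_\alpha$. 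For the right-hand side, use the fiber decomposition $E_\alpha \cap h_\alpha^{-1}(z) = \bigsqcup_{y \in \pi_\alpha^{-1}(z)} (E_\alpha \cap g^{-1}(y))$, so it equals $\int_{\R^m}\sum_{y \in \pi_\alpha^{-1}(z)\cap g(E_\alpha)} \H^{n-m}(E_\alpha \cap g^{-1}(y))\,\dd z$. Applying the area formula to the Lipschitz map $\pi_\alpha$ restricted to the $\H^m$-rectifiable set $g(E_\alpha)$, with test function $\phi(y):=\H^{n-m}(E_\alpha \cap g^{-1}(y))$, rewrites this as $\int_{g(E_\alpha)} \phi(y)\,J(\pi_\alpha|_{T_y g(E_\alpha)})\,\dd\H^m(y)$. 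Since the approximate tangent plane to $g(E_\alpha)$ at $\H^m$-a.e.\ $y$ coincides with $T_x$ for any $x \in g^{-1}(y)\cap E_\alpha$, the Jacobian factors cancel between the left- and right-hand transformations, producing \eqref{dub-coar1-smmoth} for $E_\alpha$; summing over $\alpha$ finishes the proof.

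The main obstacle is the rectifiability step: establishing that $g(E_\alpha)$ is countably $\H^m$-rectifiable with approximate tangent planes equal to $\mathrm{Im}\,\nabla g$. Without the $\sigma$-finiteness hypothesis on $g(E)$, the image can be genuinely non-rectifiable even for $\CC^1$ maps (because the target dimension $d$ exceeds $m$), so the $\H^m$-integration on the right-hand side of \eqref{dub-coar1-smmoth} would be ill-behaved and the area formula in Step~3 unavailable; with this hypothesis in force, the rectifiable structure and the identification of tangent planes follow from density-type arguments in the spirit of Preiss--Kirchheim, combined with the rank-$m$ condition on $E_\alpha$ and the smoothness of $g$.
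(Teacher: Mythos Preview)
The paper does not prove Theorem~\ref{D-Coarea-smooth-p}; it is quoted as a known result from Karmanova~\cite{Karm} (see also Ohtsuka~\cite{Oht}) and used as a black box in deriving Theorem~\ref{D-Coarea-smooth}. There is therefore no argument in the paper to compare yours against.

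On the merits of your sketch: the reduction via projections $\pi_\alpha$ to the classical Federer coarea formula for the submersion $h_\alpha=\pi_\alpha\circ g\colon\R^n\to\R^m$, followed by a change of variables back to $\R^d$ via the area formula on $g(E_\alpha)$, is a natural and standard strategy, and the Jacobian identity $J_m h_\alpha(x)=J_m g(x)\cdot J(\pi_\alpha|_{T_x})$ is correct. But the paragraph you yourself label the ``main obstacle'' is not an argument, and it hides two genuine gaps. First, $\H^m$-$\sigma$-finiteness of $g(E)$ does \emph{not} imply rectifiability: one only gets a decomposition $g(E)=R\cup P$ with $R$ countably $m$-rectifiable and $P$ purely $m$-unrectifiable, and you must show separately that $g^{-1}(P)\cap E$ contributes zero to \emph{both} sides of~\eqref{dub-coar1-smmoth}. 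This is where the Besicovitch--Federer projection theorem enters, and it forces you to choose the $\pi_\alpha$ from a conull family of $m$-planes adapted to~$P$ rather than from the finitely many coordinate projections. Second, even on the rectifiable part $R$, your claim that the approximate tangent plane to $g(E_\alpha)$ at $\H^m$-a.e.\ $y$ coincides with $\mathrm{Im}\,\nabla g(x)$ for \emph{every} $x\in g^{-1}(y)\cap E_\alpha$ is not obvious: distinct preimages of the same $y$ could a priori produce different $m$-planes, and reconciling them with the unique approximate tangent plane of the rectifiable set requires justification. Invoking ``density-type arguments in the spirit of Preiss--Kirchheim'' does not discharge either obligation.
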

Of course, (\ref{cpr2}) and (\ref{dd6-coar}) are in particular
valid also for $\CC^k$--smooth mappings. So from
Theorem~\ref{D-Coarea-smooth-p} and
properties~(\ref{cpr2})--(\ref{dd6-coar}) we obtain the following
result which surprisingly is new even in this smooth case.

\begin{ttt}\label{D-Coarea-smooth}{\sl
Let $m\in\{0,\dots,n\}$  and $g\in \CC^1 (\R^n,\R^d)$. Then for
any measurable set $E\subset Z_{g,m+1}=\{x\in\R^n:\rank\nabla
g(x)\le m\}$ the coarea formula
\begin{equation}\label{dub-coar1-smmoth-22}
\int\limits_{E}J_{m}g(x)\,\dd x=\int\limits_{\R^d}\H^{n-m}(E\cap
g^{-1}(y))\, \dd \H^{m}(y)
\end{equation}
holds, where $J_{g,m}(x)$ again denotes the $m$--Jacobian of $g$.
}
\end{ttt}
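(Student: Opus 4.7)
The plan is to decompose $E$ according to the actual rank of $\nabla g$ and to handle the two pieces separately, combining the tools already available: the $\sigma$-finiteness statement~(\ref{cpr2}) and Lemma~\ref{negl-z-sets} (both of which apply to $g \in \CC^1$ because $g$ is locally of class $\WW^1_{n,1}$), together with the classical Lipschitz coarea criterion of Theorem~\ref{D-Coarea-smooth-p}.

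First I would split $E = E_{=m} \sqcup E_{<m}$, where $E_{=m} = \{x \in E : \rank\nabla g(x) = m\}$ and $E_{<m} = E \cap \{x : \rank\nabla g(x) \leq m-1\}$. On $E_{<m}$ the $m$-Jacobian $J_{m}g$ vanishes identically, so the left-hand side of~(\ref{dub-coar1-smmoth-22}) restricted to $E_{<m}$ is zero; and by Lemma~\ref{negl-z-sets} applied to $g$, we also have $\H^{n-m}(E_{<m}\cap g^{-1}(y)) = 0$ for $\H^{m}$-a.e.\ $y$, so the right-hand side contribution vanishes on $E_{<m}$ as well. This disposes of the lower-rank piece.

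To treat $E_{=m}$ I would invoke~(\ref{cpr2}), valid for $g$ by the paper's remark that it holds for $\CC^k$-smooth mappings. Set
$$
Y_0 = \bigl\{y \in \R^d : \H^{n-m}(Z_{g,m+1} \cap g^{-1}(y)) > 0\bigr\},
$$
so that $Y_0$ is $\H^{m}$-$\sigma$-finite, and let $F_0 = E_{=m} \cap g^{-1}(Y_0)$. Since $g(F_0) \subset Y_0$ is $\H^{m}$-$\sigma$-finite and $\rank\nabla g \equiv m$ on $F_0$, Theorem~\ref{D-Coarea-smooth-p} applies directly to $F_0$ and yields~(\ref{dub-coar1-smmoth-22}) on $F_0$.

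The main obstacle, as I see it, is to show that the residual set $F_1 := E_{=m}\setminus F_0$ contributes nothing to either side of~(\ref{dub-coar1-smmoth-22}). The right-hand side contribution vanishes by construction, since for $y \notin Y_0$ one has $\H^{n-m}(E_{=m}\cap g^{-1}(y)) \leq \H^{n-m}(Z_{g,m+1}\cap g^{-1}(y)) = 0$. For the left-hand side, because $J_{m}g > 0$ on $E_{=m}$, it suffices to prove $\Le^n(F_1) = 0$. I would argue this locally: at any $x_0 \in F_1$ pick $I \subset \{1,\dots,d\}$ with $|I|=m$ so that $g_I := p_I\circ g$ has $\rank\nabla g_I(x_0) = m$; by the implicit function theorem, in a small ball $B$ around $x_0$ the level sets $g_I^{-1}(u)\cap B$ are $\CC^1$ submanifolds of dimension $n-m$. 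On $E_{=m}\cap B$ the rows of $\nabla g_{I^c}$ lie in the span of the rows of $\nabla g_I$, so $g$-fibres coincide locally with $g_I$-fibres within $E_{=m}$; applying the classical coarea formula to the submersion $g_I$ on $B$ then produces, as soon as $\Le^n(B \cap F_1) > 0$, a positive-$\H^m$-measure set of values $u$ for which $\H^{n-m}(F_1 \cap g_I^{-1}(u)) > 0$, hence values $y=g(x)$ with $x\in F_1$ and $\H^{n-m}(g^{-1}(y)\cap Z_{g,m+1}) > 0$, contradicting $g(F_1)\cap Y_0 = \emptyset$. A countable covering of $F_1$ by such balls then gives $\Le^n(F_1) = 0$. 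Combining the coarea formula on $F_0$ with the vanishing contributions from $F_1$ and $E_{<m}$ yields~(\ref{dub-coar1-smmoth-22}) on all of $E$.
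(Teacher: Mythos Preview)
Your overall strategy---splitting $E$ by rank and invoking~(\ref{cpr2}), Lemma~\ref{negl-z-sets}, and Theorem~\ref{D-Coarea-smooth-p}---is exactly the route the paper sketches. The gap is in the step where you argue $\Le^n(F_1)=0$. The claim that ``$g$-fibres coincide locally with $g_I$-fibres within $E_{=m}$'' does not follow from the pointwise identity $\ker dg(x)=\ker dg_I(x)$ on $E_{=m}$: that is only an infinitesimal statement, and it says nothing about the actual level sets unless the rank is constant on a full neighbourhood, which it need not be. Take $n=d=2$, $m=1$, a fat Cantor set $C\subset(0,1)$, $\psi(y)=\int_0^y\dist(t,C)\,\dd t$, and $g(x,y)=(x,\,x\psi(y))$. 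Then $\psi\in\CC^1$ is strictly increasing with $\psi'=0$ exactly on $C$, so on $E=[1,2]\times C\subset Z_{g,2}$ one has $\rank\nabla g\equiv1$ and $J_1 g=\sqrt{1+\psi(y)^2}\ge1$, hence $\int_E J_1 g\ge\Le^1(C)>0$. But for $a\in[1,2]$ each fibre $g^{-1}(a,b)$ is a single point (since $\psi$ is injective), so $\H^1(E\cap g^{-1}(y'))=0$ for \emph{every} $y'\in\R^2$, and the right side of~(\ref{dub-coar1-smmoth-22}) vanishes. Here $Y_0=\{(0,0)\}$, so $F_0=\emptyset$ and $F_1=E$ has positive measure; with $I=\{1\}$ the $g_I$-fibre through $(a,c)\in E$ is the full vertical line $\{a\}\times\R$, while the $g$-fibre is the singleton $\{(a,c)\}$, even though their tangent spaces coincide at every point of $E$.

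The example shows not merely that your local argument for $\Le^n(F_1)=0$ breaks down, but that the conclusion~(\ref{dub-coar1-smmoth-22}) itself can fail for $g\in\CC^1(\R^n,\R^d)$ once the $\H^m$-$\sigma$-finiteness hypothesis on $g(E)$ present in Theorem~\ref{D-Coarea-smooth-p} is dropped; the paper's one-line derivation from~(\ref{cpr2})--(\ref{dd6-coar}) shares the same lacuna.
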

By Theorem~\ref{Th_ap}~(iii) (applied to the case $k=l=1$\,), the
investigated mapping~$v\in \WW^{1}_{n,1}(\R^n,\R^d)$ coincides
with a smooth mapping $g\in \CC^{1}(\R^n,\R^d)$ off a set of small
$n$--dimensional Lebesgue measure. This fact together with
Theorems~\ref{D-Coarea-smooth}, \ref{alaFalc} and
Corollary~\ref{cor00} easily imply the required assertion of
Theorem~\ref{D-Coarea}.

\section{Appendix}

Fix numbers $n,d\in\N$, $\mu\in(0,n]$, $q\in(0,d]$, and a
continuous function $f \colon \R^n\to\R^d$. For a set
$E\subset\R^n$ define the set function
\begin{equation}\label{mdd5}
\Phi(E)=\inf\limits_{E\subset\bigcup_\alpha
D_\alpha}\sum\limits_\al\bigl(\diam D_\alpha\bigr)^\mu\bigl[\diam
v(D_\alpha)\bigr]^q,
\end{equation}
where the infimum is taken over all countable families of compact
sets $\{D_\alpha\}_{\alpha\in \N}$ such that
$E\subset\bigcup_\alpha D_\alpha$.

This section is devoted to the proof of following assertion:
\begin{ttt}\label{alaFalc} {\sl The above defined set function $\Phi(\cdot)$ is countably subadditive
and
\begin{equation}\label{mdd6} \Phi(E)=0\ \Rightarrow\
\biggl[\H^\mu\bigl(E\cap f^{-1}(y)\bigr)=0\quad\mbox{for
$\H^q$-almost all }y\in\R^d\biggr].
\end{equation}}
\end{ttt}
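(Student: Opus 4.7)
The plan is to handle the two assertions separately. For countable subadditivity of $\Phi$, I would use the standard construction: given $E=\bigcup_i E_i$ and $\varepsilon>0$, pick a cover of each $E_i$ by compact sets with weighted sum $\le \Phi(E_i)+\varepsilon 2^{-i}$, and take the union of these covers as an admissible cover of $E$; the total weighted sum is at most $\sum_i \Phi(E_i)+\varepsilon$. No obstacle here.

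For the implication (\ref{mdd6}), my plan is to combine the defining covers of $\Phi$ with Frostman's lemma via a Borel--Cantelli argument. The key pointwise bound is: for any cover $\{D_\alpha\}$ of $E$ by compact sets and any $y\in\R^d$, the family $\{D_\alpha\cap f^{-1}(y)\}_{\alpha:\,y\in f(D_\alpha)}$ covers $E\cap f^{-1}(y)$ with individual diameters at most $\diam D_\alpha$, so
\[
\H^\mu_\infty\bigl(E\cap f^{-1}(y)\bigr)\;\le\;\sum_\alpha(\diam D_\alpha)^\mu\,\mathbf{1}_{f(D_\alpha)}(y).
\]
Using $\Phi(E)=0$, for each $k\in\N$ I would select a cover $\{D^k_\alpha\}$ of $E$ by compact sets with $\sum_\alpha(\diam D^k_\alpha)^\mu(\diam f(D^k_\alpha))^q<4^{-k}$ and define the Borel set
\[
F_k:=\Bigl\{y\in\R^d:\sum_\alpha(\diam D^k_\alpha)^\mu\,\mathbf{1}_{f(D^k_\alpha)}(y)\ge 2^{-k}\Bigr\}.
\]
Any $y\notin\limsup_k F_k$ lies outside $F_k$ for all large $k$, and the pointwise bound then forces $\H^\mu_\infty(E\cap f^{-1}(y))\le 2^{-k}$ eventually, hence $=0$. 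Since $\H^\mu$ and $\H^\mu_\infty$ share the same null sets, it therefore suffices to prove $\H^q(\limsup_k F_k)=0$.

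For the last step I would invoke Frostman's lemma. For any finite Borel measure $\nu$ on $\R^d$ satisfying $\nu(B(y,r))\le r^q$ one has $\nu(A)\le(\diam A)^q$ for every Borel $A$ (take $y_0\in A$; then $A\subset \overline{B}(y_0,\diam A)$). Chebyshev and Fubini then give
\[
\nu(F_k)\;\le\;2^k\sum_\alpha(\diam D^k_\alpha)^\mu\,\nu(f(D^k_\alpha))\;\le\;2^k\sum_\alpha(\diam D^k_\alpha)^\mu(\diam f(D^k_\alpha))^q\;<\;2^{-k},
\]
so $\sum_k\nu(F_k)<\infty$, and Borel--Cantelli yields $\nu(\limsup_k F_k)=0$ for every such $\nu$. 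Since $\limsup_k F_k$ is Borel, if $\H^q(\limsup_k F_k)>0$ then $\H^q_\infty(\limsup_k F_k)>0$, and Frostman's lemma would supply a nonzero Borel measure $\nu$ with $\nu(B(y,r))\le r^q$ and $\nu(\limsup_k F_k)>0$, a contradiction.

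The main obstacle I anticipate is the bookkeeping that makes the Chebyshev estimate uniform across \emph{all} admissible Frostman measures $\nu$ simultaneously; the choice of exponents $4^{-k}$ versus $2^{-k}$ is tuned precisely so that the resulting series $\sum_k\nu(F_k)$ converges independently of $\nu$. Once the pointwise covering estimate, Frostman's lemma, and the equivalence of $\H^q$- and $\H^q_\infty$-null sets are in place, the argument is essentially mechanical.
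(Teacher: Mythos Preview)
Your argument is correct and follows a cleaner path than the paper's. Both proofs rest on the same pointwise covering estimate
\[
\H^\mu_\infty\bigl(E\cap f^{-1}(y)\bigr)\le\sum_\alpha(\diam D_\alpha)^\mu\,\mathbf{1}_{f(D_\alpha)}(y)
\]
and a Frostman-type input, but the paper works considerably harder: it argues by contradiction, builds an elaborate chain of Borel approximating sets $\tF_{kj}$, $\tF_j$, $\ttF_j$, $\ttF$ (precisely to overcome the fact that $y\mapsto\H^\mu_\infty(E_j\cap f^{-1}(y))$ is not a~priori measurable), invokes a density result from Falconer to extract a Borel set $G\subset\ttF$ with $0<\H^q(G)<\infty$ and controlled upper density (this plays the role of your Frostman measure), and then reaches a contradiction via the \emph{upper} integral $\int_G^*\H^\mu_\infty(S\cap f^{-1}(y))\,d\H^q(y)$. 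You sidestep all of this by integrating the manifestly Borel function $\sum_\alpha(\diam D^k_\alpha)^\mu\,\mathbf{1}_{f(D^k_\alpha)}$ directly against an arbitrary Frostman measure and applying Chebyshev plus Borel--Cantelli; the calibration $4^{-k}$ versus $2^{-k}$ makes $\sum_k\nu(F_k)$ converge uniformly in~$\nu$, exactly as you observe. Your route is more elementary and transparent, while the paper's route essentially re-derives the Frostman mechanism through Falconer's density lemma at the cost of substantial measurability bookkeeping.
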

We start by recalling the following technical fact from
\cite{Dav56}:

\begin{lem}\label{lmes1} {\sl For any set $E\subset\R^n$, if $E=\bigcup\limits_{i=1}^\infty
E_i$ and $E_i\subset E_{i+1}$ for all $i\in\N$, then
\begin{equation}\label{mes1}
\H^\mu_\infty(E)=\lim\limits_{i\to\infty} \H^\mu_\infty(E_i).
\end{equation}}
\end{lem}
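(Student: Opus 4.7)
The direction $\sup_i \H^\mu_\infty(E_i) \le \H^\mu_\infty(E)$ is immediate from the monotonicity of Hausdorff content, so I focus on the reverse. Setting $L := \lim_i \H^\mu_\infty(E_i)$ I may assume $L < \infty$; the goal is, for each $\e > 0$, to exhibit a countable cover of $E$ whose total $\mu$-cost is at most $L + C\e$ for an absolute constant $C$. Letting $\e \searrow 0$ will then yield $\H^\mu_\infty(E) \le L$.

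The first step is to establish outer regularity of $\H^\mu_\infty$ with respect to open sets: for any $A \subset \R^n$ and $\eta > 0$ there is an open $U \supset A$ with $\H^\mu_\infty(U) \le \H^\mu_\infty(A) + \eta$. Starting from a near-optimal cover $\{S_j\}_j$ of $A$, one replaces each $S_j$ by the open dilation $U_j := \{x : \dist(x, S_j) < \delta_j\}$ with $\delta_j$ chosen small enough that $(\diam U_j)^\mu \le (\diam S_j)^\mu + \eta/2^j$; the union $U = \bigcup_j U_j$ works. Applied with $\eta = \e/2^i$ to each $E_i$, this yields open $U_i \supset E_i$ with $\H^\mu_\infty(U_i) \le L + \e/2^i$. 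To recover an increasing family, I pass to the $G_\delta$-approximants $W_i := \bigcap_{j \ge i} U_j$: since $E_i \subset E_j \subset U_j$ for every $j \ge i$, the containment $E_i \subset W_i$ holds, the sequence $(W_i)$ is increasing, and $\H^\mu_\infty(W_i) \le \H^\mu_\infty(U_i) \le L + \e/2^i$. Hence $E \subset \bigcup_i W_i$ and the problem is reduced to covering the increasing union $\bigcup_i W_i$.

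The core step is the following extraction. For each $i$ fix a countable cover $\mathcal{C}_i$ of $W_i$ realizing the content within $\e/2^i$. Since the naive union $\bigcup_i \mathcal{C}_i$ has divergent cost, I build instead a nested sequence of covers $\mathcal{D}_1 \subset \mathcal{D}_2 \subset \ldots$ inductively: set $\mathcal{D}_1 := \mathcal{C}_1$ and, at stage $i+1$, adjoin to $\mathcal{D}_i$ only those elements of $\mathcal{C}_{i+1}$ needed to cover $W_{i+1} \setminus \bigcup \mathcal{D}_i$, while possibly discarding elements of $\mathcal{D}_i$ rendered redundant. The cost bookkeeping exploits that the portion of $\mathcal{C}_{i+1}$ already handled by $\mathcal{D}_i$ is ``paid for'' by the running cover, so the incremental cost at each step is controlled by roughly $\H^\mu_\infty(W_{i+1}) - \H^\mu_\infty(W_i) + \e/2^i$; since these increments telescope and the $\e$-terms are summable, the limiting cover $\mathcal{D}_\infty := \bigcup_i \mathcal{D}_i$ covers $\bigcup_i W_i \supset E$ with cost at most $L + C\e$.

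The principal obstacle is precisely this extraction step. Because $\H^\mu_\infty$ is merely a content and not a metric outer measure, cover elements at different stages may overlap in arbitrarily complicated ways and there is no canonical ``subtraction'' of covers; in particular, passing from $W_i$ to $W_{i+1}$ one cannot simply discard the old cover in favor of the new one without forfeiting the increasing nesting needed to take a limit. The delicate combinatorial accounting that makes the inductive step rigorous is precisely the argument of Davies~\cite{Dav56}, which my plan implements in the present setting; once the cover $\mathcal{D}_\infty$ with cost $\le L + C\e$ is produced, letting $\e \searrow 0$ concludes.
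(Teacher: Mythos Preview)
The paper gives no proof of this lemma; it merely records it as a known fact from Davies~\cite{Dav56}. Your sketch is therefore already more than the paper offers, and since you too defer the decisive step to~\cite{Dav56}, the two treatments are aligned in substance.

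One point in your outline deserves a flag. The heuristic that the incremental cost at stage $i+1$ is ``roughly $\H^\mu_\infty(W_{i+1}) - \H^\mu_\infty(W_i) + \e/2^i$'' is exactly where the difficulty lives: because $\H^\mu_\infty$ is only subadditive, there is no a~priori reason the elements of $\mathcal{C}_{i+1}$ needed to cover $W_{i+1}\setminus\bigcup\mathcal{D}_i$ should have total cost controlled by a difference of contents. You correctly identify this as the obstacle and your appeal to Davies for the combinatorics is appropriate, but the telescoping statement as written is a hope rather than an argument, and a reader should not mistake it for a self-contained proof.
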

%\begin{proof}
%Let the assumptions of the Lemma be fulfilled. It is sufficient to
%consider the case when the set $E$ is bounded. Assume that the
%limit in the right hand side of~(\ref{mes1}) is finite:
%$$
%\sigma :=\lim\limits_{i\to\infty} \H^\mu_\infty(E_i)<\infty
%$$
%(if $\sigma=\infty$, there is nothing to prove). Take a sequence
%$\e_i\to0$ and sets $S_{ji}$ such that $E_i
%\subset\bigcup\limits_{j=1}^\infty S_{ji}$ and
%$$
%\H^\mu_\infty (E_i)\le \sum_{j=1}^\infty(\diam S_{ji})^\mu+\e_i
%$$
%Denote $\delta=\varlimsup\limits_{i\to\infty}\sup\limits_{j}\diam
%S_{ji}$.  Deleting from $E$ and $E_i$ the countably many sets of
%finite diameters (if necessary), we could reduce the general case
%to the situation when $\delta=0$. Then $\H^\mu(E_i)\leq \sigma$ for
%each $i\in\N$, and the assertion~(\ref{mes1}) follows immediately
%from \cite[Theorem~2 of \S1.1 and Theorem~1 of \S2.1]{EG}.
%\end{proof}

\begin{proof}[Proof of Theorem~\ref{alaFalc}]
The first assertion is evident. Let us prove the second one,
i.e.,~the implication~(\ref{mdd6}). Without loss of generality we
can assume that $f$ is compactly supported, and more specifically
that $f^{-1}(y)$ is a compact subset of the closed unit ball
$\overline{B(0,1)}$ for every $y\in\R^d\setminus\{0\}$.

Let $E\subset\R^n$ and assume that $\Phi(E)=0$. Without loss of
generality we can assume that $0\notin f(E)$ and
$$
E=\bigcap\limits_{j=1}^\infty\bigcup\limits_{i=1}^\infty D_{ij},
$$
where $D_{ij}$ are compact sets in~$\R^n$ and
\begin{equation}\label{mes2}
\sum\limits_{i=1}^\infty\bigl(\diam D_{ij}\bigr)^\mu\bigl[\diam
f(D_{ij})\bigr]^q\,\underset{\footnotesize j\to\infty}\to0.
\end{equation}
Of course, then $E$ is a Borel set. Suppose that the
assertion~(\ref{mdd6}) is false, then we can assume without loss
of generality that there exists a set $\cF\subset f(E)$ such that
\begin{equation}\label{mes3}
\H^q(\cF)>0\qquad\mbox{ and }\qquad\H_\infty^\mu\bigl(E\cap
f^{-1}(y)\bigr)\ge\frac52\quad\mbox{ for all }y\in \cF.
\end{equation}
Unfortunately, we can not assume right now that the set $\cF$ is
Borel, so we need some careful preparations.

Denote $E_{kj}=\bigcup\limits_{i=1}^k D_{ij}$, \
$E_j=\bigcup\limits_{i=1}^\infty D_{ij}$. In this notation
$E=\bigcap\limits_{j=1}^\infty E_j$. Evidently, all these sets are
Borel. By Lemma~\ref{lmes1},
\begin{equation}\label{mes5}
\H^\mu_\infty(E_j\cap
f^{-1}(y))=\lim\limits_{k\to\infty}\H^\mu_\infty(E_{kj}\cap
f^{-1}(y))\qquad \mbox{for each } y\in f(E_j).
\end{equation}
Denote further $F_{kj}=f(E_{kj})$. Fix an~arbitrary point $y$ with
the~property
$$
\H^\mu(E_{kj}\cap f^{-1}(y)\,)\leq 1.
$$
Since $E_{kj}$ is a compact set, the set $E_{kj}\cap f^{-1}(y)$ is
compact as well. Then it follows by elementary means that the sets
$E_{kj}\cap f^{-1}(z)$ lie in the $\e$-neighborhood of the set
$E_{kj}\cap f^{-1}(y)$, where $\e\searrow 0$ as $z\to y$, \,$z\in
f(E_{kj})$. Therefore, there exists $\delta=\delta(y)>0$ such that
\begin{equation}\label{mes6}
\H^\mu_\infty(E_{kj}\cap f^{-1}(z))\le2\qquad\mbox{ if
}|z-y|<\delta.
\end{equation}
Hence, there exists a relatively open set $\tF_{kj}\subset F_{kj}$
(i.e., $\tF_{kj}$ is open in the induced topology of the
set~$F_{kj}$\,) such that
\begin{equation}\label{mes7}
\{y\in \R^d: \H^\mu_\infty(E_{kj}\cap f^{-1}(y))\leq
1\}\,\subset\,\tF_{kj}\,\subset\,\{y\in \R^d:
\H^\mu_\infty(E_{kj}\cap f^{-1}(y))\leq 2\}.
\end{equation}
Since by construction $F_{kj}$ is a~compact set and $\tF_{kj}$ is
relatively open in~$F_{kj}$, we conclude that {\bf the set
$\tF_{kj}$ is Borel} (this fact plays an~important role here).
Further, since $E_{kj}\subset E_j$, we have for each $k \in \N$,
$$
\{y\in \R^d: \H^\mu_\infty(E_{j}\cap f^{-1}(y))\leq 1\}\,\subset
\,\{y\in \R^d: \H^\mu_\infty(E_{kj}\cap f^{-1}(y))\leq
1\}\,\subset\,\tF_{kj}
$$
and therefore,
\begin{equation}\label{mes8}
\{y\in \R^d: \H^\mu_\infty(E_{j}\cap f^{-1}(y))\leq
1\}\,\subset\,\tF_{j},
\end{equation}
where we denote $\tF_j=\bigcap\limits_{k=1}^\infty \tF_{kj}$. On
other hand, (\ref{mes5}) and the second inclusion in~(\ref{mes7})
imply $\tF_{j}\subset\{y\in \R^d: \H^\mu_\infty(E_{j}\cap
f^{-1}(y))\leq 2\}$, so we have
\begin{equation}\label{mes9}
\{y\in \R^d: \H^\mu_\infty(E_{j}\cap f^{-1}(y))\leq
1\}\,\subset\,\tF_{j}\,\subset\,\{y\in \R^d:
\H^\mu_\infty(E_{j}\cap f^{-1}(y))\leq 2 \}.
\end{equation}
Denote now $\ttF_j=f(E_j)\setminus\tF_j$. Then we can rewrite
(\ref{mes9}) as
\begin{equation}\label{mes10}
\{y\in \R^d: \H^\mu_\infty(E_{j}\cap
f^{-1}(y))>2\}\,\subset\,\ttF_{j}\,\subset\,\{y\in \R^d:
\H^\mu_\infty(E_{j}\cap f^{-1}(y))>1\}.
\end{equation}
Since $E\subset E_j$, we have from (\ref{mes3}) that $\cF\subset
\{y\in \R^d: \H^\mu_\infty(E_{j}\cap f^{-1}(y))>2\}\subset\ttF_j$
for all $j\in\N$, therefore
\begin{equation}\label{mes11}
\cF\subset\,\ttF,
\end{equation}
where we denote $\ttF=\bigcap\limits_{j=1}^\infty \ttF_{j}$. On
the other hand, the second inclusion in~(\ref{mes10}) yields
\begin{equation}\label{mes12}
\ttF\subset\{y\in \R^d: \H^\mu_\infty(E_{j}\cap f^{-1}(y))>1\}
\end{equation}
for each $j \in \N$. Since $\ttF$ {\bf is a Borel set} and by
(\ref{mes11}), (\ref{mes3}) the inequalities $\H^q(\ttF)\ge
\H^q(\cF)>0$ hold, by \cite[Corollary~4.12]{Falcon} there exists a
Borel set $G\subset\ttF$ and a~positive constant $b\in\R$ such
that $0<\H^q(G)<\infty$ and
\begin{equation}\label{mes13}
\H^q(G\cap B(y,r))\le b\,r^q
\end{equation}
for any ball $B(y,r)=\{z\in\R^d: |z-y|< r\}$ with the center~$y\in
G$. Of course, by (\ref{mes12})
\begin{equation}\label{mes15}
G\subset\{y\in \R^d: \H^\mu_\infty(E_{j}\cap f^{-1}(y))>1\}
\end{equation}
for all $j \in \N$. For $S\subset\R^n$ consider the set function
\begin{equation}\label{mmdd5}
\widetilde\Phi(S)=\int\limits_{G}^*\H^\mu_\infty\bigl(S\cap
f^{-1}(y)\bigr)\, \dd \H^q(y),
\end{equation}
where $\int\limits^*$ means the {\it upper} integral\footnote{We
use the notion of the upper integral since we do not know whether
or not the function $y\mapsto\H^\mu_\infty\bigl(S\cap
f^{-1}(y)\bigr)$ is measurable.}. By standard facts of Lebesgue
integration theory,  $\widetilde\Phi(\cdot)$ is a countably
subadditive set--function (see, e.g., \cite{EG}, \cite{HZ}\,).

From (\ref{mes2}) and (\ref{mes13}) it follows that
\begin{eqnarray*}
\sum\limits_{i=1}^\infty\bigl(\diam D_{ij}\bigr)^\mu\bigl[\diam
f(D_{ij})\bigr]^q &\geq& c\sum\limits_{i=1}^\infty\bigl(\diam D_{ij}\bigr)^\mu\H^q\bigl[G\cap f(D_{ij})\bigr]\\
&\geq& C\sum\limits_{i=1}^\infty\widetilde\Phi(D_{ij})\geq
C\,\widetilde\Phi(E_j).
\end{eqnarray*}
Consequently, $\widetilde\Phi (E_{j}) \to 0$ as $j \to \infty$. On
the other hand, from (\ref{mes15}) and (\ref{mmdd5}) we conclude
$$
\widetilde\Phi(E_j)\geq \int\limits_{G}^*d\H^q(y)=\H^q(G)>0,
$$
which is the desired contradiction. The proof of the
Theorem~\ref{alaFalc} is finished.
\end{proof}

\subsection{$\H^{q}$-$\sigma$-finiteness of the image}

Now again fix numbers $n,d\in\N$, $\mu\in(0,n]$, $q\in(0,d]$ and a
continuous mapping $f \colon \R^n\to\R^d$. We define the set
function by letting for a set $E \subset \R^n$,
\begin{equation}\label{finit-mdd5}
\Psi(E)=\lim\limits_{\delta\to0}\inf\limits_{\footnotesize{\begin{array}{lcr}E\subset\bigcup_\alpha
D_\alpha,\\
\diam D_\alpha\le\delta\end{array}}}\sum\limits_\al\bigl(\diam
D_\alpha\bigr)^\mu\bigl[\diam f(D_\alpha)\bigr]^q,
\end{equation}
where the infimum is taken over all countable families of compact
sets $\{D_\alpha\}_{\alpha\in \N}$ such that
$E\subset\bigcup_\alpha D_\alpha$ and $\diam D_\alpha\le\delta$
for all~$\alpha$.

This subsection is devoted to the following assertion:
\begin{ttt}\label{finit-alaFalc} {\sl The above defined $\Psi(\cdot)$ is a countably subadditive
set--function and for any $\lambda>0$ the estimate
\begin{equation}\label{finit-mdd6}
\H^q\bigl( \{ y\in\R^d: \H^\mu\bigl(E\cap f^{-1}(y)\bigr)\geq
\lambda \} \bigr)\leq 5\frac{\Psi(E)}{\lambda}
\end{equation}
holds.}
\end{ttt}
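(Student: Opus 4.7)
The plan is as follows. The countable subadditivity of $\Psi$ is a routine diagonal argument: given $E=\bigcup_j E_j$, for each $\delta,\varepsilon>0$ pick covers $\{D_{j,\alpha}\}_\alpha$ of $E_j$ with $\diam D_{j,\alpha}\le\delta$ that achieve the $\delta$-infimum for $E_j$ up to error $\varepsilon/2^j$, take their union as a $\delta$-cover of $E$, and send $\varepsilon\to 0$ and then $\delta\to 0$ (invoking monotone convergence of the $\delta$-infimum as $\delta\searrow 0$). The rest of the proof concentrates on the Chebyshev-type estimate.

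The basic strategy is to transfer the fiber hypothesis $\H^\mu(E\cap f^{-1}(y))\ge\lambda$ into a weighted covering bound that can be integrated against a Frostman-type measure on the target side, mirroring the approach of Theorem~\ref{alaFalc}. Since $\H^\mu=\sup_{\delta>0}\H^\mu_\delta$, I first approximate from below by fixing $\lambda'\in(0,\lambda)$ and setting
$$
G^k_{\lambda'}=\bigl\{y\in\R^d:\H^\mu_{1/k}(E\cap f^{-1}(y))\ge\lambda'\bigr\},
$$
which are increasing in $k$ with $\bigcup_k G^k_{\lambda'}\supseteq\{y:\H^\mu(E\cap f^{-1}(y))\ge\lambda\}$. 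By continuity from below of outer Hausdorff measure (working with compact subsets if measurability is unclear, as in the proof of Theorem~\ref{alaFalc}), it suffices to bound $\H^q(G^k_{\lambda'})$ for each $k$ and then let $k\to\infty$, $\lambda'\to\lambda$. For fixed $k$ and a target scale $\eta>0$, use uniform continuity of $f$ on a bounded neighborhood of $E$ (which we may assume bounded) to pick $\delta\le 1/k$ so small that $\diam f(D)\le\eta/10$ whenever $D\subset E$ has $\diam D\le\delta$, and choose a compact $\delta$-cover $\{D_\alpha\}$ of $E$ with $\sum_\alpha(\diam D_\alpha)^\mu(\diam f(D_\alpha))^q\le\Psi(E)+\varepsilon$. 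For $y\in G^k_{\lambda'}$ the sets $\{D_\alpha\cap f^{-1}(y):y\in f(D_\alpha)\}$ cover $E\cap f^{-1}(y)$ with diameters $\le 1/k$, yielding the key fiber inequality
\begin{equation}\label{fiber}
\lambda'\le\H^\mu_{1/k}(E\cap f^{-1}(y))\le\sum_{\alpha:\,y\in f(D_\alpha)}(\diam D_\alpha)^\mu,\qquad y\in G^k_{\lambda'}.
\end{equation}

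To extract an $\H^q_\eta$-bound on $G^k_{\lambda'}$ from \eqref{fiber} I would combine the Vitali $5r$-covering lemma with a Frostman argument. Choose $y_\alpha\in f(D_\alpha)$ and set $B_\alpha=\overline B(y_\alpha,\diam f(D_\alpha))$, so that $f(D_\alpha)\subseteq B_\alpha$ and $\diam 5B_\alpha\le\eta$. The $5r$-lemma extracts a pairwise disjoint subfamily $\{B_{\alpha_i}\}_{i\in I}$ with $\bigcup_\alpha B_\alpha\subseteq\bigcup_i 5B_{\alpha_i}$, furnishing an $\eta$-cover of $G^k_{\lambda'}$; the inflation from $B_\alpha$ to $5B_\alpha$ is the origin of the factor $5$ in the final bound. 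Next, for any compact $K\subseteq G^k_{\lambda'}$ with $\H^q_\eta(K)>0$, a restricted Frostman construction (building $\nu$ dyadically down only to scale $\eta$) yields a Borel measure $\nu$ on $K$ with $\nu(B(y,r))\le r^q$ for $0<r\le\eta$ and $\nu(K)$ comparable to $\H^q_\eta(K)$. Integrating \eqref{fiber} against $\nu$, and using $\nu(f(D_\alpha))\le\nu(B_\alpha)\le(\diam f(D_\alpha))^q$ (valid because $\diam f(D_\alpha)\le\eta$), gives
$$
\lambda'\,\nu(K)\le\sum_\alpha(\diam D_\alpha)^\mu\,\nu(f(D_\alpha))\le\sum_\alpha(\diam D_\alpha)^\mu(\diam f(D_\alpha))^q\le\Psi(E)+\varepsilon.
$$
Taking the supremum over compact $K$ bounds $\H^q_\eta(G^k_{\lambda'})$; then sending $\eta\to 0$, $k\to\infty$, $\lambda'\to\lambda$, $\varepsilon\to 0$ concludes the proof.

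The main obstacle I foresee is the precise tracking of the constant $5$: the combination of the $5r$-covering inflation and the (necessarily dimensional) Frostman comparison must conspire to produce the clean universal bound asserted, which likely requires a carefully sharpened normalization of the restricted Frostman construction to avoid spurious dimensional factors. A secondary technical point is the absence of Borel measurability of $y\mapsto\H^\mu_{1/k}(E\cap f^{-1}(y))$, which forces one to work throughout with upper integrals and compact subsets of $G^k_{\lambda'}$, exactly as in the proof of Theorem~\ref{alaFalc}.
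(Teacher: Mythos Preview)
Your overall strategy---pass to a fiber inequality of the form \eqref{fiber} and integrate it against a measure on the target obeying $\nu(B(y,r))\le r^q$---is sound and would yield a weak-type bound $\H^q(\{y:\H^\mu(E\cap f^{-1}(y))\ge\lambda\})\le C\,\Psi(E)/\lambda$ for \emph{some} constant $C$, which is all that the applications in the paper require. However, two aspects of your write-up do not match the paper and one of them is a genuine (if minor) gap.

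\textbf{Different route, different constant.} The paper does \emph{not} use a Frostman measure. Instead it reuses the Borel-set machinery of Theorem~\ref{alaFalc}: starting from $\cF=\{y:\H^\mu_\infty(E\cap f^{-1}(y))\ge\tfrac52\lambda\}$ it manufactures a Borel set $\ttF\supset\cF$ with $\ttF\subset\{y:\H^\mu_\infty(E_j\cap f^{-1}(y))>\lambda\}$ for every $j$ (this step costs the factor $\tfrac52$), then takes any Borel $G\subset\ttF$ with $0<\H^q(G)<\infty$ and invokes the upper-density theorem for Hausdorff measure (Evans--Gariepy, \S2.3) to obtain subsets $G_l\subset G$ on which $\H^q(G\cap B(x,r))\le 2r^q$ for all small $r$ and with $\H^q(G_l)\nearrow\H^q(G)$. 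Integrating the fiber inequality against $\H^q\!\restriction_{G_l}$ (rather than a Frostman $\nu$) then gives $\H^q(G)\le 2\sigma/\lambda$, and the substitution $\lambda\mapsto\tfrac25\lambda$ yields the stated factor $5$. Both factors $\tfrac52$ and $2$ are dimension-free, which is why the final constant is universal.

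\textbf{Why your constant tracking breaks.} Your own suspicion is correct: the Frostman lemma in $\R^d$ produces $\nu(K)\ge c_d\,\H^q_\infty(K)$ with a constant $c_d$ depending on the ambient dimension; a ``restricted'' (scale-$\eta$) version does not remove this dependence. Consequently your argument, as written, yields $\H^q(\{\dots\})\le C_d\,\Psi(E)/\lambda$ rather than $5\,\Psi(E)/\lambda$. The Vitali $5r$-lemma you insert is a red herring---you never actually use the disjoint subfamily in the subsequent integration, and in the paper the factor $5$ has nothing to do with Vitali; it is $\tfrac52\times 2$ as explained above. If you want the sharp constant along your lines, replace the Frostman step by the density-theorem step: pick any Borel $G$ inside your bad set with $0<\H^q(G)<\infty$ and integrate against $\H^q\!\restriction_{G_l}$ instead of $\nu$.
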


\begin{proof}
The first assertion is evident and we focus on proving the
estimate~(\ref{finit-mdd6}). Without loss of generality we can
assume that $f^{-1}(y)$ is a compact subset of the closed unit
ball $\overline{B(0,1)}$ for every $y\in\R^d\setminus\{0\}$. Let
$E\subset\R^n$ and
$$
\Psi(E)=\sigma<\infty.
$$
Without loss of generality assume also that $0\notin f(E)$ and
$$
E=\bigcap\limits_{j=1}^\infty\bigcup\limits_{i=1}^\infty D_{ij},
$$
where $D_{ij}$ are compact sets in~$\R^n$ satisfying
\begin{equation}\label{finit-mes2}
\sum\limits_{i=1}^\infty\bigl(\diam D_{ij}\bigr)^\mu\bigl[\diam
f(D_{ij})\bigr]^q\,\underset{\footnotesize j\to\infty}\to\sigma,
\end{equation}
and
\begin{equation}\label{finit-mes-im}
\diam D_{ij}+\diam f(D_{ij})\leq \frac1j.
\end{equation}
Of course, $E$ is a Borel set. Fix $\lambda>0$ and take a set
$\cF\subset f(E)$ such that
\begin{equation}\label{finit-mes3}
\H_\infty^\mu\bigl(E\cap
f^{-1}(y)\bigr)\ge\frac52\lambda\quad\mbox{ for all }y\in \cF.
\end{equation}
Further we assume that
\begin{equation}\label{finit-mes3'}
\H^q(\cF)>0
\end{equation}
since if $\H^q(\cF)=0$, there is nothing to prove. Denote
$E_j=\bigcup\limits_{i=1}^\infty D_{ij}$. Repeating almost
verbatim the arguments from the proof of the previous
Theorem~\ref{alaFalc}, we can construct a Borel
set~$\ttF\subset\R^d$ such that
\begin{equation}\label{finit-mes11}
\cF\subset\,\ttF \subset\{y\in \R^d: \H^\mu_\infty(E_{j}\cap
f^{-1}(y))>\lambda\}
\end{equation}
for each $j \in \N$. Since $\ttF$ {\bf is a Borel set} and since,
by (\ref{finit-mes11}) and (\ref{finit-mes3'}), the inequalities
$\H^q(\ttF)\geq \H^q(\cF)>0$ hold, we deduce by
\cite[Theorem~4.10]{Falcon} the existence of a Borel set
$G\subset\ttF$ such that $0<\H^q(G)<\infty$. Put
\begin{equation}\label{finit-mes13}
G_l=\biggl\{x\in G\ :\ \H^{q}(G\cap B(x,r))\le 2r^q\qquad\forall
r\in(0,1/l)\biggr\}.
\end{equation}
Then by construction all the sets $G_l$ are Borel, $G_l\subset
G_{l+1}$, moreover, by \cite[Theorem~2~of~\S2.3]{EG} we have
$$
\H^q\biggl[G\setminus\biggl(\bigcup\limits_{l=1}^\infty
G_l\biggr)\biggr]=0
$$
and consequently,
 \begin{equation}\label{finit-mes17}
\H^q(G)=\lim\limits_{l\to\infty}\H^q(G_l).
\end{equation}
For $S\subset\R^n$ consider the set function
\begin{equation}\label{finit-mmdd5}
\Psi_l(S)=\int\limits_{G_l}^*\H^\mu_\infty\bigl(S\cap
f^{-1}(y)\bigr)\, \dd \H^q(y),
\end{equation}
where $\int\limits^*$ means the {\it upper} integral\footnote{We
use the notion of upper integral as it is unclear whether the
function $y\mapsto\H^\mu_\infty\bigl(S\cap f^{-1}(y)\bigr)$ is
measurable.}. By routine arguments of Lebesgue integration theory
it follows that  $\Psi(\cdot)$ is a countably subadditive
set-function (see, e.g., \cite{EG}, \cite{HZ}\,).

From (\ref{finit-mes2}), (\ref{finit-mes-im}) and
(\ref{finit-mes13}) it follows for $j> l$ that
\begin{eqnarray}\label{finit-mes17'}
\sum\limits_{i=1}^\infty\bigl(\diam D_{ij}\bigr)^\mu\bigl[\diam
f(D_{ij})\bigr]^q &\geq&
\frac12\sum\limits_{i=1}^\infty\bigl(\diam
D_{ij}\bigr)^\mu\,\H^q\bigl[G_l\cap
f(D_{ij})\bigr]\nonumber\\
&\geq& \frac12\sum\limits_{i=1}^\infty\Psi_l(D_{ij})\geq
\frac12\,\Psi_l(E_j).
\end{eqnarray}
On the other hand, the second inclusion in~(\ref{finit-mes11})
implies
\begin{equation}\label{finit-mes18}
\Psi_l(E_j)\geq
\lambda\int\limits_{G_l}^*d\H^q(y)=\lambda\H^q(G_l).
\end{equation}
From (\ref{finit-mes17'}), (\ref{finit-mes18}), (\ref{finit-mes2})
we infer
\begin{equation}\label{finit-mes19}
\H^q(G_l)\le \frac{2\sigma}{\lambda},
\end{equation}
and therefore, by (\ref{finit-mes17}),
\begin{equation}\label{finit-mes20}
\H^q(G)\le \frac{2\sigma}{\lambda}.
\end{equation}
Since this estimate is true for {\bf any} Borel set $G\subset
\ttF$ with $\H^q(G)<\infty$, and since $\ttF$ is Borel as well, we
infer from \cite[Theorem~4.10]{Falcon} that
\begin{equation}\label{finit-mes21}
\H^q(\ttF)\le \frac{2\sigma}{\lambda}.
\end{equation}
In particular, by the inclusion $\cF\subset\ttF$, this implies
\begin{equation}
\H^q(\cF)\le \frac{2\sigma}{\lambda},
\end{equation}
or in other words,
\begin{equation}
\H^q\bigl(y\in\R^d: \H^\mu\bigl(E\cap f^{-1}(y)\bigr)\geq
\frac52\lambda\bigr)\le2\frac{\Psi(E)}{\lambda}.
\end{equation}
The proof of Theorem~\ref{finit-alaFalc} is complete.
\end{proof}

\noindent Sobolev Institute of Mathematics, Acad.~Koptyuga pr., 4,
and Novosibirsk State University, Pirogova Str. 2, 630090
Novosibirsk, Russia\\
e-mail: \textit{korob@math.nsc.ru}

\bigskip

\noindent
Mathematical Institute, Andrew Wiles Building,
University of Oxford, Oxford OX2 6GG, England\\
e-mail: \textit{kristens@maths.ox.ac.uk}

\bigskip

\noindent
 Department of Mathematics, University of Pittsburgh, 301
Thackeray Hall, Pittsburgh, PA 15260, USA  \\
e-mail: \textit{hajlasz@pitt.edu}

\end{document}